\newcommand{\remind}[1]{{\bf ** #1 **}}
\def\ge{\geqslant}
\def\le{\leqslant}
\def\a{\alpha}
\def\b{\beta}
\def\g{\gamma}
\def\G{\Gamma}
\def\d{\delta}
\def\e{\epsilon}
\def\th{\theta}
\def\l{\lambda}
\def \ud{\underline}
\def\pr{\mathrm{pr}}
\def\af{\mathrm{af}}
\def\<{\langle}
\def\>{\rangle}
\newcommand{\bG}{\mathbf G}
\newcommand{\BF}{\ensuremath{\mathbb {F}}\xspace}
\newcommand{{\BG}}{\ensuremath{\mathbb {G}}\xspace}
\newcommand{{\BK}}{\ensuremath{\mathbb {K}}\xspace}
\newcommand{\BQ}{\ensuremath{\mathbb {Q}}\xspace}
\newcommand{\BR}{\ensuremath{\mathbb {R}}\xspace}
\newcommand{\BS}{\ensuremath{\mathbb {S}}\xspace}
\newcommand{\BZ}{\ensuremath{\mathbb {Z}}\xspace}
\newcommand{\CA}{\ensuremath{\mathcal {A}}\xspace}
\newcommand{\CC}{\ensuremath{\mathcal {C}}\xspace}
\newcommand{\CI}{\ensuremath{\mathcal {I}}\xspace}
\newcommand{\CK}{\ensuremath{\mathcal {K}}\xspace}
\newcommand{\CS}{\ensuremath{\mathcal {S}}\xspace}
\newcommand{\CW}{\ensuremath{\mathcal {W}}\xspace}
\DeclareMathOperator{\Adm}{Adm}
\newcommand{\wt}{\mathrm{wt}}
\def\tW{\tilde W}
\DeclareMathOperator{\supp}{supp}
\newtheorem{theorem}{Theorem}
\newtheorem{proposition}[theorem]{Proposition}
\newtheorem{lemma}[theorem]{Lemma}
\newtheorem{corollary}[theorem]{Corollary}
\theoremstyle{definition}
\newtheorem{definition}[theorem]{Definition}
\newtheorem{example}[theorem]{Example}
\newtheorem*{example*}{Example}
\newtheorem{thmintro}{Theorem}
\newtheorem{propintro}[thmintro]{Proposition}
\newtheorem{remark}[theorem]{Remark}
\newtheorem*{function*}{Function}
\numberwithin{equation}{section}
\numberwithin{theorem}{section}
\renewcommand{\to}{%
   \ifbool{@display}{\longrightarrow}{\rightarrow}%
   }
\let\shortmapsto\mapsto
\renewcommand{\mapsto}{%
   \ifbool{@display}{\longmapsto}{\shortmapsto}%
   }
\newlength{\olen}
\newlength{\ulen}
\newlength{\xlen}
\newcommand{\xra}[2][]{%
   \ifbool{@display}%
      {\settowidth{\olen}{$\overset{#2}{\longrightarrow}$}%
       \settowidth{\ulen}{$\underset{#1}{\longrightarrow}$}%
       \settowidth{\xlen}{$\xrightarrow[#1]{#2}$}%
       \ifdimgreater{\olen}{\xlen}%
          {\underset{#1}{\overset{#2}{\longrightarrow}}}%
          {\ifdimgreater{\ulen}{\xlen}%
             {\underset{#1}{\overset{#2}{\longrightarrow}}}
             {\xrightarrow[#1]{#2}}}}%
      {\xrightarrow[#1]{#2}}
   }
\newcommand{\xyra}[2][]{%
   \settowidth{\xlen}{$\xrightarrow[#1]{#2}$}%
   \ifbool{@display}%
      {\settowidth{\olen}{$\overset{#2}{\longrightarrow}$}%
       \settowidth{\ulen}{$\underset{#1}{\longrightarrow}$}%
       \ifdimgreater{\olen}{\xlen}%
          {\mathrel{\xymatrix@M=.12ex@C=3.2ex{\ar[r]^-{#2}_-{#1} &}}}%
          {\ifdimgreater{\ulen}{\xlen}%
             {\mathrel{\xymatrix@M=.12ex@C=3.2ex{\ar[r]^-{#2}_-{#1} &}}}
             {\mathrel{\xymatrix@M=.12ex@C=\the\xlen{\ar[r]^-{#2}_-{#1} &}}}}}%
      {\mathrel{\xymatrix@M=.12ex@C=\the\xlen{\ar[r]^-{#2}_-{#1} &}}}%
   }
\newcommand{\xla}[2][]{%
   \ifbool{@display}%
      {\settowidth{\olen}{$\overset{#2}{\longleftarrow}$}%
       \settowidth{\ulen}{$\underset{#1}{\longleftarrow}$}%
       \settowidth{\xlen}{$\xleftarrow[#1]{#2}$}%
       \ifdimgreater{\olen}{\xlen}%
          {\underset{#1}{\overset{#2}{\longleftarrow}}}%
          {\ifdimgreater{\ulen}{\xlen}%
             {\underset{#1}{\overset{#2}{\longleftarrow}}}
             {\xleftarrow[#1]{#2}}}}%
      {\xleftarrow[#1]{#2}}
   }
\newcommand{\isoarrow}{%
   \ifbool{@display}{\overset{\sim}{\longrightarrow}}{\xrightarrow\sim}%
   }
\begin{document}

\title[]{Irreducibility of Local Models}

\author[Xuhua He]{Xuhua He}
\address{Department of Mathematics and New Cornerstone Science Laboratory, The University of Hong Kong, Pokfulam, Hong Kong, Hong Kong SAR, China}
\email{xuhuahe@hku.hk}

\author[Qingchao Yu]{Qingchao Yu}
\address{Institute for Advanced Study, Shenzhen University, Nanshan District, Shenzhen, Guangdong, China}
\email{qingchao\_yu@outlook.com}

\thanks{}

\keywords{Local models, Schubert varieties}
\subjclass[2010]{11G25, 20G25}


\begin{abstract}
In this paper, we consider the geometric special fibers of local models of Shimura varieties and of moduli of $\bG$-Shtukas with parahoric level structure. We investigate two problems with respect to the irreducibility of local models. First, we classify the cases where the local models are irreducible. Next, we show that the fibers of the level-changing map between the geometric special fiber of local models with different parahoric levels are always isomorphic to single (i.e., irreducible) Schubert varieties in the partial flag variety.
\end{abstract}

\maketitle


\section*{Introduction}

\subsection{Background}
Local models serve as projective flat schemes that capture the singularities of integral models of Shimura varieties and moduli of $\bG$-Shtukas with parahoric level structures. First introduced by Rapoport and Zink \cite{RZ96} for PEL-type Shimura varieties, these models were defined using moduli spaces of self-dual lattice chains in specific skew-Hermitian vector spaces. More general constructions for Shimura varieties and moduli of $\bG$-Shtukas, based on group-theoretic approaches, were developed by Zhu \cite{Zhu}, Pappas and Zhu \cite{PZ13} for tamely ramified groups, and Fakhruddin, Haines, Lourenço, and Richarz \cite{FHLR} for wildly ramified groups.

The generic fiber of the local model is a single affine Schubert variety in the affine Grassmannian. The special fiber is more complicated and is a certain union of the affine Schubert varieties in the partial affine flag varieties. It is defined as follows. 

Let $k = \BF_q$ be a finite field, and let $L = \overline{k}((t))$. Let $\mathbf{G}$ be a connected reductive group over $L$, and let $\breve{G} = \mathbf{G}(L)$. Let $\mu$ be a dominant cocharacter of $\mathbf{G}$. Let $K$ be a spherical subset of affine simple reflections, and let $\breve{\CK}$ be the corresponding standard parahoric subgroup of $\breve{G}$. Define
\begin{align*}
\CA_{K}(\mathbf{G}, \mu) = \breve{\CK} \Adm(\mu) \breve{\CK} / \breve{\CK} \subset \breve{G}/\breve{\CK}.
\end{align*}
Here $\Adm(\mu)$ denotes the \emph{$\mu$-admissible subset} of the Iwahori-Weyl group of $\breve{G}$. See Section \ref{sec:1.2} for the precise definition. Note that $\CA_{K}(\mathbf{G}, \mu)$ is a closed finite-dimensional subvariety of the partial affine flag variety $\breve{G}/\breve{\CK}$. The geometric special fibers of local models are known to be isomorphic to some of these varieties $\CA_{K}(\mathbf{G}, \mu)$.

\subsection{Main Results} In this paper, we establish the following two results on the irreducibility of $\CA_{K}(\bG,\mu)$.

We first present a result on the irreducible components of $\CA_{K}(\bG,\mu)$.

\begin{thmintro}[Theorem \ref{thm:irr}] There is a natural bijection between the irreducible components of $\CA_{K}(\bG,\mu)$ and the double quotient $W_{\pr(K)}\backslash W_0/W_{I(\mu)}$ of the relative Weyl group.

Moreover, if $\mathbf{G}$ is quasi-simple over $L$ and $\mu$ is non-central, then $\CA_{K}(\bG,\mu)$ is irreducible if and only if $W_0 = W_{\pr(K)} W_{\text{short}}$ and $W_{\text{short}}$ stabilizes $\mu$. 
\end{thmintro}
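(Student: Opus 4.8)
The plan is to use the first part of the theorem to translate the statement into a question about finite Coxeter groups, and then to unravel that question. By the first assertion, $\CA_K(\bG,\mu)$ is irreducible exactly when $W_{\pr(K)}\backslash W_0/W_{I(\mu)}$ is a single point, i.e.\ exactly when $W_0=W_{\pr(K)}\,W_{I(\mu)}$ as a product of subsets of $W_0$. Since $\bG$ is quasi-simple over $L$ the affine Dynkin diagram is connected, so $W_0$ is an irreducible finite Weyl group; and since $\mu$ is non-central we may take it dominant, so that $W_{I(\mu)}=\mathrm{Stab}_{W_0}(\mu)$ is a \emph{proper} standard parabolic subgroup of $W_0$. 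As ``$W_{\text{short}}$ stabilizes $\mu$'' is the same as $W_{\text{short}}\subseteq W_{I(\mu)}$, it thus remains to prove, for the reflection subgroup $W_{\pr(K)}=\pr(W_K)\subseteq W_0$ attached to the spherical set $K$ and for any proper standard parabolic $W_{I(\mu)}$, the equivalence
\[
W_0=W_{\pr(K)}\,W_{I(\mu)}\ \Longleftrightarrow\ \bigl(W_0=W_{\pr(K)}\,W_{\text{short}}\ \text{and}\ W_{\text{short}}\subseteq W_{I(\mu)}\bigr).
\]

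The implication ``$\Leftarrow$'' is formal: if $W_{\text{short}}\subseteq W_{I(\mu)}$ then $W_0=W_{\pr(K)}W_{\text{short}}\subseteq W_{\pr(K)}W_{I(\mu)}\subseteq W_0$, forcing equality throughout, hence irreducibility.

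For ``$\Rightarrow$'' the essential claim is that $W_{\text{short}}$ is a \emph{universal} parabolic complement to $W_{\pr(K)}$: whenever a proper standard parabolic $W_J$ satisfies $W_0=W_{\pr(K)}W_J$, one automatically has $W_{\text{short}}\subseteq W_J$ (and in particular $W_0=W_{\pr(K)}W_{\text{short}}$, by taking $W_J=W_{I(\mu)}$). I would first dispose of the case where $\breve\CK$ is a special maximal parahoric: there $W_{\pr(K)}=W_0$ and $W_{\text{short}}$ is trivial, so both sides hold automatically, and we recover the classical fact that special-level local models (affine Grassmannian Schubert varieties) are always irreducible. For $W_{\pr(K)}\subsetneq W_0$ one proceeds through the Borel--de Siebenthal description of the reflection subgroups arising as $\pr(W_K)$ (full-rank ones when $K$ is a vertex, their parabolics otherwise): for each irreducible type of $\Phi$ and each such $W_{\pr(K)}$, list the standard parabolics $W_J$ with $W_0=W_{\pr(K)}W_J$ — equivalently, with $W_J$ transitive on $W_{\pr(K)}\backslash W_0$ — using the counting identity $|W_{\pr(K)}|\,|W_J|=|W_0|\,|W_{\pr(K)}\cap W_J|$, and check that the minimal such $W_J$ equals $W_{\text{short}}$ and is contained in every other such $W_J$.

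The main obstacle is precisely this last verification in the non-simply-laced (and non-reduced) types $B_n$, $C_n$, $F_4$, $G_2$, $BC_n$. There $W_{\pr(K)}$ can be a proper subgroup that still contains the long-root subgroup, and one must pin down exactly which short reflections it fails to reach and show that they generate a parabolic lying inside every parabolic complement — this is what accounts for the name ``$W_{\text{short}}$'' and for the necessity of $\mu$ being fixed by it. (For instance, for $\mathrm{Sp}_{2n}$ and $K=\tilde{\mathbb S}\setminus\{s_i\}$ with $0<i<n$ one finds $W_{\pr(K)}\cong W(C_i\times C_{n-i})$, the condition $W_0=W_{\pr(K)}W_J$ forces $W_J\supseteq W_{\text{short}}=\langle s_1,\dots,s_{n-1}\rangle\cong S_n$, and $W_{\text{short}}$ stabilizes $\mu$ only for $\mu=(a,\dots,a)$ — matching the explicit computation that $\breve\CK\,t^{(a,\dots,a)}\,\breve\CK$ already contains all of $W_0\mu$.) In the simply-laced types the analysis collapses, $W_{\text{short}}=\{1\}$, and the criterion reduces to: $\CA_K(\bG,\mu)$ is irreducible for non-central $\mu$ if and only if $\breve\CK$ is special.
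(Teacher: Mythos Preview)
Your reduction is the same as the paper's: irreducibility is equivalent to $W_0=W_{\pr(K)}W_{I(\mu)}$, and one then classifies which proper standard parabolics $W_J$ satisfy $W_0=W_{\pr(K)}W_J$. But there is a genuine error in your treatment of the special case. You assert that when $\breve\CK$ is special maximal, ``$W_{\text{short}}$ is trivial.'' This is false: $W_{\text{short}}$ depends only on the root system $\Sigma$, not on $K$; in type $B_n$, for instance, $W_{\text{short}}=\langle s_n\rangle$ no matter which $K$ is chosen. Consequently your ``essential claim'' that every proper $W_J$ with $W_0=W_{\pr(K)}W_J$ contains $W_{\text{short}}$ fails when $K$ is special, since then $W_{\pr(K)}=W_0$ and \emph{every} $J$ works. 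In fact the precise form of the theorem in the body of the paper (Theorem~\ref{thm:irr}) lists two alternatives: (1)~$K$ is maximal special, \emph{or} (2)~$W_0=W_{\pr(K)}W_{\text{short}}$ and $W_{\text{short}}$ stabilizes $\mu$. The introductory formulation collapses these, and your mistaken claim is an (incorrect) attempt to absorb (1) into (2). A correct argument handles the special case separately: $W_{\pr(K)}=W_0$ forces the double quotient to be a point with no condition on $\mu$.

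For non-special $K$ your outline is plausible but differs from the paper's mechanism. The paper does not use Borel--de Siebenthal or the counting identity (which, incidentally, is necessary but not sufficient for $W_0=W_{\pr(K)}W_J$); instead it proves a clean reduction (Lemma~\ref{lem:supp}): $W_0=W_{\pr(K)}W_J$ if and only if $\supp\bigl({}^{\pr(K)}W_0\bigr)\subseteq J$, where ${}^{\pr(K)}W_0$ is the set of minimal coset representatives for the reflection subgroup $W_{\pr(K)}$. This converts the question into computing a single subset of $\BS_0$, which is then done type by type via explicit Weyl-chamber geometry, with the outcome that for non-special $K$ the support is either all of $\BS_0$ or exactly the set of short simple roots. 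Note also that your parenthetical ``in particular $W_0=W_{\pr(K)}W_{\text{short}}$, by taking $W_J=W_{I(\mu)}$'' is not a valid deduction from the claim as you stated it: knowing $W_{\text{short}}\subseteq W_J$ does not by itself imply that $W_{\text{short}}$ is a complement; that must be established separately in the classification.
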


We refer to \S\ref{sec:1.2} for the notation used here. In Proposition \ref{prop:class}, we also obtain an explicit classification of the cases where $\CA_{K}(\bG,\mu)$ is irreducible. In particular, if $\CA_{K}(\bG,\mu)$ is irreducible, then $\breve{\CK}$ is a maximal parahoric subgroup, and if moreover $\breve{K}$ is not special, then the Dynkin diagram is not of simply-laced type.

Our second main result concerns the fibers of the level-changing maps of the local models. Let $K_1\subseteq K_2$ be two spherical subsets. The natural projection $\breve G/\breve \CK_1\longrightarrow \breve G / \breve \CK_2$ induces the level-changing map
$\pi =\pi_{K_1,K_2} : \CA_{K_1}(\bG,\mu) \longrightarrow \CA_{K_2}(\bG,\mu)$.

\begin{thmintro}[Theorem \ref{thm:main}]\label{thm:01}
For any spherical subsets $K_1 \subseteq K_2$, the fibers of the map $\pi_{K_1,K_2}: \CA_{K_1}(\bG,\mu) \longrightarrow \CA_{K_2}(\bG,\mu)$ are single (i.e. irreducible) Schubert varieties in the partial flag variety.
\end{thmintro}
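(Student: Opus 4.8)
The plan is to fix a point $\bar{x} \in \CA_{K_2}(\bG,\mu)$ and understand its fiber under $\pi_{K_1,K_2}$ directly in terms of the combinatorics of the Iwahori-Weyl group. Without loss of generality one may reduce to the case where $K_1$ is the empty set, i.e.\ $\breve\CK_1 = \breve\CI$ is the Iwahori subgroup, since the general fiber is a closed subvariety of such an Iwahori-level fiber cut out by the further projection, and a closed subvariety of a Schubert variety in $\breve G/\breve\CK_2$ that is itself the image of a Schubert variety will again be a Schubert variety by the standard theory; more cleanly, one factors $\pi_{K_1,K_2}$ and treats each elementary step, or simply argues with $\breve\CK_1$ arbitrary throughout. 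The point $\bar x$ lies in a unique $\breve\CK_2$-orbit on $\breve G/\breve\CK_2$, and after translating by an element of $\breve\CK_2$ we may assume $\bar x$ is the base point of the Schubert cell attached to some $w \in {}^{K_2}\tW$ (or rather the corresponding minimal-length representative), where $\tW$ is the Iwahori-Weyl group. The fiber $\pi_{K_1,K_2}^{-1}(\bar x)$ is then identified set-theoretically with
\[
\{\, v\breve\CK_1 \in \breve G/\breve\CK_1 \ :\ v \in \breve\CK_2 w\breve\CK_2,\ v\breve\CK_1 \in \CA_{K_1}(\bG,\mu)\,\},
\]
intersected with the preimage of the single point $\bar x$; concretely this is a union of $\breve\CK_1$-orbits (Schubert cells) in the partial flag variety $\breve\CK_2/\breve\CK_1$-fiber bundle, indexed by a subset $S_{\bar x} \subseteq W_{K_2}/W_{K_1}$ of the relative Weyl-group quotient.

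The key step is to show that this index set $S_{\bar x}$ is a lower set, i.e.\ closed under going down in the Bruhat order on $W_{K_2}/W_{K_1}$, and that it has a unique maximal element. Closure downward should follow from the fact that $\Adm(\mu)$ is a union of Bruhat intervals $[1,x]$ — more precisely $\Adm(\mu) = \{w \in \tW : w \le t^{\lambda}\ \text{some}\ \lambda \in W_0\mu\}$ is closed under the Bruhat order, and admissibility of $v$ together with the closure of $\CA_{K_1}(\bG,\mu)$ under the Iwahori-Bruhat order (it is a union of Iwahori-Schubert varieties) forces: if $u \le v$ in the $\breve\CK_2$-double coset and $v$ represents a point of the fiber, then so does $u$. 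For the existence of a unique maximum I would use that the $\breve\CK_2$-double coset $\breve\CK_2 w\breve\CK_2$ has a well-defined maximal element $w_{K_2} \cdot w \cdot w_{K_2}$-type representative in the Iwahori-Weyl group, and that admissibility is preserved under passing to this maximal element within a fixed $\breve\CK_2$-coset precisely because $\Adm(\mu)$ is stable under the operation $K_2$-left-multiplication followed by taking the longest element — this is where I expect to invoke a lemma of the same flavor as those underpinning Theorem~\ref{thm:irr}, namely that $\breve\CK_2 \Adm(\mu)\breve\CK_2$ "saturates" each $\breve\CK_2$-double coset it meets. Granting that, $S_{\bar x}$ is exactly the interval $[1, m]$ for $m$ the image of this maximal element in $W_{K_2}/W_{K_1}$, and a union of Schubert cells forming a Bruhat interval closed downward is by definition a single Schubert variety in the partial flag variety $\breve\CK_2/\breve\CK_1$ (a fiber of $\breve G/\breve\CK_1 \to \breve G/\breve\CK_2$). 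One must also check the scheme structure is reduced and matches, which follows since $\CA_{K_1}(\bG,\mu)$ is reduced by definition and the projection is $\breve\CK_2$-equivariant, hence the fiber inherits the reduced structure of a Schubert variety.

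The main obstacle is the "saturation" claim: showing that whenever $\breve\CK_2 \Adm(\mu)\breve\CK_2$ meets a $\breve\CK_2$-double coset $\breve\CK_2 g \breve\CK_2$, the intersection is all of it, equivalently that the top Iwahori-Bruhat element of that double coset is itself $\mu$-admissible (not merely $\le$ some translate that is already killed on passing to $\breve\CK_2$-quotient). This is subtle because $\Adm(\mu)$ is not in general stable under $w \mapsto w_{K_2}w$; what saves us is that we only need stability of the larger set $\breve\CK_2\Adm(\mu)\breve\CK_2$, and here I would lean on the combinatorial description of $\Adm(\mu)$ via the $\mu$-permissible set or via the characterization $\Adm(\mu) = \bigcup_{\lambda} [1, t^\lambda]$ together with the fact that for the longest element $w_{K_2}$ of the finite parabolic, $w_{K_2} t^\lambda w_{K_2} = t^{w_{K_2}\lambda}$ is again a translation by a Weyl conjugate of $\mu$, so the maximal element of $\breve\CK_2 t^\lambda \breve\CK_2$ is $\le t^{\lambda'}$ for a suitable $\lambda' \in W_0\mu$ once one accounts for the finite parts correctly. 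Making this last comparison precise — tracking how the finite Weyl factors in the Iwahori-Weyl group interact with the translation lattice under the $\breve\CK_2$-double coset decomposition — is the technical heart, and it is exactly the kind of bookkeeping already developed for the proof of Theorem~\ref{thm:irr}, so I would organize the argument to reuse that machinery.
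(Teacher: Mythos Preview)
Your reduction to a combinatorial unique-maximum statement is on the right track, and matches the paper's strategy through equation (\ref{eq:2.2}): the fiber over a point represented by $w\in {}^{K_1}\tW^{K_2}\cap\Adm(\mu)$ is
\[
\bigcup_{\substack{x\in W_{K_2}\\ wx\in\Adm(\mu)}} \breve\CI x \breve\CK_1/\breve\CK_1,
\]
so everything hinges on showing that $\{x\in W_{K_2}: wx\in\Adm(\mu)\}$ has a unique Bruhat-maximal element (Proposition~\ref{prop:max}).

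However, your proposed mechanism for proving this---the ``saturation'' claim---is wrong, not merely incomplete. You argue that $W_{K_2}\Adm(\mu)W_{K_2}$ saturates each $W_{K_2}$-double coset it meets; this is trivially true, but it is a statement about $\Adm(\mu)_{K_2}$, whereas the fiber lives in $\CA_{K_1}(\bG,\mu)$ and is governed by $\Adm(\mu)_{K_1}$ with $K_1\subsetneq K_2$. If your argument went through, the maximal element of $wW_{K_2}$ would always be $K_1$-admissible and every fiber would be the \emph{full} partial flag variety $\breve\CK_2/\breve\CK_1$. This is false: Corollary~\ref{cor:sch} in the paper shows that for split $\bG$ and regular $\mu$, \emph{every} Schubert variety in $\breve\CK_0/\breve\CI$ occurs as a fiber of the Iwahori-to-hyperspecial map. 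Concretely, for $w=t^{\ud\mu}y$ with $y\in{}^{I(\mu)}W_0$, the maximum of $\{x\in W_0: xw\in\Adm(\mu)\}$ is $y^{-1}$, not $w_0$.

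The paper's actual argument for Proposition~\ref{prop:max} is quite different and uses geometry of alcoves. The key observation is that the entire coset $wW_{K_2}$ lies in a single acute cone $\CC(\mathfrak{a},z)$ for some $z\in W_0$: take $z$ so that the longest element $w''$ of the coset lies in $\CC(\mathfrak{a},z)$, and then any $w'\in wW_{K_2}$ sits on a minimal gallery from $\mathfrak{a}$ to $w''(\mathfrak{a})$, hence also lies in $\CC(\mathfrak{a},z)$. By the acute-cone criterion of Haines--He (Theorem~\ref{thm:HH}), every $w'\in wW_{K_2}\cap\Adm(\mu)$ then satisfies $w'\le t^{z(\ud\mu)}$ for this \emph{fixed} $z$. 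Thus $wW_{K_2}\cap\Adm(\mu)=\{w'\in wW_{K_2}: w'\le t^{z(\ud\mu)}\}$, and the latter has a unique maximum by Deodhar's theorem. The point you are missing is precisely this uniformity: one needs a single dominating translation for the whole coset, and the acute-cone machinery is what supplies it.
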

By Theorem \ref{thm:irr}, neither $\CA_{K_1}(G, \mu)$ nor $\CA_{K_2}(G, \mu)$ is irreducible in general. It is surprising that the fibers of the level-changing map are always irreducible.

Note that the admissible set $\Adm(\mu)$ is a complicated combinatorial object. To prove Theorem \ref{thm:01}, we use several results on the admissible set established in \cite{He16} and the ``acute cone criterion" given by Haines and the first-named author \cite{HH17}. We reduce Theorem \ref{thm:01} to the following combinatorial result on the admissible set.

\begin{propintro}[Proposition \ref{prop:max}]\label{prop:02}
Let $K$ be a spherical subset of $\tilde \BS$. Let $w\in \Adm(\mu) $. Then the set $wW_K \cap \Adm(\mu)$ contains a unique maximal element.    
\end{propintro}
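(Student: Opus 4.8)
The plan is to analyze the poset structure of $\Adm(\mu)$ restricted to a coset $wW_K$, using the characterization of $\Adm(\mu)$ via the admissible set and the acute cone criterion of \cite{HH17}. Recall that $\Adm(\mu) = \{x \in \widetilde W : x \le t^{\lambda} \text{ for some } \lambda \in W_0\mu\}$ in the Bruhat order, and that by \cite{He16} the admissible set is stable under various combinatorial operations. Fix $w \in \Adm(\mu)$ and set $S = wW_K \cap \Adm(\mu)$; this set is nonempty. Since $W_K$ is finite (as $K$ is spherical), $S$ is a finite subset of the coset $wW_K$, which carries the Bruhat order of $\widetilde W$ restricted to it; this restricted order agrees with the natural order on $W_K$ transported via $w$ (up to the length-additivity subtleties one must track carefully). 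The goal is to show $S$ is a sub-semilattice for the join, or at least that it has a unique maximal element.

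The first step is to reduce to the case where $w$ is the minimal-length element of its coset $wW_K$, i.e. $w \in {}^{?}W^K$ or the appropriate minimal coset representative, using length-additivity $\ell(wu) = \ell(w) + \ell(u)$ for $u \in W_K$. Under this normalization, elements of $wW_K$ are naturally indexed by $W_K$ preserving Bruhat order, so it suffices to prove: the subset $T = \{u \in W_K : wu \in \Adm(\mu)\}$ of the finite Coxeter group $W_K$ has a unique maximal element. The second step is to identify $T$ with the admissible (or a $\mu$-admissible-type) subset inside $W_K$ relative to an induced datum. The key tool here is the description of $\Adm(\mu) \cap \breve{\CK}\text{-cosets}$ and the behavior of $\Adm(\mu)$ under passing to a parahoric: by results in \cite{He16}, $\breve{\CK}\Adm(\mu)\breve{\CK}/\breve{\CK}$ and the combinatorics of $\Adm^K(\mu)$ (the image in $W_K\backslash\widetilde W/W_K$ or the relevant minimal representatives) are controlled, and the intersection with a single coset should be expressible as $w \cdot \mathrm{Adm}_{W_K}(\text{something})$ where $\mathrm{Adm}_{W_K}$ is an admissible set for the finite Weyl group $W_K$ — which is known to have a unique maximal element (in the finite-dimensional setting, $W_K$-admissible sets are intervals-like and the top element is the longest element $w_K$ or $w_0^{J}$-type element of the relevant subgroup). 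The third step is the core combinatorial input from the acute cone criterion: Haines–He characterize membership $x \in \Adm(\mu)$ by a condition on the alcove $x\mathbf{a}$ lying in certain acute cones based at the vertices of $W_0\mu$; one shows this condition, when restricted to alcoves in the finite chamber-complex $wW_K\mathbf{a}$ around a fixed vertex, cuts out a "closed from above" subset, so that if $wu_1, wu_2 \in \Adm(\mu)$ then $w(u_1 \vee u_2) \in \Adm(\mu)$, giving the semilattice property and hence the unique maximum.

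The main obstacle I expect is controlling the interaction between the Bruhat order on the coset $wW_K$ and the global admissibility condition: the acute cone criterion is a statement about the whole affine Weyl group, and it is not a priori obvious that its restriction to a single $W_K$-coset is "upward closed" in the internal Bruhat order of that coset. Concretely, the subtlety is that $wu \le wu'$ in $\widetilde W$ does not by itself force any monotonicity of the acute-cone membership, so one needs the length-additivity normalization from Step 1 together with a careful geometric argument — identifying the coset $wW_K \mathbf{a}$ with (a translate of) the Coxeter complex of $W_K$ and showing the acute-cone condition pulls back to a $W_K$-admissibility condition there. A secondary difficulty is handling the case where $\breve{\CK}$ is not special and the group is not simply-laced, since then the naive identification of the restricted datum with a standard $\mu'$-admissible set for $W_K$ may require replacing $\mu$ by a suitable dominant element of the coweight lattice of the Levi-type subgroup attached to $K$, and one must verify this replacement is well-defined. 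Once the reduction to $W_K$-admissibility is in place, the existence of a unique maximal element follows from the standard fact that admissible sets in a finite Weyl group are "saturated from above."
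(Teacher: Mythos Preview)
Your proposal identifies the right tool (the acute cone criterion of Haines--He) but does not assemble it into a working argument. Both of your main strategies have genuine gaps.

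Your ``Step 2'' attempts to identify $T=\{u\in W_K:wu\in\Adm(\mu)\}$ with an admissible set for $W_K$ relative to some induced cocharacter. No such identification exists in general: the intersection of $\Adm(\mu)$ with a single $W_K$-coset is not itself an admissible set for any natural datum on $W_K$, and you offer no candidate for what the induced $\mu'$ would be. This route is a dead end.

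Your ``Step 3'' proposes to show that $T$ is closed under joins, i.e.\ that $wu_1,wu_2\in\Adm(\mu)$ implies $w(u_1\vee u_2)\in\Adm(\mu)$. The acute cone criterion does not give this directly: it is an implication \emph{from} admissibility plus an acute-cone condition \emph{to} a Bruhat bound, not a criterion for membership in $\Adm(\mu)$. You would need a separate argument that $\Adm(\mu)$ is join-closed within each coset, and none is supplied.

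The paper's proof is short and uses one idea you missed. Rather than normalizing $w$ to be \emph{minimal} in its coset, take $w''$ to be the \emph{maximal} element of $wW_K$. By \cite{HN02} there exists $z\in W_0$ with $w''(\mathfrak a)\in\CC(\mathfrak a,z)$. Since any $w'\in wW_K$ satisfies $w'\le w''$ with length-additivity, a minimal gallery from $\mathfrak a$ to $w''(\mathfrak a)$ passes through $w'(\mathfrak a)$; hence the entire coset $wW_K$ lies in the single acute cone $\CC(\mathfrak a,z)$. Now the Haines--He criterion gives $w'\le t^{z(\underline\mu)}$ for every $w'\in wW_K\cap\Adm(\mu)$, so in fact
\[
wW_K\cap\Adm(\mu)=\{w'\in wW_K: w'\le t^{z(\underline\mu)}\}.
\]
The right-hand side has a unique maximal element by Deodhar's theorem \cite[Theorem~2.2]{Deod87}. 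The point is not join-closure but the existence of a \emph{single uniform Bruhat bound} for the whole coset, after which the problem is purely Coxeter-theoretic.
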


Explicitly describing the element $\max(w W_K \cap \Adm(\mu))$ in Proposition \ref{prop:02} remains a challenging problem. In Theorem \ref{thm:algorithm}, we provide an efficient algorithm to compute this maximal element when $\breve K$ is maximal special. The primary tools used include the quantum Bruhat graph and the Demazure product.

\smallskip

\noindent \textbf{Acknowledgments:} The questions on the irreducibility of local models arose during the collaboration of the first-named author with Ulrich G\"ortz and Michael Rapoport. We thank Felix Schremmer for pointing out the algorithm in \S\ref{sec:3.3} in the Sagemath source code and Mark Shimozono for providing us with the reference \cite{BM15}. We also thank Felix Schremmer for many helpful discussions and comments. XH is partially supported by the New Cornerstone Science Foundation through the New Cornerstone Investigator Program and the Xplorer Prize, as well as by the Hong Kong RGC grant 14300122.

\section{Irreducible components}\label{sec:1}

\subsection{Group theoretic data.}\label{sec:1.1}
Let $k$ be an algebraically closed field and $L=k((t))$ be the field of formal Laurent series over $k$. Let $G$ be a connected reductive group over $L$. Let $\G_0$ be the absolute Galois group of $L$.

Let $S$ be a maximal $L$-split torus of $G$ defined over $L$. Let $T$ be the centralizer of $S$ in $G$. By Steinberg’s theorem, $G$ is quasi-split over $L$. Then $T$ is a maximal torus. We denote by $N_T$ the normalizer of $T$. Let $W_0=N_T(L)/T(L)$ be the (relative) Weyl group. Let $X_*(T)$ be the cocharacter group of $T$ and let $V=X_*(T)_{\G_0} \otimes \BR = X_*(S)\otimes\BR$. 

Let $\CA$ be the apartment of $G$ corresponding to $S$. We fix an alcove $\mathfrak{a}$ of $\CA$ and call it the base alcove. Let $\breve \CI$ be the Iwahori subgroup corresponding to $\mathfrak{a}$. Let $\tW= N_T(L)/(T(L) \cap  \breve \CI)$ be the Iwahori--Weyl group. Choosing a special vertex of $\mathfrak{a}$, by \cite[Proposition 13]{HR}, we can identify $\CA$ with $V$, which gives rise to a natural isomorphism
$$\tW \cong X_*(T)_{\G_0} \rtimes W_0=\{t^{\ud\l} w \mid \ud\l \in X_*(T)_{\G_0}, w \in W_0\}.$$
Let $\tilde \BS$ be the index set of affine simple reflections in $\tW$ determined by $\mathfrak{a}$. In other words, the set of affine simple reflections in $\tW$ is $\{s_i\mid i\in \tilde \BS\}$. Let $\BS_0\subset\tilde \BS$ be the index set of simple reflections in $W_0$. Let $W_{\af}$ be the affine Weyl group, i.e. the subgroup of $\tW$ generated by all affine simple reflections. Let $\ell$ be the corresponding length function and $\le$ be the Bruhat order of $\tW$ with respect to the base alcove $\mathfrak{a}$. For any $w\in \tW$, we fix a representative $\dot{w}\in N_T(L)$. 

For any subset $K$ of $\tilde \BS$, we denote by $W_K$ the subgroup of $\tilde W$ generated by simple reflections in $K$. We say that $K$ is {\it spherical} if $W_K$ is finite. We denote by ${}^K \tW$ (resp. $\tW^K$) the set of minimal representatives of $W_K \backslash \tW$ (resp. $\tW/W_K$) in $\tW$. For any $J, K \subset \tilde \BS$, we simply write ${}^J \tW^K$ for ${}^J \tW \cap \tW^K$. 

Let $\Phi$ be the relative root system of $G$ over $L$. The dominant chamber, which we denote by $C^+$, is by convention the unique Weyl chamber containing the base alcove $\mathfrak{a}$. Let $X_*(T)^+\subseteq X_*(T)$ be the set of dominant cocharacters. 

Let $\Sigma$ be the reduced root system associated with $\Phi$ as in \cite[\S1.7]{Tits1979} and let $\Sigma^+\subseteq \Sigma$ be the set of positive roots determined by the dominant chamber $C^+$. Then $W_{\af}$ equals the affine Weyl group of the root system $\Sigma$. Note that $\Sigma$ might not agree with $\Phi$ even if $\Phi$ is reduced. We have the natural pairing $\<-,-\>$ between $\BR\Sigma$ and $V$.
\subsection{The set $\CA_{K}(\bG,\mu)$}\label{sec:1.2}

Let $\mu\in X_*(T)^+$ be a dominant cocharacter of $\mathbf G$ and $\underline \mu$ its image in $X_*(T)_{\G_0}$. The $\mu$-admissible set is defined as
$$\Adm(\mu) = \{w\in\tW\mid w\le t^{x(\ud\mu)}\text{ for some }x\in W_0\}.$$ 

Let $K$ be a spherical subset of $\tilde\BS$ and let $\breve \CK$ be the standard parahoric subgroup of $\breve G$ corresponding to $K$. Let $\Adm(\mu)_K = W_K\Adm(\mu)W_K$. Define
\begin{align*}
\CA_{K}(\bG,\mu) = \bigcup_{w\in \Adm(\mu)} \breve \CK \dot{w} \breve \CK / \breve \CK = \bigcup_{w\in \Adm(\mu)_{K}} \breve \CI \dot{w} \breve \CI / \breve \CK.
\end{align*}
This is a finite-dimensional closed subvariety of the partial affine flag variety $\breve G/\breve \CK$.

The goal of this paper is to study certain geometric properties of $\CA_{K}(\bG,\mu)$ (or equivalently, the geometric special fiber of the local model). In Theorem \ref{thm:irr}, we give a specific description of the set of irreducible components of $\CA_{K}(\bG,\mu)$. In Theorem \ref{thm:main}, we study the fibers between the natural projection map between $\CA_{K}(\bG,\mu)$ with different level structures.

\subsection{Translation elements in $\Adm(\mu)$}\label{sec:1.3}
The following result was first discovered in \cite[Theorem 6.1]{He16}, motivated by the Kottwitz-Rapoport conjecture on the non-emptiness of affine Deligne-Lusztig varieties. A different proof was obtained later in the joint work of Haines and the first-named author in \cite[Proposition 5.1]{HH17}.
\begin{theorem}\label{thm:AdmmuK}
We have
$$\Adm(\mu)_K\cap \tW^{K} = \Adm(\mu)\cap \tW^{K}.$$
\end{theorem}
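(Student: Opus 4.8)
The inclusion $\Adm(\mu)\cap\tW^{K}\subseteq\Adm(\mu)_{K}\cap\tW^{K}$ is immediate, since $\Adm(\mu)\subseteq\Adm(\mu)_{K}$. For the reverse inclusion the plan is to reduce, using only formal properties of the Bruhat order and the Demazure product, to a statement about the Bruhat-maximal element of a single $W_{K}$-double coset of a translation $t^{x\ud\mu}$, and then to settle that statement with the vertexwise (``acute cone'') criterion for $\mu$-admissibility due to Haines and the first-named author \cite{HH17}.

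I would use two standard facts: $(a)$ $\Adm(\mu)=\bigcup_{x\in W_{0}}[e,t^{x\ud\mu}]$ is a lower set for $\le$, and $u\le v$ implies $u^{K}\le v^{K}$ for the minimal representatives of right $W_{K}$-cosets; $(b)$ for all $a,b,c\in\tW$ one has $abc\le a*b*c$, the Demazure product $*$ is monotone in each variable, and $\max(W_{K}zW_{K})=w_{K}*z*w_{K}$ where $w_{K}$ is the longest element of $W_{K}$. Now let $w\in\tW^{K}\cap\Adm(\mu)_{K}$, and write $w=v_{1}zv_{2}$ with $v_{1},v_{2}\in W_{K}$ and $z\le t^{x\ud\mu}$ for some $x\in W_{0}$. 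By $(b)$,
\[
w=v_{1}zv_{2}\ \le\ v_{1}*z*v_{2}\ \le\ w_{K}*t^{x\ud\mu}*w_{K}\ =\ \max\bigl(W_{K}\,t^{x\ud\mu}\,W_{K}\bigr),
\]
and since $w\in\tW^{K}$, applying $(-)^{K}$ together with $(a)$ yields $w\le\eta_{x}:=\bigl(\max(W_{K}\,t^{x\ud\mu}\,W_{K})\bigr)^{K}\in\tW^{K}$. Because $\Adm(\mu)$ is a lower set, the theorem follows once I establish the assertion $(\star)$: $\eta_{x}\in\Adm(\mu)$ for every $x\in W_{0}$. (Since $\eta_{x}\in W_{K}\,t^{x\ud\mu}\,W_{K}\subseteq\Adm(\mu)_{K}$ and $\eta_{x}\in\tW^{K}$, the assertion $(\star)$ is in fact equivalent to the theorem; invariantly, it says that $\Adm(\mu)\cap\tW^{K}$ is a union of pieces $(W_{K}\eta W_{K})\cap\tW^{K}$ --- admissibility of one minimal right-$W_{K}$-coset representative of a double coset propagates to all of them.)

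To prove $(\star)$ I would invoke the criterion of \cite{HH17}: $\Adm(\mu)$ consists of the elements $w'\in\tW$ that are ``$\mu$-admissible at every vertex $\bar v_{j}$ of the base alcove $\mathfrak a$'', where the condition at $\bar v_{j}$ depends only on the double coset $W_{\tilde\BS\setminus\{j\}}\,w'\,W_{\tilde\BS\setminus\{j\}}$ and is satisfied by every element of $\Adm(\mu)$. Fix $x$; then $\eta_{x}\in W_{K}\,t^{x\ud\mu}\,W_{K}$. If $j\notin K$, then $W_{K}\subseteq W_{\tilde\BS\setminus\{j\}}$, so $\eta_{x}$ and $t^{x\ud\mu}$ lie in the same $W_{\tilde\BS\setminus\{j\}}$-double coset, and $\eta_{x}$ is $\mu$-admissible at $\bar v_{j}$ because $t^{x\ud\mu}$ is. If $j\in K$ this argument breaks down ($s_{j}\notin W_{\tilde\BS\setminus\{j\}}$), and here the defining property $\eta_{x}\in\tW^{K}$ must be used: $\eta_{x}\mathfrak a$ is the alcove closest to $\mathfrak a$ within its $W_{K}$-coset, which should force the admissibility datum of $\eta_{x}$ at $\bar v_{j}$ to be the minimal one in the relevant $W_{K}$-orbit, hence still dominated by $\mu$. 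Making this comparison precise is exactly the ``direction''-controlled argument that the acute cone criterion is built to supply.

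I expect this last step --- the verification of $\mu$-admissibility at the vertices $\bar v_{j}$ with $j\in K$ --- to be the main obstacle, and especially so when $\breve\CK$ is not special, in which case the datum at a vertex is no longer a single cocharacter and one really needs the full strength of the acute cone criterion. This is essentially \cite[Proposition~5.1]{HH17}; the statement was also obtained, by a different and more intrinsic route through the structure of the maximal elements of $\Adm(\mu)$, in \cite[Theorem~6.1]{He16}.
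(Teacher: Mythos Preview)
The paper does not prove this theorem; it is quoted from \cite[Theorem~6.1]{He16} and \cite[Proposition~5.1]{HH17} and used throughout as a black box. There is therefore no in-paper argument to compare your proposal against.

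Your sketch has a genuine gap. The Demazure-product manipulation reducing to $(\star)$ is correct, but as you yourself observe, $(\star)$ is \emph{equivalent} to the theorem, so at that point nothing has yet been proved. All the content lies in your treatment of $(\star)$, and there the ``vertexwise criterion of \cite{HH17}'' you invoke is not what \cite{HH17} actually supplies: their characterization of $\Adm(\mu)$ is as an intersection of \emph{obtuse cones} $B(t^{z(\ud\mu)}\mathfrak{a},z)$ indexed by $z\in W_{0}$, not as a conjunction of conditions indexed by the vertices of $\mathfrak{a}$ and depending only on maximal-parahoric double cosets. If instead you have the Kottwitz--Rapoport permissibility condition in mind, that is genuinely vertexwise but is known not to characterize $\Adm(\mu)$ in general. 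You then concede that the hard case $j\in K$ ``is essentially \cite[Proposition~5.1]{HH17}'' --- but that \emph{is} the statement under discussion, so the argument closes in a circle. A real proof has to supply the mechanism you are gesturing at (as \cite{HH17} does via the obtuse-cone description, or \cite{He16} via a different route through the structure of $\Adm(\mu)$); the reduction to $(\star)$ by itself is only a reformulation.
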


The following proposition was due to the first-named author and Nie.
\begin{proposition}[{\parencite[Proposition 2.1]{HN17}}]\label{prop:maxK}
The set of maximal elements of $\Adm(\mu)\cap \tW^{K}$ is $\{t^{x(\ud\mu)}\mid x\in W_0, t^{x(\ud\mu)} \in \tW^{K}\}$.
\end{proposition}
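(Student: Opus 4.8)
The plan is to prove the two inclusions separately, with the bulk of the work in a combinatorial "Key Lemma" about the Bruhat order of $\tW$. Throughout, write $v^{K}\in\tW^{K}$ for the minimal-length representative of the coset $vW_{K}$; recall the standard facts that $v^{K}$ is the Bruhat-minimum of $vW_{K}$ and that $v\mapsto v^{K}$ is order-preserving on $\tW$. First note that the maximal elements of $\Adm(\mu)$ (without the $\tW^{K}$-constraint) are exactly the translations $\{t^{x(\ud\mu)}:x\in W_{0}\}$: all translations attached to one $W_{0}$-orbit have the same length, by the formula $\ell(t^{\nu})=\sum_{\a\in\Sigma^{+}}|\<\a,\nu\>|$, so they are pairwise Bruhat-incomparable, while every element of $\Adm(\mu)$ lies below one of them by definition. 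This already gives "$\supseteq$": if $t^{x(\ud\mu)}\in\tW^{K}$ and $t^{x(\ud\mu)}\le w$ with $w\in\Adm(\mu)\cap\tW^{K}\subseteq\Adm(\mu)$, then $w=t^{x(\ud\mu)}$ by maximality in $\Adm(\mu)$, so $t^{x(\ud\mu)}$ is maximal in $\Adm(\mu)\cap\tW^{K}$.

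For the reverse inclusion, let $w$ be any maximal element of $\Adm(\mu)\cap\tW^{K}$; it suffices to produce $x\in W_{0}$ with $t^{x(\ud\mu)}\in\tW^{K}$ and $w\le t^{x(\ud\mu)}$, since then $w=t^{x(\ud\mu)}$ by maximality of $w$. I would run a monovariant argument. Among all $x\in W_{0}$ with $w\le t^{x(\ud\mu)}$ — a nonempty set, as $w\in\Adm(\mu)$ — choose one for which $\ell\bigl((t^{x(\ud\mu)})^{K}\bigr)$ is maximal, and suppose for contradiction that $t^{x(\ud\mu)}\notin\tW^{K}$. Then some $s_{i}$, $i\in K$, is a right descent of $t^{x(\ud\mu)}$; write $s_{i}=r_{\b,k}$, the reflection in the affine hyperplane $H_{\b,k}=\{\<\b,\cdot\>=k\}$ with $\b$ a positive root and $k\in\BZ$ (here $k=0$ precisely when $i\in\BS_{0}$). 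Since $s_{i}\in W_{K}$ we have $(t^{x(\ud\mu)}s_{i})^{K}=(t^{x(\ud\mu)})^{K}\le t^{x(\ud\mu)}s_{i}$; combining this with the Key Lemma stated below yields $w\le(t^{x(\ud\mu)})^{K}\le t^{x(\ud\mu)}s_{i}\le t^{s_{\b}x(\ud\mu)}$, so $x':=s_{\b}x$ is again an admissible choice, and applying $(-)^{K}$ gives $(t^{x(\ud\mu)})^{K}\le(t^{x'(\ud\mu)})^{K}$. This inequality is strict: equality would force $t^{x(\ud\mu)-s_{\b}x(\ud\mu)}=t^{\<\b,x(\ud\mu)\>\b^{\vee}}\in W_{K}$, hence $\<\b,x(\ud\mu)\>=0$ since $W_{K}$ is finite, but then $t^{x(\ud\mu)}$ commutes with $r_{\b,k}$, and comparing the positions of $\mathfrak a$ and $t^{x(\ud\mu)}\mathfrak a$ relative to $t^{x(\ud\mu)}(H_{\b,k})=H_{\b,k}$ shows $t^{x(\ud\mu)}s_{i}>t^{x(\ud\mu)}$ — a contradiction. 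Therefore $\ell\bigl((t^{x'(\ud\mu)})^{K}\bigr)>\ell\bigl((t^{x(\ud\mu)})^{K}\bigr)$, contradicting the choice of $x$. Hence $t^{x(\ud\mu)}\in\tW^{K}$, which completes the reduction.

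It remains to establish the Key Lemma: \emph{for a root $\b$, $k\in\BZ$, and $\lambda\in X_{*}(T)_{\G_{0}}$, if $t^{\lambda}r_{\b,k}<t^{\lambda}$ then $t^{\lambda}r_{\b,k}\le t^{s_{\b}\lambda}$.} When $k=0$ (so $r_{\b,k}=s_{\b}$, a finite simple reflection) this is immediate: $t^{\lambda}s_{\b}=s_{\b}t^{s_{\b}\lambda}$, and since $\ell(t^{s_{\b}\lambda})=\ell(t^{\lambda})>\ell(t^{\lambda}s_{\b})$ we get $\ell(s_{\b}t^{s_{\b}\lambda})<\ell(t^{s_{\b}\lambda})$, hence $t^{\lambda}s_{\b}=s_{\b}t^{s_{\b}\lambda}<t^{s_{\b}\lambda}$ (using that for a reflection $r$ and $v\in\tW$ one has $rv<v$ iff $\ell(rv)<\ell(v)$). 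The main obstacle is the affine case $k\ne0$: there $t^{s_{\b}\lambda}$ is neither a left translate of $t^{\lambda}r_{\b,k}$ nor "closer to the base alcove" in any naive sense. I would prove it by restricting the Bruhat order of $\tW$ to the coset of the infinite dihedral reflection subgroup $D=\langle r_{\b,j}:j\in\BZ\rangle=\langle s_{\b},t^{\b^{\vee}}\rangle$ — whose canonical Coxeter generators and the Bruhat order it induces on cosets are controlled by the theory of reflection subgroups — and then locating $t^{\lambda}r_{\b,k}$ and $t^{s_{\b}\lambda}$ inside $t^{\lambda}D$ via the single coordinate $\<\b,\cdot\>$; an alternative is a direct estimate with the standard length formula for $\tW$. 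Carrying this out uniformly over all types, in particular the non–simply-laced ones where the root $\b$ and level $k$ attached to an affine node are less transparent, is where the genuine work lies.
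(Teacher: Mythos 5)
The paper merely cites this proposition from He--Nie and gives no proof of its own, so your argument must be judged on its own terms. The structure is sound: the $\supseteq$ inclusion via pairwise incomparability of the translations $t^{x(\ud\mu)}$, the monovariant on $\ell\bigl((t^{x(\ud\mu)})^{K}\bigr)$, the use of order-preservation of $v\mapsto v^{K}$, and the contradiction when $\<\b,x(\ud\mu)\>=0$ (the translation direction is then parallel to $H_{\b,k}$, so $s_i$ cannot be a right descent) all check out.

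The one worry you flag --- proving the Key Lemma for $k\neq 0$ --- is in fact not an obstacle, because the identity you invoked for $k=0$ holds for every $k$. From $r_{\b,k}=t^{k\b^{\vee}}s_{\b}$ and $s_{\b}t^{s_{\b}\lambda}=t^{\lambda}s_{\b}$ one gets
\begin{equation*}
t^{\lambda}r_{\b,k}\;=\;t^{\lambda+k\b^{\vee}}s_{\b}\;=\;r_{\b,k}\,t^{s_{\b}\lambda},
\end{equation*}
so $t^{\lambda}r_{\b,k}$ \emph{is} a left reflection-translate of $t^{s_{\b}\lambda}$, with the same reflection $r_{\b,k}$, exactly as in your $k=0$ argument. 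Since $\ell(t^{s_{\b}\lambda})=\ell(t^{\lambda})$ and by hypothesis $\ell(t^{\lambda}r_{\b,k})<\ell(t^{\lambda})$, the left factor $r_{\b,k}$ shortens $t^{s_{\b}\lambda}$; for a reflection a strict length drop is equivalent to a strict Bruhat drop, so $t^{\lambda}r_{\b,k}=r_{\b,k}t^{s_{\b}\lambda}<t^{s_{\b}\lambda}$, which is the Key Lemma. No reduction to the dihedral reflection subgroup and no type-by-type analysis is needed. (This also passes correctly from $W_{\af}$ to $\tW$: $s_{\b}\lambda-\lambda$ lies in the coroot lattice, so $t^{\lambda}$ and $t^{s_{\b}\lambda}$ share an $\Omega$-component and the length comparison happens inside $W_{\af}$.) With this substituted for your final paragraph, the proof is complete.
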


We now give a more explicit interpretation of the condition $t^{x(\ud\mu)}\in \tW^{K}$ in Proposition \ref{prop:maxK}. We first introduce the set of affine roots.

Let $\Phi_{\mathrm{af}}\subseteq \Phi\times\BQ$ be the set of affine roots constructed as in \cite[\S1.6]{Tits1979}. We have an identification $\Phi_{\af}\cong \Sigma\times \BZ$ as in \cite{HR}. For any $\g\in \Sigma$, set
\begin{align*}
\d_\g = 
    \begin{cases}
        0,&\g\in\Sigma^+;\\
        1,&\g\in\Sigma^-.
    \end{cases}
\end{align*}
By convention, we choose the set of positive affine roots as $\Phi_{\af}^+ = \{(\a,k)\mid \a\in\Sigma, k\ge \d_{\a} \}$. For any $w = t^{\ud{\l}}y \in   \tW$, the action of $w$ on an affine root $\tilde{\a} = (\a,k)\in \Sigma\times\BZ \cong \Phi_{\af}$ is given by $w(\a,k) = (y(\a), k-\<\ud{\l},y(\a)\>)$. For any affine root $\tilde{\a} = (\a,k)$, the corresponding root hyperplane is $H_{\tilde\a} = \{v\in V \mid \<v,\a\> =-k\}$. \footnote{In literature such as \cite{GHKR10} and \cite{He14}, the positive affine roots are $\{(\a,k)\mid \a\in\Sigma, k\ge \d_{-\a} \}$, and the formula $k -\<\ud{\l},x(\a)\>$ is replaced by $k +\<\ud{\l},x(\a)\>$ in the definition of the action.} 

For any $i\in\BS_0$, let $\a_i\in \Sigma^+$ be the corresponding simple root. For any $i\in \tilde \BS - \BS_0$, set $\a_i = -\th_i$, where $\th_i$ is the highest root in the irreducible component of $\Sigma$ containing $i$. For any $i\in\tilde\BS$, define $\tilde{\a}_i = (\a_i,\d_{\a_i}) \in \Phi_{\af}^+$, the simple affine root.

Note that for any $w\in\tW$ and $i\in \tilde \BS$, $ws_i>w \text{ if and only if } w\tilde{\a}_i\in\Phi_{\af}^+$. Hence, for any $\ud\l \in X_*(T)_{\G_0}$, we have
\begin{align*}\tag{1.1}\label{eq:1.1}
    t^{\ud{\l}}\in \tW^{K} \iff& t^{\ud \l} (\a_i,\d_{\a_i}) \in \Phi_{\af}^+ \text{ for any } i\in K\\
                           \iff& (\a_i, \d_{\a_i} - \<\ud \l,\a_i\>) \in \Phi_{\af}^+ \text{ for any } i\in K\\
                           \iff& \d_{\a_i} - \<\ud \l,\a_i\> \ge \d_{\a_i} \text{ for any } i\in K\\
                           \iff& \<\ud \l,\a_i\> \le 0 \text{ for any } i\in K.
\end{align*}
Let $$C_K = \{v\in V\mid \<v,\a_i\> > 0 \text{ for any }i\in K\}.$$ 
Then (\ref{eq:1.1}) implies that
\begin{align*}\tag{1.2}\label{eq:1.2}
    t^{\ud{\l}}\in \tW^{K} \iff  \ud\l \in -\overline{C_K}.
\end{align*}
Here, $\overline{C_K}$ is the closure of $C_K$. Let $W_{\pr(K)}$ be the image of $W_K$ under the natural projection map $\pr: \tW \longrightarrow W_0$. Observe that $-C_K$ is a Weyl chamber of $W_{\pr(K)}$. Hence, for any $\ud\l \in X_*(T)_{\G_0}$, there exists a unique $W_{\pr(K)}$-conjugate of $\ud\l$ that lies in $-\overline{C_K}$. Combined with (\ref{eq:1.2}), we obtain the following lemma.
\begin{lemma}\label{lem:WK-conjugate}
Let $\ud\l \in X_*(T)_{\G_0}$. Then there exists a unique $W_K$-conjugate (or equivalently, $W_{\pr(K)}$-conjugate) of $t^{\ud\l}$ that lies in $\tW^{K}$.
\end{lemma}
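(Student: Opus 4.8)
The proof is essentially contained in the discussion immediately preceding the lemma; what remains is to translate between $W_K$-conjugates of the translation element $t^{\ud\l}$ and points of the orbit $W_{\pr(K)}\cdot\ud\l$ in $V$.

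Concretely, I would proceed as follows. Using the semidirect product decomposition $\tW = X_*(T)_{\G_0} \rtimes W_0$, for $w = t^{\ud\nu}y \in \tW$ with image $y = \pr(w) \in W_0$ one computes $w\,t^{\ud\l}\,w\i = t^{\ud\nu}\,t^{y(\ud\l)}\,t^{-\ud\nu} = t^{y(\ud\l)}$. Hence every $W_K$-conjugate of $t^{\ud\l}$ is again a translation element, of the form $t^{x(\ud\l)}$ with $x \in W_{\pr(K)}$, every such element occurs, and $W_K$-conjugacy and $W_{\pr(K)}$-conjugacy of $t^{\ud\l}$ determine the same subset of $\tW$; this disposes of the parenthetical equivalence in the statement. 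Applying (\ref{eq:1.2}) with $x(\ud\l)$ in place of $\ud\l$ gives $t^{x(\ud\l)} \in \tW^{K}$ if and only if $x(\ud\l) \in -\overline{C_K}$. So the lemma becomes the assertion that the orbit $W_{\pr(K)}\cdot\ud\l$ meets $-\overline{C_K}$ in exactly one point — which is precisely what was observed just before the lemma, using that $-C_K$ is an open Weyl chamber of the finite reflection group $W_{\pr(K)}$ acting on $V$, so that $-\overline{C_K}$ is a fundamental domain for this action and every orbit meets it in a unique point.

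I do not expect any real obstacle here: the argument is bookkeeping on top of material already in the text. The only point deserving a word of care is that $-C_K$ really is a single chamber of $W_{\pr(K)}$ when $K \not\subseteq \BS_0$, since in that case the roots $\a_i = -\th_i$ ($i \in K$) are negatives of highest roots rather than simple roots of $\Phi$; one uses here that $K$ is spherical, so that $W_{\pr(K)}$ is finite and $\{\a_i \mid i \in K\}$ forms a base of the root subsystem it generates. Granting that (as asserted in the text), the remainder is the classical fact that the closure of a Weyl chamber is a fundamental domain for a finite reflection group, combined with the conjugation computation above.
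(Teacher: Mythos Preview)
Your proposal is correct and follows exactly the approach of the paper, which derives the lemma from (\ref{eq:1.2}) together with the observation that $-C_K$ is a Weyl chamber of $W_{\pr(K)}$; you have simply made explicit the conjugation computation $w\,t^{\ud\l}\,w\i = t^{\pr(w)(\ud\l)}$ and the translation between $W_K$-conjugates of $t^{\ud\l}$ and the orbit $W_{\pr(K)}\cdot\ud\l$, which the paper leaves implicit.
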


\subsection{Main result}
In this subsection, we give a parameterization of $\mathrm{Irr}\CA_{K}(\bG,\mu) $, the set of irreducible components of the variety $\CA_{K}(\bG,\mu)$.

We say that a spherical subset $K' \subseteq \tilde{\BS}$ is special if the complement of $K' $ in $\tilde\BS$ is a set of special vertices, one from each irreducible component of $\Sigma$. Note that $K' $ is special if and only if $W_{\pr(K' )} = W_0$ (see \cite[\S1.9]{Tits1979}). Otherwise, we say that $K'$ is non-special.

Let $W_{\text{short}}$ be the subgroup of $W_0$ generated by the simple reflections associated with the short roots of $\Sigma$ (if $\Sigma$ is simply-laced, then $W_{\mathrm{short}}$ is the trivial group). The following table describes the type of $W_{\mathrm{short}}$ case by case. 
\begin{center}
\begin{tabular}{ |c|c|} 
 \hline
 Type of $W_0$ & Type of $W_{\text{short}}$ \\ 
 \hline
 Simply-laced type & Trivial group \\
 \hline
 $B_n$ & $A_1$  \\
 \hline 
 $C_n$&  $A_{n-1}$ \\
 \hline
 $F_4$&  $A_2$ \\
 \hline  
 $G_2$& $A_1$ \\
 \hline
\end{tabular}
\end{center}

Let $K$ be a spherical subset of $\tilde \BS$ and $W_{\pr(K)}$ be the image of $W_K$ under the natural projection map $\pr: \tW \longrightarrow W_0$. Let $I(\mu) = \{i\in\BS_0\mid \<\ud\mu,\a_i\> = 0\}$. 

We state the main result of this section. 
\begin{theorem}\label{thm:irr}
We have a natural bijection \begin{align*}
 \Xi:   W_{\pr(K)} \backslash W_0/W_{I(\mu)} \tilde{\longrightarrow} \mathrm{Irr}\CA_{K}(\bG,\mu), \quad [x]\longmapsto  \overline{\breve \CK t^{x(\ud \mu)} \breve \CK/\breve \CK}.
\end{align*}
Moreover, if $\mathbf G$ is quasi-simple over $L$ and $\mu$ is non-central, then $\CA_{K}(\bG,\mu)$ is irreducible if and only if 
\begin{enumerate} 
\item $K$ is maximal special; 
or 
\item $W_0=W_{\pr(K)} W_{\text{short}}$ and $W_{\text{short}}$ stabilizes $\mu$.
\end{enumerate}
\end{theorem}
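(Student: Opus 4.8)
The plan is to treat the two assertions separately; the first is essentially formal given the cited results, and the second reduces to a group-theoretic statement about products of reflection subgroups.

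\medskip

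\noindent\emph{The bijection.} Since $\CA_{K}(\bG,\mu)$ is closed and equals the union of the Schubert cells $\breve\CI\dot v\breve\CI/\breve\CK$ over $v\in\Adm(\mu)_K\cap\tW^K$ (for $w\in\Adm(\mu)_K$ the image of $\breve\CI\dot w\breve\CI$ in $\breve G/\breve\CK$ is the Schubert cell attached to the minimal representative of $wW_K$, which again lies in $\Adm(\mu)_K$), we get
\begin{equation*}
\CA_{K}(\bG,\mu)=\bigcup_{v\in\Adm(\mu)_K\cap\tW^K}\overline{\breve\CK\dot v\breve\CK/\breve\CK},
\end{equation*}
and $\mathrm{Irr}\,\CA_{K}(\bG,\mu)$ is the set of Schubert varieties attached to the Bruhat-maximal elements of the order ideal $\Adm(\mu)_K\cap\tW^K$. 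By Theorem~\ref{thm:AdmmuK} this ideal equals $\Adm(\mu)\cap\tW^K$, and by Proposition~\ref{prop:maxK} its maximal elements are the $t^{x(\ud\mu)}$ with $x\in W_0$ and $t^{x(\ud\mu)}\in\tW^K$; by (\ref{eq:1.2}) the latter condition reads $x(\ud\mu)\in-\overline{C_K}$. Hence $\mathrm{Irr}\,\CA_{K}(\bG,\mu)$ is in bijection with $(W_0\ud\mu)\cap(-\overline{C_K})$ via $\nu\mapsto\overline{\breve\CK t^{\nu}\breve\CK/\breve\CK}$. As $\mu$ is dominant, $\ud\mu$ lies in the closed dominant chamber of $\Sigma$, so $\mathrm{Stab}_{W_0}(\ud\mu)=W_{I(\mu)}$ and $W_0\ud\mu\cong W_0/W_{I(\mu)}$; and since $-\overline{C_K}$ is the closure of a Weyl chamber of $W_{\pr(K)}$, it is a strict fundamental domain for $W_{\pr(K)}$, so $(W_0\ud\mu)\cap(-\overline{C_K})$ is a complete set of representatives for $W_{\pr(K)}\backslash W_0/W_{I(\mu)}$. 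Finally Lemma~\ref{lem:WK-conjugate} gives $\breve\CK t^{\nu}\breve\CK=\breve\CK t^{x(\ud\mu)}\breve\CK$ whenever $\nu$ and $x(\ud\mu)$ are $W_{\pr(K)}$-conjugate, which identifies this bijection with the stated map $\Xi$. In particular $\CA_{K}(\bG,\mu)$ is irreducible if and only if $W_0=W_{\pr(K)}W_{I(\mu)}$, i.e.\ iff $W_{\pr(K)}$ acts transitively on $W_0\ud\mu$.

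\medskip

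\noindent\emph{The ``if'' part of the classification.} If $K$ is (maximal) special then $W_{\pr(K)}=W_0$, so the condition holds; and if $W_0=W_{\pr(K)}W_{\text{short}}$ with $W_{\text{short}}$ stabilizing $\mu$, then $W_{\text{short}}\subseteq W_{I(\mu)}$, whence $W_0=W_{\pr(K)}W_{\text{short}}\subseteq W_{\pr(K)}W_{I(\mu)}\subseteq W_0$.

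\medskip

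\noindent\emph{The ``only if'' part.} Assume $\bG$ quasi-simple, $\mu$ non-central, and $W_0=W_{\pr(K)}W_{I(\mu)}$. If $K$ is special then, $\tilde\BS$ being connected, $\tilde\BS\setminus K$ is a single special vertex, so $K$ is maximal special (case (1)). Suppose $K$ is non-special, so $W_{\pr(K)}\subsetneq W_0$. I would first show $K$ is maximal. Let $V_0\subseteq V$ be the reflection representation of $W_0$ (the span of the coroots), an irreducible $W_0$-module with $V_0^{W_0}=0$; non-centrality of $\mu$ means the $V_0$-component of $\ud\mu$ is nonzero. If $W_{\pr(K)}$ were not of maximal rank it would fix some $0\neq v^{\ast}\in V_0$; for a $W_0$-invariant inner product, $\nu\mapsto(\nu,v^{\ast})$ is constant on $W_{\pr(K)}$-orbits, so transitivity on $W_0\ud\mu$ forces $(\ud\mu,wv^{\ast})$ to be independent of $w\in W_0$; averaging over $W_0$ and using $V_0^{W_0}=0$ this value is $0$, so $\ud\mu\perp\mathrm{span}(W_0v^{\ast})=V_0$, contradicting non-centrality. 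Hence $W_{\pr(K)}$ has maximal rank; since $\{\alpha_i\mid i\in K\}$ is linearly independent (it is a proper subset of the affine simple roots), this forces $|K|=|\BS_0|$, i.e.\ $K=\tilde\BS\setminus\{i_0\}$ for a non-special node $i_0\neq 0$, and $W_{\pr(K)}=\langle s_{\alpha_i}\mid i\in\BS_0\setminus\{i_0\}\rangle\langle s_{\theta}\rangle$ is the corresponding maximal-rank subgroup of $W_0$. It then remains to go through the non–simply-laced irreducible affine diagrams $\tilde B_n,\tilde C_n,\tilde F_4,\tilde G_2$ and their non-special nodes $i_0$: in each case one computes the $W_0$-set $W_0/W_{\pr(K)}$, determines for which standard parabolics $W_{I(\mu)}$ one has $W_{\pr(K)}W_{I(\mu)}=W_0$, and checks that this forces $i_0$ to be the node for which $W_{\pr(K)}$ is complemented by $W_{\text{short}}$ (so $W_0=W_{\pr(K)}W_{\text{short}}$) together with $W_{I(\mu)}\supseteq W_{\text{short}}$ (so $W_{\text{short}}$ stabilizes $\mu$), which is case (2); the same computation produces the explicit list of Proposition~\ref{prop:class}.

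\medskip

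The first assertion and the reduction ``irreducible $\iff W_0=W_{\pr(K)}W_{I(\mu)}$'' are formal once Theorem~\ref{thm:AdmmuK}, Proposition~\ref{prop:maxK} and the fundamental-domain property of $-\overline{C_K}$ are in place, and ruling out non-maximal $K$ is short. The main obstacle is the last step: proving that for non-special maximal $K$ the equation $W_0=W_{\pr(K)}W_{I(\mu)}$ genuinely forces the rigid shape of case (2). I expect this to need either a careful diagram-by-diagram study of the coset spaces $W_0/W_{\pr(K)}$ attached to the maximal-rank subgroups coming from non-special affine nodes, or a uniform transitivity statement for products of parabolic (reflection) subgroups of an irreducible Weyl group, organized around the short/long dichotomy in $\Sigma$; isolating and proving that uniform statement is the delicate point.
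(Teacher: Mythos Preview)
Your argument for the bijection is correct and follows the paper's approach, though you package it more efficiently: you directly identify $\mathrm{Irr}\,\CA_K(\bG,\mu)$ with the Bruhat-maximal elements of $\Adm(\mu)\cap\tW^K$ and then with $(W_0\ud\mu)\cap(-\overline{C_K})$ via Proposition~\ref{prop:maxK} and the fundamental-domain property of $-\overline{C_K}$, whereas the paper checks well-definedness, surjectivity, and injectivity of $\Xi$ separately.

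For the ``only if'' direction your route diverges from the paper's. Your maximal-rank argument (pairing against a $W_{\pr(K)}$-fixed vector in the reflection representation and averaging over $W_0$) is a genuinely different and elegant way to force $K$ to be maximal; the paper instead never isolates this step, but reduces to maximal $K$ via monotonicity (if $\supp({}^{\pr(K)}W_0)=\BS_0$ then the same holds for all $K'\subsetneq K$). The paper's central device is the Coxeter-theoretic Lemma~\ref{lem:supp}, which shows $W_0=W_{\pr(K)}W_J$ is equivalent to ${}^{\pr(K)}W_0\subseteq W_J$, thereby recasting the classification as a computation of $\supp({}^{\pr(K)}W_0)$ carried out by describing the chamber $C_K^+$ explicitly in each type.

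There is, however, a genuine gap in your outline. After reducing to maximal non-special $K=\tilde\BS\setminus\{i_0\}$, you propose to go through only the non--simply-laced diagrams $\tilde B_n,\tilde C_n,\tilde F_4,\tilde G_2$. But the simply-laced diagrams $\tilde D_n$ (nodes $2,\ldots,n-2$) and $\tilde E_6,\tilde E_7,\tilde E_8$ all possess non-special vertices, and in those cases you still owe the verification that $W_0\neq W_{\pr(K)}W_{I(\mu)}$ for every non-central $\mu$; since $W_{\text{short}}$ is trivial in simply-laced type, condition~(2) collapses to $K$ special, so these cases must be excluded outright rather than fit into~(2). The paper does treat them: type $D_n$ explicitly via the description of $C_K^+$ (showing $\supp({}^{\pr(K)}W_0)=\BS_0$), and the exceptional types by computer. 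Your maximal-rank argument does not by itself rule them out, so without this additional case analysis the ``only if'' implication is incomplete.
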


We have the following more explicit classification. 

\begin{proposition}\label{prop:class}
    Assume that $\mathbf G$ is quasi-simple over $L$ and $\mu$ is non-central. Then $W_0=W_{\pr(K)} W_{\text{short}}$ if and only if $K$ is maximal special or is in the following table: \begin{center}\label{table}
\begin{tabular}{ |c|c|} 
 \hline
 Type & $K$\\ 
 \hline
 $ B_n$ & $\tilde\BS-\{ n\}$ \\
 \hline 
 $C_n$&  $\tilde\BS -\{i\}$ \text{ for some } $i \ne 0,n$ \\
 \hline
 $F_4$&   $\tilde\BS - \{4\}$ \\
 \hline  
 $G_2$& $\tilde\BS - \{2\}$ \\
 \hline
\end{tabular}
\end{center}
\end{proposition}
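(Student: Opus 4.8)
Since $\mathbf G$ is quasi-simple over $L$, the relative root system $\Phi$, and hence $\Sigma$, is irreducible, so $\tilde\BS$ is a connected affine Dynkin diagram. Then a spherical $K\subseteq\tilde\BS$ is special exactly when $\tilde\BS\setminus K$ consists of a single special vertex, i.e.\ exactly when $K$ is maximal special, and this is the same as $W_{\pr(K)}=W_0$ (as recalled before the statement). So the plan is to prove, by a case analysis over the type of $\tilde\BS$, that for spherical non-special $K$ one has $W_0=W_{\pr(K)}W_{\mathrm{short}}$ precisely for the $K$ in the table. Throughout I use the reflection-subgroup description $W_{\pr(K)}=\langle s_i : i\in K\cap\BS_0\rangle$, enlarged by $\langle s_\theta\rangle$ when $0\in K$, as a subgroup of $W_0$.

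If $\Sigma$ is simply-laced then $W_{\mathrm{short}}=1$, so $W_0=W_{\pr(K)}W_{\mathrm{short}}$ says exactly $W_{\pr(K)}=W_0$, i.e.\ $K$ maximal special, and the table is empty; there is nothing more to prove. Assume then that $\Sigma$ is of type $B_n$, $C_n$, $F_4$ or $G_2$. I would first introduce the coweight $\rho\in V$ with $\langle\alpha_i,\rho\rangle=0$ for every short simple root $\alpha_i$ and $\langle\alpha_i,\rho\rangle=1$ for every long simple root (a sum of fundamental coweights); by the usual description of point stabilizers, $\mathrm{Stab}_{W_0}(\rho)=W_{\mathrm{short}}$, so $W_0\cdot\rho\cong W_0/W_{\mathrm{short}}$ and
\[
W_0=W_{\pr(K)}W_{\mathrm{short}}\quad\Longleftrightarrow\quad W_{\pr(K)}\text{ acts transitively on }W_0\cdot\rho.
\]
Next I would record that the reflection subgroup $W_\ell\subseteq W_0$ generated by the reflections in the long roots is normal and satisfies $W_\ell\cap W_{\mathrm{short}}=1$ and $[W_0:W_\ell]=|W_{\mathrm{short}}|$ (so $W_0=W_\ell\rtimes W_{\mathrm{short}}$; explicitly $W_\ell$ is $W(D_n)$, $(\BZ/2)^n$, $W(D_4)$, $W(A_2)$ in the four cases). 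In particular $W_\ell$ itself acts transitively on $W_0\cdot\rho$, so $W_{\pr(K)}\supseteq W_\ell$ is sufficient for the transitivity above; and, using that $W_{\pr(K)}$ is a product of the Weyl groups of the irreducible components of its root subsystem, one checks that it is also necessary. The problem thus becomes: classify the spherical $K$ with $W_{\pr(K)}\supseteq W_\ell$.

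This is now a concrete computation with the affine diagram. In type $\tilde C_n$, with $\tilde\BS=\{0,1,\dots,n\}$ along the path, the nodes of $\tilde\BS\setminus K$ split the coordinates $e_1,\dots,e_n$ into consecutive blocks and $W_{\pr(K)}$ is the direct product of the Weyl groups of these blocks; a block contains all sign changes of its coordinates exactly when it abuts node $0$ or node $n$, and none of them otherwise. Hence $W_{\pr(K)}\supseteq W_\ell=(\BZ/2)^n$ iff every block abuts $0$ or $n$, which forces $|\tilde\BS\setminus K|=1$, i.e.\ $K=\tilde\BS\setminus\{i\}$ — giving the maximal special cases $i\in\{0,n\}$ and, for $0<i<n$, exactly the table entry. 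Types $B_n$, $F_4$, $G_2$ are handled the same way: in $F_4$ and $G_2$ a short finite check leaves only $K=\tilde\BS\setminus\{4\}$ (resp.\ $K=\tilde\BS\setminus\{2\}$) among the non-special $K$, and in $B_n$ the block analysis leaves only $K=\tilde\BS\setminus\{n\}$.

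The main obstacle is the combinatorial bookkeeping, on two fronts. In the reformulation step, the delicate point is the \emph{necessity} of $W_{\pr(K)}\supseteq W_\ell$ for transitivity on $W_0\cdot\rho$: this must be read off from the structure of reflection subgroups of $W_0$ rather than from cardinalities, which alone do not suffice. In the classification step, the difficulty is to make the block/fixed-vector argument uniform in $n$ for the infinite families $\tilde B_n$ and $\tilde C_n$ and to be certain that no exceptional $K$ is overlooked; one also has to double-check, for each diagram, which vertices are special and which simple roots are short, so that $W_{\mathrm{short}}$ and the list of maximal special $K$ are the correct ones.
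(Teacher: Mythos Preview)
Your approach is correct and genuinely different from the paper's. The paper reformulates the factorization $W_0=W_{\pr(K)}W_J$ via Lemma~\ref{lem:supp} as the condition $\supp\bigl({}^{\pr(K)}W_0\bigr)\subseteq J$, and then carries out a chamber-geometric case analysis: it describes the region $C_K^+$ explicitly for each classical type, exhibits ``sufficiently non-dominant'' points of $C_K^+$ whose $W_0$-dominant translates require full support, and handles the exceptional types by computer. Your route---reducing to the subgroup containment $W_{\pr(K)}\supseteq W_\ell$ and then reading this off from the block structure of the affine subdiagram---is more algebraic and avoids chamber geometry entirely; the paper's route, on the other hand, yields as a byproduct the exact value of $\supp\bigl({}^{\pr(K)}W_0\bigr)$ in the listed cases, which is what feeds into the uniform statement of Theorem~\ref{thm:irr}.

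Regarding the point you flag as the main obstacle, the necessity of $W_{\pr(K)}\supseteq W_\ell$: it does hold, and the missing ingredient is that $W_{\pr(K)}$ itself inherits the semidirect product structure, namely
\[
W_{\pr(K)} \;=\; \bigl(W_{\pr(K)}\cap W_\ell\bigr)\rtimes\bigl(W_{\pr(K)}\cap W_{\text{short}}\bigr).
\]
This is because the simple roots $\{\alpha_i:i\in K\}$ of $\Sigma_{\pr(K)}$ are either long (including $\alpha_0=-\theta$) or are short \emph{simple} roots of $\Sigma$; consequently each irreducible component of $\Sigma_{\pr(K)}$ has its long/short structure inherited from $\Sigma$, its long-root subgroup lies in $W_\ell$, and its ``short simple'' subgroup lies in $W_{\text{short}}=W_J$. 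Since $W_\ell\cap W_{\text{short}}=1$, one gets $W_{\pr(K)}\cap W_\ell=(W_{\pr(K)})_\ell$ and $W_{\pr(K)}\cap W_{\text{short}}=W_{K\cap J}$, and the displayed decomposition follows. Granting it, $|W_{\pr(K)}|/|W_{\pr(K)}\cap W_{\text{short}}|=|W_{\pr(K)}\cap W_\ell|$, so $W_0=W_{\pr(K)}W_{\text{short}}$ forces $|W_{\pr(K)}\cap W_\ell|=|W_\ell|$, i.e.\ $W_\ell\subseteq W_{\pr(K)}$. With this in hand, your block analysis for $\tilde B_n$, $\tilde C_n$ and the finite check for $\tilde F_4$, $\tilde G_2$ go through exactly as you describe and recover the table.
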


\begin{remark}
\begin{enumerate}
    \item The type $C_n$ case in the above table contains the Siegel case, which is well-known among experts.
    \item We observe that in the above table, $K$ always equals $\tilde\BS - \{\text{a short root}\}$. However, in the case of type $F_4$, $K = \tilde\BS - \{3\}$ is not in the table. Indeed, in this case, the element $w = s_3s_2s_1$ is not in $W_{\pr(K)} W_{\text{short}}$.
\end{enumerate}

\end{remark}

\subsection{Description of the irreducible components}In this section, we prove that the map $\Xi:[x]\longmapsto  \overline{\breve \CK t^{x(\ud \mu)} \breve \CK/\breve \CK}$ in Theorem \ref{thm:irr} is bijective. For $x\in W_0$, we denote by $[x]$ the image of $x$ in $W_{\pr(K)} \backslash W_0 / W_{I(\mu)}$. For $w\in\tW$, we denote by $\<w\>$ the image of $w$ in $W_K \backslash \tW / W_K$.

First, we prove that $\Xi$ is independent of the choice of the representative. Let $x, x'\in W_0$ be such that $x' = u x z$ for some $u\in W_{\pr(K)}$ and $z \in  W_{I(\mu)}$. Note that $u = \pr(w)$ for some $w\in W_K$. Then $t^{x'(\ud \mu)} = t^{u x z(\ud \mu)} = t^{u x(\ud \mu)} = w t^{x(\ud\mu)} w^{-1}$. Thus, we have $\breve \CK t^{x'(\ud \mu)} \breve \CK = \breve \CK t^{x(\ud \mu)} \breve \CK$.

Second, we prove that for any $x\in W_0$, $\overline{\breve\CK t^{x(\ud\mu)} \breve \CK}$ is an irreducible component of $\CA_K(\bG,\mu)$. Note that 
$$\CA_K(\bG,\mu) = \bigsqcup_{\<w\> \in W_K \backslash \Adm(\mu)_K / W_K } \breve\CK \dot{w} \breve\CK /\breve \CK.$$
Furthermore, the closure relation is given by the induced Bruhat order on $W_K \backslash \tW / W_K$. Therefore, we have
$$\mathrm{Irr}\CA_K(\bG,\mu) = \left\{ \overline{\breve\CK \dot{w} \breve\CK /\breve \CK} \mid \<w\> \text{ maximal in }W_K \backslash \Adm(\mu)_K / W_K\right\}.$$
It suffices to prove that for any $x\in W_0$, $\<t^{x(\ud\mu)}\>$ is maximal in $W_K \backslash \Adm(\mu)_K / W_K$. By Lemma \ref{lem:WK-conjugate}, there exists a (unique) $W_K$-conjugate of $t^{x(\ud\mu)}$ that lies in $\tW^K$. Without loss of generality, we may assume that $t^{x(\ud\mu)}\in\tW^K$. Suppose $t^{x(\ud\mu)}\le w'$ for some maximal element $w'$ in $\Adm(\mu)_K$. By Theorem \ref{thm:AdmmuK}, we can write $w' = w_1 w_2$ with $w_1 \in \Adm(\mu) \cap \tW^{K}$ and $w_2 \in W_K$. Since $t^{x(\ud\mu)}\in\tW^K$, $t^{x(\ud\mu)} \le w_1$. By Proposition \ref{prop:maxK}, we must have $w_1 = t^{x(\ud\mu)}$. Thus, we have $\<t^{x(\ud\mu)}\> = \<w_1\>=\<w_1w_2\> = \<w'\>$. This proves that $\overline{\breve\CK t^{x(\ud\mu)} \breve \CK}$ is an irreducible component and hence $\Xi$ is well-defined.

By definition of $\Adm(\mu)$, a maximal element in $W_K\backslash \Adm(\mu)_K/W_K$ must be of the form $\<t^{x(\ud\mu)}\>$ for some $x\in W_0$. The surjectivity of $\Xi$ then follows.


Finally, we prove that $\Xi$ is injective. Assume that $\overline{\breve \CK t^{x(\ud \mu)} \breve \CK/\breve \CK} = \overline{\breve \CK t^{x'(\ud \mu)} \breve \CK/\breve \CK}$ for some $x,x'\in W_0$. Then $\<t^{x(\ud \mu)}\> = \<t^{x'(\ud \mu)}\>$ in $W_K \backslash \tW / W_K$. By Lemma \ref{lem:WK-conjugate} again, we may assume that $t^{x'(\ud \mu)}$ lies in $\tW^K$. Then $t^{x'(\mu)} = u t^{x(\mu)} v$ for some $u,v \in W_K$. Hence $t^{x'(\ud\mu)}v^{-1}u^{-1} = t^{\pr(u)x(\ud\mu)}$. Since $t^{x'(\ud\mu)}\in\tW^{K}$ and $\ell(t^{x'(\ud\mu)}) = \ell(t^{\pr(u)x (\ud\mu)})$, we have $v^{-1}u^{-1} = 1$ and $x'(\ud\mu) = \pr(u)x(\ud\mu)$. Hence $[x'] = [x]$ in $W_{\pr(K)}\backslash W_0 / W_{I(\mu)}$. This proves the injectivity of $\Xi$.

The proof of the ``moreover" part of Theorem \ref{thm:irr} will be given in the next two subsections.

\subsection{Irreducibility}
We have shown that $\CA_K(\bG,\mu)$ is irreducible if and only if $W_{\pr(K)} \backslash W_0 / W_{I(\mu)}$ is a singleton, or equivalently, $W_0 = W_{\pr(K)} W_{I(\mu)}$. In particular, the property of whether $\CA_K(\bG,\mu)$ is irreducible depends only on the triple $(W_0,I(\mu),K)$. Note that if $K$ is special, then $W_{\pr(K)} = W_0$ and the condition holds automatically. 

By \cite[Corollary 3.4 (ii)]{Dyer93}, each coset of $W_{\pr(K)}$ in $W_0$ has a unique minimal element. Let ${}^{\pr(K)}W_0$ be the set of minimal representatives of the right cosets of $W_{\pr(K)}$. In particular, we have $W \cong W_{\pr(K)} \times  {}^{\pr(K)}W_0$. Set $\supp\left({}^{\pr(K)}W_0 \right) = \bigcup_{x\in {}^{\pr(K)}W_0 }\supp(x)$. Here, $\supp(x)$ is the set of $i\in \BS_0$ such that $s_i$ appears in some (or any) reduced expression of $x$.

\begin{lemma}\label{lem:supp}
Let $K$ be a spherical subset of $\tilde\BS$ and $J$ be a proper subset of $\BS_0$. Then $W_0 =  W_{\pr(K)}   W_{J}$ if and only if $ {}^{\pr(K)}W_0 \subseteq W_{J}$, or equivalently, $\supp\left({}^{\pr(K)}W_0 \right) \subseteq J$.
\end{lemma}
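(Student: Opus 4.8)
\textbf{Proof proposal for Lemma \ref{lem:supp}.}

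The plan is to exploit the parabolic factorization $W_0 = W_{\pr(K)} \cdot {}^{\pr(K)}W_0$, in which every $w \in W_0$ has a unique expression $w = u \cdot x$ with $u \in W_{\pr(K)}$, $x \in {}^{\pr(K)}W_0$, and $\ell(w) = \ell(u) + \ell(x)$. The equivalence of the three conditions will then come from comparing which elements of $W_0$ can be realized as products $W_{\pr(K)} W_J$.

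First I would prove the implication $W_0 = W_{\pr(K)} W_J \Rightarrow {}^{\pr(K)}W_0 \subseteq W_J$. Take $x \in {}^{\pr(K)}W_0$. By hypothesis we can write $x = u' y$ with $u' \in W_{\pr(K)}$ and $y \in W_J$. On the other hand, $x$ is its own minimal representative in the coset $W_{\pr(K)} x$, and $u' y$ also lies in this coset, so by uniqueness of the minimal representative and the factorization $W_0 \cong W_{\pr(K)} \times {}^{\pr(K)}W_0$ applied to $u' y$, the ${}^{\pr(K)}W_0$-component of $u' y$ must be $x$ itself. But the ${}^{\pr(K)}W_0$-component of $u' y = x$ has length $\ell(x) \le \ell(y)$ since writing $x = u' y$ with $\ell$-additivity would force $\ell(x) = \ell(u') + \ell(y')$ for the relevant piece; more directly, since $x \in W_{\pr(K)} y$ and $x$ is minimal in its own $W_{\pr(K)}$-coset, we get $x \le y$ in an appropriate sense. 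The cleanest route: $x \in {}^{\pr(K)}W_0$ means $s_i x > x$ for all $i$ with $s_i \in W_{\pr(K)}$; combined with $x = u'y$, an induction on $\ell(u')$ using the exchange condition shows $x \in W_J$. Conversely, the implication ${}^{\pr(K)}W_0 \subseteq W_J \Rightarrow W_0 = W_{\pr(K)} W_J$ is immediate: any $w \in W_0$ factors as $w = u x$ with $u \in W_{\pr(K)}$ and $x \in {}^{\pr(K)}W_0 \subseteq W_J$, so $w \in W_{\pr(K)} W_J$.

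For the final equivalence ${}^{\pr(K)}W_0 \subseteq W_J \iff \supp({}^{\pr(K)}W_0) \subseteq J$, the forward direction is trivial since $\supp(x) \subseteq J$ for every $x \in W_J$. For the reverse, if $\supp({}^{\pr(K)}W_0) \subseteq J$ then every $x \in {}^{\pr(K)}W_0$ has a reduced expression using only simple reflections $s_i$ with $i \in J$, hence $x \in W_J$.

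The main obstacle I anticipate is the careful handling of the first implication: one must verify that writing $x = u' y$ with $x$ minimal in $W_{\pr(K)} x$ genuinely forces $x \in W_J$, rather than merely $x$ lying in the same $W_{\pr(K)}$-coset as some element of $W_J$. The key point to nail down is that if $x \in {}^{\pr(K)}W_0$ and $x = u' y$ with $u' \in W_{\pr(K)}$, $y \in W_J$, then in fact $\ell(y) = \ell(u') + \ell(x)$ is impossible unless $u' = 1$ — wait, that is not quite right either, since $y$ need not be reduced in that form. The correct statement is: among all elements of the coset $W_{\pr(K)} x$, the element $x$ has minimal length, so $\ell(x) \le \ell(y)$; and since $y \in W_J$ and every reduced subword of $y$ lies in $W_J$, while $x$ arises from $y$ by left-multiplication by elements of $W_{\pr(K)}$ that only shorten, the standard subword property of the Bruhat order on Coxeter groups gives that $x$ is $\le y$ and hence (being obtained as a subword of a $W_J$-word) lies in $W_J$. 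Making this subword argument precise — perhaps by citing the relevant property of parabolic quotients, e.g. that the minimal coset representative of $W_{\pr(K)} y$ is obtained from a reduced word of $y$ by deleting letters — is the crux, and I would organize the write-up around that single fact.
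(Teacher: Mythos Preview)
Your proposal has a genuine gap: you repeatedly treat $W_{\pr(K)}$ as if it were a \emph{standard} parabolic subgroup of $W_0$, but it is only a reflection subgroup in Dyer's sense (it is generated by $\{s_{\alpha_i}: i\in K\}$, and for $i\in K\setminus\BS_0$ the reflection $s_{\alpha_i}=s_{\theta_i}$ is not simple). In particular, your opening claim that the factorization $w=u\cdot x$ satisfies $\ell(w)=\ell(u)+\ell(x)$ is \emph{false} in general---already for $W_0$ of type $A_2$ and $K=\{0\}$ one has $W_{\pr(K)}=\langle s_1s_2s_1\rangle$, and $s_1s_2=(s_1s_2s_1)\cdot s_1$ with $2\neq 3+1$. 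For the same reason, your appeal to ``the relevant property of parabolic quotients'' does not apply as stated; the standard parabolic machinery you invoke is unavailable here.

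That said, your final idea is sound and can be repaired. What you actually need is that the minimal-length element $x$ of a coset $W'y$ of a reflection subgroup $W'$ always satisfies $x\le y$ in the Bruhat order of $W_0$: by Dyer's characterization of minimal representatives, if $y$ is not minimal there is a canonical generator $t$ of $W'$ (a reflection in $W_0$) with $\ell(ty)<\ell(y)$, hence $ty<y$ in Bruhat order, and iterating produces a descending chain from $y$ to $x$. Then $x\le y\in W_J$ forces $\supp(x)\subseteq\supp(y)\subseteq J$, as you wanted. The paper takes a different route that avoids Bruhat-order considerations entirely: it sets $L=\Sigma_{\pr(K)}^+\cap\Sigma_J^+$, refactors $z\in W_J$ as $z=z_1z_2$ with $z_1\in W_L\subseteq W_{\pr(K)}$ and $z_2\in{}^L(W_J)$, and checks by a direct root-system argument that $z_2\in{}^{\pr(K)}W_0$; uniqueness of the decomposition $W_0=W_{\pr(K)}\cdot{}^{\pr(K)}W_0$ then forces $x=z_2\in W_J$. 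Your approach, once the reflection-subgroup subtlety is handled, is arguably more conceptual, while the paper's is self-contained and sidesteps the need to cite further properties from Dyer's theory.
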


\begin{proof}
The ``if" direction follows from the fact that $W_0 =W_{\pr(K)} {}^{\pr(K)}W_0 $. We now prove the ``only if" direction. Suppose $W_0 = W_{\pr(K)}   W_J$. Let $x \in {}^{\pr(K)}W_0$. We shall prove that $x\in W_J$. 

We introduce more notation. Let $\Sigma_{\pr(K)}$ be the root subsystem of $\Sigma$ spanned by $\{\a_i \mid i \in K\}$ (see \S\ref{sec:1.3} for the definition of $\a_i$) and let $\Sigma_{\pr(K)}^+ = \Sigma_{\pr(K)} \cap \Sigma^+$. Then ${}^{\pr(K)}W_0 = \{x\in W_0 \mid x^{-1}(\b)\in \Sigma^+ \text{ for any }\b\in \Sigma_{\pr(K)}^+\}$. Define $\Sigma_J^+$ similarly. Let $L = \Sigma_{\pr(K)}^+ \cap \Sigma_{J}^+$ and ${}^{L} (W_{J}) = \{z\in W_{J} \mid z^{-1}(\b)\in \Sigma^+ \text{ for any }\b\in L\}$. 

Since $W_0 =  W_{\pr(K)} W_{J}$, we can write $x = u z$ with $u \in W_{\pr(K)}$ and $z \in W_{J}$. Next, we can write $z = z_1 z_2$ with $z_1 \in W_L$ and $z_2 \in {}^{L} (W_{J})$. We claim $z_2 \in {}^{\pr(K)}W_0$. In fact, suppose $z_2^{-1}(\b) \in \Sigma^-$ for some $\b \in \Sigma_{\pr(K)}^+$. Since $z_2 \in W_{J}$, $\b$ must lie in $L$. It contradicts the fact that $z_2 \in {}^{L} (W_J)$. This proves the claim. Note that $x = u z_1 z_2$, $x, z_2 \in {}^{\pr(K)}W_0$ and $u, z_1 \in W_{\pr(K)}$. We must have $x = z_2 \in W_{J}$. This completes the proof.
\end{proof}

\subsection{Classification}
In this subsection, we classify all pairs $(W_0, K)$ where 

(*) $W_0$ is irreducible and $K$ is a non-special spherical subset such that $\supp\left({}^{\pr(K)}W_0 \right)$ is a proper subset of $\BS_0$.

We shall prove that $(W_0,K)$ satisfies (*) if and only if it is one of those listed in the following table.

\begin{center}\label{table2}
\begin{tabular}{ |c|c|c|} 
 \hline
 Type of $W_0$ & $K$ & $\supp\left({}^{\pr(K)}W_0 \right)$\\ 
 \hline
 $ B_n$ & $\tilde\BS-\{ n\}$  &    $   \{n\}$  \\
 \hline 
 $C_n$&  $\tilde\BS -\{i\}$ for some $i \ne 0,n$ &  $\{1,2,\ldots,n-1\}$\\
 \hline
 $F_4$&   $\tilde\BS - \{4\}$ &     $\{3,4\}$   \\
 \hline  
 $G_2$& $\tilde\BS - \{2\}$ &  $\{2\}$  \\
 \hline
\end{tabular}
\end{center}

Observe that in the above table, the set $\supp\left({}^{\pr(K)}W_0 \right)$ always equals the set of short roots. Hence, by Lemma \ref{lem:supp}, we conclude that if $W_0$ is irreducible and $\mu$ is noncentral, then $W_0 =  W_{\pr(K)}  W_{I(\mu)}$ if and only if (i) $K$ is special or (ii) $\supp\left({}^{\pr(K)}W_0 \right) = \text{set of short roots} \subseteq I(\mu) $. This proves the ``moreover" part of Theorem \ref{thm:irr} and Proposition \ref{prop:class}.

We now begin to prove the above classification result. Our strategy is as follows. 

Recall that $C^+$ is the dominant Weyl chamber of $W_0$ and $C_K = \{v\in V\mid \<v,\a_i\> > 0 \text{ for any }i\in K\}$. Note that $C_K$ is a Weyl chamber of the Weyl group $W_{\pr(K)}$. Let $C_K^+$ be the unique Weyl chamber of $W_{\pr(K)}$ that contains the dominant Weyl chamber $C^+$ of $W_0$. Then it is easy to see that 
\begin{align*}\tag{1.3}\label{eq:1.3}
    {}^{\pr(K)}W_0 = \{x\in W_0 \mid  x(C^+)\subseteq C_K^+ \}.
\end{align*}
In the picture below, we draw the regions $C_K$ and $C_K^{+}$ of type $C_2$ for certain $K$.

\begin{center}
\begin{tikzpicture}[scale=0.6]
    \coordinate (O) at (0,0);    
\coordinate (A) at (5,0);  
\coordinate (B) at (2.6,2.6);  
\coordinate (C) at (0,5);  
\coordinate (D) at (-2.6,2.6);  
\coordinate (E) at (-5,0);
\coordinate (F) at (-2.6,-2.6);
\coordinate (G) at (0,-5);
\coordinate (H) at (2.6,-2.6);
\coordinate (I) at (-5,5);
\coordinate (J) at (5,5);
\begin{scope}  
    \clip (C) -- (I) -- (E) -- (O) -- cycle;  
    \fill[gray!20] (O) circle (20cm); 
\end{scope}  
\begin{scope}  
    \clip (F) -- (G) -- (O) -- cycle;  
    \fill[gray!20] (O) circle (20cm); 
\end{scope}  
\begin{scope}  
    \clip (A) -- (J) -- (C) -- (O) -- cycle;  
    \fill[gray!60] (O) circle (20cm); 
\end{scope}  
\begin{scope}  
    \clip (A) -- (B) -- (O) -- cycle;  
    \fill[gray!90] (O) circle (20cm); 
\end{scope}  
\draw[->] (O) -- (A) node[above] {};  
\draw[->] (O) -- (B) node[right] {};  
\draw[->] (O) -- (C) node[above] {\large $\alpha_2$};  
\draw[->] (O) -- (D) node[right] {};  
\draw[->] (O) -- (E) node[left] {\large $\alpha_0$};  
\draw[->] (O) -- (F) node[right] {};  
\draw[->] (O) -- (G) node[right] {};  
\draw[->] (O) -- (H) node[below right] {\large $\alpha_1$}; 
\node[above right, font=\large] at (-1.8,-3.1) {$C_{\{0,1\}}$}; 
\node[ font=\large] at (-3.8,3.8) {$C_{\{0,2\}}$}; 
\node[ font=\large] at (3.8,3.8) {$C_{\{0,2\}}^+$}; 
\node[  font=\large] at (2.7,0.8) { $C^+ = C_{\{0,1\}}^+$}; 
\end{tikzpicture}  
\end{center}

We have the following two observations. 
\begin{enumerate}[(a)]
    \item Let $J$ be a proper subset of $\BS_0$. Then $\supp\left({}^{\pr(K)}W_0 \right) \subseteq J$ if and only if $C_{K}^+ \subseteq W_J \left(\overline{C^+}\right):= \bigcup_{x\in W_J}x\left(\overline{C^+}\right)$ (this follows from (\ref{eq:1.3})).
    \item if $\supp\left({}^{\pr(K)}W_0 \right) = \BS_0$, then $\supp\left({}^{\pr(K')}W_0 \right) = \BS_0$ for any $K' \subsetneq K$. 
\end{enumerate}

For exceptional types, we use the computer program to compute the set $\supp\left({}^{\pr(K)}W_0 \right)$ by brutal force. For classical types, we first give a specific description of the chamber $C_{K}^+$ case by case, then consider certain ``sufficiently non-dominant" elements in $C_K^+$. If these elements cannot be translated into $\overline{C^+}$ by elements with proper supports in $W_0$, then by $(a)$, we have $\supp\left({}^{\pr(K)}W_0 \right) =\BS_0$. Combined with $(b)$, we can rule out most of the cases where $\supp\left({}^{\pr(K)}W_0 \right) =\BS_0$ (see \S\ref{sec:1.7.1}--\S\ref{sec:1.7.4}). 

We use Bourbaki's notation. We write $\ud a$ (resp. $\ud a'$) for the sequence $(a_1,a_2,\ldots,a_n)$ (resp. $(a_1',a_2',\ldots,a_n')$).

\subsubsection{Type $A_{n-1}$ case ($n\ge2$)}\label{sec:1.7.1}
We have $X_*(T) = \bigoplus_{j=1}^{n} \BZ \e_j$, $\a_i = \e_i-\e_{i+1}$ for $i=1,2\ldots,n-1$ and $\a_0 = -\th = -\e_1+\e_n$. Note that all indices $i$ are special. Recall that we only consider non-special $K$.

Let $K = \tilde\BS - \{i_1,i_2 \}$ with $0\le i_1 < i_2 \le n-1$. If $i_1 = 0$, then ${}^{\pr(K)}(W_0) = {}^{ \BS_0-\{i_2\} }(W_0)$ and it is easy to see that $\supp\left({}^{\BS_0 - \{i_2\} }W_0 \right) = \BS_0$. Assume $i_1>0$. Then
\begin{align*}
    C_{K} = \bigl\{ \ud a\in \BZ^n \mid & a_1 > \cdots > a_{i_1}, a_{i_1+1} > \cdots > a_{i_2}, a_{i_2+1} > \cdots >a_n > a_{1}\}\\
    C_{K}^+ = \bigl\{ \ud a\in \BZ^n \mid & a_1 > \cdots > a_{i_1}, a_{i_1+1} > \cdots > a_{i_2}, a_1 > a_{i_2+1} > \cdots >a_n  \}.
\end{align*}
Take $\ud a  \in C_{K}^+$ with $ a_{i_2} > a_{1}$. It is easy to see that any $x\in W_0$ with $x(\ud a) \in \overline{C^+}$ must have support $\supseteq \{1,2,\ldots,i_2-1\}$. Take $\ud a'  \in C_{K}^+$ such that $ a'_{n} > a'_{i_1+1}$. It is easy to see that any $x\in W_0$ such that $x(\ud a') \in \overline{C^+}$ must have support $\supseteq \{ i_1+1,\ldots,n-1\}$. By $(a)$, we have $\supp\left({}^{\pr(\tilde\BS - \{i\})}W_0 \right) = \BS_0$ (here, the elements $\ud a$ and $\ud a'$ are ``sufficiently non-dominant").

By $(b)$, we conclude that there is no choice of $K$ such that $(W_0,K)$ satisfies condition (*).

\subsubsection{Type $B_n$ case ($n\ge2$)}\label{sec:1.7.2}
We have $X_*(T) = \bigoplus_{j=1}^{n} \BZ \e_j$, $\a_i = \e_i-\e_{i+1}$ for $i=1,2\ldots,n-1$, $\a_n = \e_n$ and $\a_0 = -\th = -\e_1-\e_2$. Note that the index $i$ is special if and only $i = 0,1$. 

Let $i\in\{2, 3, \ldots,n-1\}$. We have
\begin{align*}
    C_{\tilde\BS - \{i\}} = \bigl\{ \ud a\in \BZ^n \mid & a_1 > a_2> \cdots > a_i, a_1+a_2<0, a_{i+1} > a_{i+2}  > \cdots > a_n > 0 \bigr\};\\
    C_{\tilde\BS - \{i\}}^+ = \bigl\{ \ud a\in \BZ^n \mid & a_1 > a_2> \cdots > a_i, a_{i-1}+a_{i}>0, a_{i+1} > a_{i+2}  > \cdots > a_n > 0 \bigr\}.
\end{align*}
Take $\ud a  \in C_{\tilde\BS - \{i\}}^+$ such that $ a_i<0$ and $a_1<a_n$ (this kind of element is ``sufficiently non-dominant"). It is not difficult to see that there is no $x\in W_0$ with proper support such that $x(\ud a) \in \overline{C^+}$. By $(a)$, we have $\supp\left({}^{\pr(\tilde\BS - \{i\})}W_0 \right) = \BS_0$.

Let $i = n$. It is easy to see that for any $\ud a\in C_{\tilde\BS- \{i\}}^+$, there exists some $x\in W$ with support $\subseteq \{n\}$ such that $x(\ud a)\in \overline{C^+}$ (indeed, we only need to deal with the case where $a_n<0$). By $(a)$, $\supp\left({}^{\pr(\tilde\BS- \{i\})}W_0 \right) = \{n\}$ is proper. 

For $K = \tilde\BS-\{0,1\}$, $\tilde\BS-\{0,n\}$ or $\tilde\BS-\{1,n\}$, we can similarly prove $\supp\left({}^{\pr(\tilde\BS- \{i\})}W_0 \right) =\BS_0$. 

By $(b)$, we conclude that $(W_0,K)$ satisfies condition (*) if and only if $K = \tilde\BS - \{n\}$.

\subsubsection{Type $C_n$ case ($n\ge2$)}\label{sec:1.7.3}
We have $X_*(T) = \bigoplus_{j=1}^{n} \BZ \e_j$, $\a_i = \e_i-\e_{i+1}$ for $i=1,2\ldots,n-1$, $\a_n = 2\e_n$ and $\a_0 = -\th = -2\e_1$. Note that the index $i$ is special if and only if $i = 0,n$. 

Let $i\in\{1, 2, 3, \ldots,n-1\}$. We have
\begin{align*}
     C_{\tilde\BS - \{i\}} = \bigl\{ \ud a\in \BZ^n \mid&  0>a_1 > a_2 > \cdots > a_i, a_{i+1} > a_{i+2}> \cdots > a_n > 0 \bigr\};\\
     C_{\tilde\BS - \{i\}}^+ = \bigl\{ \ud a\in \BZ^n \mid&  a_1 > a_2  > \cdots > a_i > 0, a_{i+1} > a_{i+2}> \cdots > a_n > 0\bigr\}.
\end{align*}
It is easy to see that for any $\ud a\in C_{\tilde\BS - \{i\}}^+$, there exists some $x\in W$ with support $\subseteq  \{1,2,\ldots,n-1\}$ such that $x(\ud a)\in \overline{C^+}$. By $(a)$, $\supp\left({}^{\pr(\tilde\BS - \{i\})}W_0 \right) = \{1,2,\ldots,n-1\} $ is proper. 

Let $K = \tilde\BS - \{i_1,i_2 \}$ with $0\le i_1 < i_2 \le n$. We have
\begin{align*}
     C_{K}^+ = \bigl\{ \ud a\in \BZ^n \mid&  a_1 > a_2 > \cdots > a_{i_1}>0, a_{i_1+1} > \cdots > a_{i_2}, a_{i_2+1}> \cdots > a_n > 0 \bigr\}.
\end{align*}
Take $\ud a  \in C_{K}^+$ such that $ a_{i_1+1}<-a_1$ (this kind of elements is ``sufficiently non-dominant"). It is not hard to see that there is no $x\in W_0$ with proper support such that $x(\ud a) \in \overline{C^+}$. By $(a)$, we have $\supp\left({}^{\pr(K)}W_0 \right) = \BS_0$.

By $(b)$, we conclude that $(W_0,K)$ satisfies condition (*) if and only if $K = \tilde\BS - \{i\}$ for $i = 1,2,\ldots,n-1$.


\subsubsection{Type $D_n$ case ($n\ge4$)}\label{sec:1.7.4}
We have $X_*(T) = \bigoplus_{j=1}^{n} \BZ \e_j$, $\a_i = \e_i-\e_{i+1}$ for $i=1,2\ldots,n-1$, $\a_n = \e_{n-1} + \e_n$ and $\a_0 = -\th = -\e_1 -\e_2$. Note that the index $i$ is special if and only if $i = 0,1,n-1,n$.

Let $i\in\{  2, 3, \ldots,n-2\}$. We have
\begin{align*}
    C_{\tilde{\BS}-\{i\}} = \bigl\{ \ud a\in \BZ^n \mid& a_{1} > a_2>  \cdots > a_i  , a_1+a_2<0,\\
                                        & a_{i+1} > a_{i+2}>  \cdots > a_n,a_{n-1}+a_{n} >0  \bigr\};\\
    C_{\tilde{\BS}-\{i\}}^+ = \bigl\{ \ud a\in \BZ^n \mid& a_{1} > a_2> \cdots > a_i , a_{i-1}+a_i>0,\\
                                            &a_{i+1} > a_{i+2}>  \cdots > a_n,a_{n-1}+a_{n} >0 \bigr\}.
\end{align*}
Take $\ud a  \in C_{\tilde{\BS}-\{i\}}^+$ such that $ a_1<a_{n-1}$ and $a_i<a_n<0$ (this kind of element is ``sufficiently non-dominant"). It is not difficult to see that there is no $x\in W_0$ with proper support such that $x(\ud a) \in \overline{C^+}$. By $(a)$, we have $\supp\left({}^{\pr(\tilde{\BS}-\{i\})}W_0 \right) = \BS_0$. 

Let $K = \tilde\BS-\{n-1,n\}$. Take $\ud a  \in C_{K}^+$ such that $  a_{n } > a_1$. Then any $x\in W_0$ such that $x(\ud a) \in \overline{C^+}$ must have support $\supseteq \BS_0-\{n\}$. Take $\ud a  \in C_{K}^+$ such that $  a_{n } < - a_1$. Then any $x\in W_0$ such that $x(\ud a) \in \overline{C^+}$ must have support $\supseteq \BS_0-\{n-1\}$. By $(a)$, we have $\supp\left({}^{\pr(K)}W_0 \right) = \BS_0$. 

For $K = \tilde\BS - \{0,1\}$, $\tilde\BS - \{0,n-1\}$, $\tilde\BS - \{0,n\}$, $\tilde\BS - \{1,n-1\}$ or $\tilde\BS - \{1,n\}$, we can similarly prove that $\supp\left({}^{\pr(K)}W_0 \right) =\BS_0$. 

By $(b)$, we conclude that there is no choice of $K$ such that $(W_0,K)$ satisfies condition (*).

\section{Changing parahoric levels}\label{sec:2}
\subsection{Main result}\label{sec:2.1}
Let $K_1\subseteq K_2$ be two spherical subsets of $\tilde\BS$. Let $\breve \CK_1$ and $\breve \CK_2$ be the corresponding standard parahoric subgroup of $\breve G$. The natural projection $\breve G/\breve \CK_1\longrightarrow \breve G / \breve \CK_2$ induces 
$$\pi = \pi_{K_1,K_2}: \CA_{K_1}(\bG,\mu) \longrightarrow \CA_{K_2}(\bG,\mu).$$ 
The main result of this section is the following.
\begin{theorem}\label{thm:main}
For any $m\in \CA_{K_2}(\bG,\mu)$, the fiber $\pi^{-1}(m)$ isomorphic to a single Schubert variety in the partial flag variety $\breve \CK_2/\breve \CK_1$. In particular, each fiber is irreducible.      
\end{theorem}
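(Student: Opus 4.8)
The plan is to reduce, using the $\breve\CK_2$-equivariance of $\pi$, to computing a single ``standard'' fiber, then to identify that fiber explicitly as a closed union of Schubert cells inside the partial flag variety $\breve\CK_2/\breve\CK_1$, and finally to invoke Proposition~\ref{prop:max} to see this union has a unique maximal cell and hence is a single Schubert variety. To begin, the projection $\breve G/\breve\CK_1\to\breve G/\breve\CK_2$ is $\breve G$-equivariant, so $\pi$ is $\breve\CK_2$-equivariant and its fibers over points of a common $\breve\CK_2$-orbit are isomorphic; since $\CA_{K_2}(\bG,\mu)$ is a union of $\breve\CK_2$-orbits, it suffices to treat $m=\dot w\breve\CK_2$ where $w$ is the minimal-length representative of its coset in $W_{K_2}\backslash\tW/W_{K_2}$, so that $w\in{}^{K_2}\tW^{K_2}$. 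Moreover, because the double coset $W_{K_2}wW_{K_2}$ meets $\Adm(\mu)$, $w$ is its minimal element, and $\Adm(\mu)$ is downward closed for the Bruhat order, we automatically have $w\in\Adm(\mu)$.

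Next I would identify the fiber of $\breve G/\breve\CK_1\to\breve G/\breve\CK_2$ over $\dot w\breve\CK_2$ with $\breve\CK_2/\breve\CK_1$ via left translation by $\dot w$; under this identification $\pi^{-1}(m)$ becomes the closed subset $\{h\breve\CK_1\in\breve\CK_2/\breve\CK_1:\dot wh\breve\CK_1\in\CA_{K_1}(\bG,\mu)\}$. Writing $\breve\CK_2/\breve\CK_1=\bigsqcup_u\breve\CI\dot u\breve\CK_1/\breve\CK_1$ over the minimal representatives $u\in W_{K_2}\cap\tW^{K_1}$ of $W_{K_2}/W_{K_1}$, and using $w\in\tW^{K_2}$, I would verify that $\ell(wu)=\ell(w)+\ell(u)$, that $wu\in\tW^{K_1}$, and that $\dot w\,\breve\CI\dot u\breve\CK_1/\breve\CK_1\subseteq\breve\CI\dot{(wu)}\breve\CK_1/\breve\CK_1$ (from the identity $\breve\CI\dot w\breve\CI\dot u\breve\CI=\breve\CI\dot{(wu)}\breve\CI$). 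Since $\CA_{K_1}(\bG,\mu)$ is a union of $\breve\CI$-orbits, each cell of $\breve\CK_2/\breve\CK_1$ lies wholly inside or wholly outside $\pi^{-1}(m)$, and it lies inside precisely when $\breve\CI\dot{(wu)}\breve\CK_1/\breve\CK_1\subseteq\CA_{K_1}(\bG,\mu)$, i.e.\ when $wu\in\Adm(\mu)_{K_1}$, i.e.\ --- invoking Theorem~\ref{thm:AdmmuK} together with $wu\in\tW^{K_1}$ --- when $wu\in\Adm(\mu)$. Thus $\pi^{-1}(m)$ is isomorphic to the closed union of Schubert cells $\bigsqcup_{u\in A}\breve\CI\dot u\breve\CK_1/\breve\CK_1$, where $A=\{u\in W_{K_2}\cap\tW^{K_1}:wu\in\Adm(\mu)\}$; note $e\in A$, so the fiber is nonempty (and $\pi$ is surjective onto $\CA_{K_2}(\bG,\mu)$).

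It then remains to show that $A$ has a unique maximal element for the Bruhat order: once this is known, $\pi^{-1}(m)$ is the closure of that cell, hence a single Schubert variety, and in particular irreducible. Here I would argue as follows. Since $w\in\tW^{K_2}$, left multiplication by $w$ embeds $W_{K_2}$ order-preservingly into $\tW$; as $\Adm(\mu)$ is an order ideal of $\tW$, the set $B:=\{u\in W_{K_2}:wu\in\Adm(\mu)\}$ is an order ideal of $W_{K_2}$. By Proposition~\ref{prop:max}, applied to $w\in\Adm(\mu)$ with $K=K_2$, the set $wW_{K_2}\cap\Adm(\mu)$ has a unique maximal element $wu'$; hence $B$ has unique maximal element $u'$, so $B=\{u\in W_{K_2}:u\le u'\}$ and therefore $A=\{u\in W_{K_2}\cap\tW^{K_1}:u\le u'\}$. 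By the standard fact that a minimal representative $v$ of a coset of $W_{K_1}$ satisfying $v\le u'$ must satisfy $v\le(u')^{K_1}$, where $(u')^{K_1}$ denotes the minimal representative of $u'W_{K_1}$, this set equals $\{u\in W_{K_2}\cap\tW^{K_1}:u\le(u')^{K_1}\}$, whose unique maximal element is $(u')^{K_1}$.

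I expect the main obstacle to be Proposition~\ref{prop:max} itself. The geometric bookkeeping above is routine once one is careful about which side of each coset one works on, and the combinatorial inputs it uses --- that $\Adm(\mu)$ is a Bruhat order ideal, that $u\mapsto wu$ is an order embedding when $w\in\tW^{K_2}$, and the minimal-coset-representative lemma --- are standard. The genuinely new ingredient is the uniqueness of a maximal element in $wW_K\cap\Adm(\mu)$, whose proof will presumably draw on the fine combinatorial structure of the admissible set established in \cite{He16} and the acute cone criterion of \cite{HH17}. Granting Proposition~\ref{prop:max}, the reduction outlined here seems to be the natural --- indeed essentially the only --- route.
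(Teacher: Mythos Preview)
Your overall strategy is the same as the paper's --- identify the fiber as a union of Schubert cells indexed by $\{x\in W_{K_2}:wx\in\Adm(\mu)\}$ and then invoke Proposition~\ref{prop:max} --- and your second and third paragraphs are fine. The gap is in the first reduction step.

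You claim that $\pi$ is $\breve\CK_2$-equivariant and hence that fibers over a common $\breve\CK_2$-orbit are isomorphic. This is false: the full projection $\breve G/\breve\CK_1\to\breve G/\breve\CK_2$ is $\breve G$-equivariant, but its restriction $\pi$ has source $\CA_{K_1}(\bG,\mu)$, which is only $\breve\CK_1$-stable, not $\breve\CK_2$-stable. Concretely, in type $A_1$ with $K_1=\emptyset$, $K_2=\{1\}$, and $\ud\mu=\alpha_1^\vee$, one has $\Adm(\mu)=\{e,s_0,s_1,s_0s_1,s_1s_0\}$; the $\breve\CK_2$-orbit through $\dot s_0\breve\CK_2$ contains both the $\breve\CI$-orbit of $s_0$ (over which the fiber is $\BP^1$, since $s_0s_1\in\Adm(\mu)$) and the $\breve\CI$-orbit of $s_1s_0$ (over which the fiber is a point, since $s_1s_0s_1\notin\Adm(\mu)$). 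So reducing to one representative per $\breve\CK_2$-orbit genuinely misses cases.

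The fix is minor and is exactly what the paper does: use $\breve\CK_1$-equivariance instead, writing $m=k_1\dot w\breve\CK_2$ with $k_1\in\breve\CK_1$ and $w\in{}^{K_1}\tW^{K_2}$. Since $\CA_{K_1}(\bG,\mu)$ is $\breve\CK_1$-stable, fibers over a $\breve\CK_1$-orbit really are isomorphic. That $w\in\Adm(\mu)$ now requires Theorem~\ref{thm:AdmmuK} (as $w$ is only minimal in $W_{K_1}wW_{K_2}$, not in $W_{K_2}wW_{K_2}$), but the rest of your argument only uses $w\in\tW^{K_2}$, which still holds, so paragraphs two and three go through unchanged.
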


The strategy for proving Theorem \ref{thm:main} is as follows. In \S\ref{sec:2.2}, we first interpret the fiber of $\pi$ as a union of Schubert cells and then reduce it to Proposition \ref{prop:max}, which is a purely combinatorial statement. In \S\ref{sec:2.3}, we study the combinatorial property of the admissible set under a geometric point of view using the notions of alcoves, galleries, and acute cones, and finally prove Proposition \ref{prop:max} in \S\ref{sec:2.4}.

\subsection{Fiber as union of Schubert cells}\label{sec:2.2}
We keep the notation in Theorem \ref{thm:main}. Note that
\begin{align*}\tag{2.1}\label{eq:2.1}
\CA_{K_2}(\bG,\mu) =& \bigcup_{w\in\Adm(\mu)_{K_2}}\breve \CI \dot{w} \breve \CI/ \breve \CK_2 = \bigsqcup_{w \in {}^{K_1}\tW^{K_2}\cap \Adm(\mu)_{K_2}}\breve \CK_1 \dot{w} \breve \CK_2 / \breve \CK_2\\
                                        =& \bigsqcup_{w \in {}^{K_1}\tW^{K_2}\cap \Adm(\mu)} \breve \CK_1 \dot{w} \breve \CK_2 / \breve \CK_2.
\end{align*}
Here, the last equality follows from Theorem \ref{thm:AdmmuK}.

Let $m\in\CA_{K_2}(\bG,\mu)$. By (\ref{eq:2.1}), we may write $m$ as $k_1 \dot{w} \breve \CK_2/\breve \CK_2$ with $k_1\in\breve \CK_1$ and $w\in {}^{K_1}\tW^{K_2}\cap \Adm(\mu)$. By definition of $\pi$, we have
\begin{align*}
\pi^{-1}(k_1 \dot{w} \breve \CK_2/\breve \CK_2) & = \left( k_1 \dot{w} \breve \CK_2/\breve \CK_1\right)   \cap \CA_{K_1}(\bG,\mu)  \\
                            &\cong \bigl\{  k_2\in \breve \CK_2 \bigm| k_1 \dot{w} k_2 \in \bigcup_{w'\in\Adm(\mu)}\breve \CK_1 w' \breve \CK_1 \bigr\} /\breve \CK_1  \\
                            &= \bigcup_{x\in W_{K_2} } \bigl\{k_2\in \breve \CI x \breve \CI \bigm| k_1 \dot{w} k_2 \in \bigcup_{w'\in\Adm(\mu)_{K_1}}\breve \CI w' \breve \CI \bigr\} /\breve \CK_1  \\
                            & = \bigcup_{ \substack{  x\in W_{K_2}\\ wx \in \Adm(\mu)_{K_1} } }\breve \CI x \breve \CI /\breve \CK_1.
\end{align*}
Here, the last equality follows from the assumption that $w\in {}^{K_1}\tW^{K_2}$. By Theorem \ref{thm:AdmmuK}, we have $\Adm(\mu)_{K_1} = \Adm(\mu)W_{K_1}$. Hence, for any $x\in W_{K_2}$, $wx\in \Adm(\mu)_{K_1}$ if and only if $wxx_1\in \Adm(\mu)$ for some $x_1\in W_{K_1}$. Thus,
\begin{align*}\tag{2.2}\label{eq:2.2}
\pi^{-1}(k_1 \dot{w} \breve \CK_2/\breve \CK_2) & \cong \bigcup_{ \substack{  x\in W_{K_2}\\ wx \in \Adm(\mu) } } \bigcup_{x_1\in W_{K_1}} \breve \CI x x_1 \breve \CI /\breve \CK_1 =  \bigcup_{ \substack{  x\in W_{K_2}\\ wx \in \Adm(\mu) } }\breve \CI x \breve \CK_1 /\breve \CK_1.
\end{align*}
In particular, $\pi^{-1}(k_1 \dot{w} \breve \CK_2/\breve \CK_2)$ is isomorphic to a union of Schubert varieties in the partial flag $\breve \CK_2/\breve \CK_1$. Provided (\ref{eq:2.2}), to prove Theorem \ref{thm:main}, it suffices to prove the following purely combinatorial statement.
\begin{proposition}\label{prop:max}
Let $K$ be a spherical subset of $\tilde \BS$. Let $w\in \Adm(\mu) $. Then the set $w W_K \cap \Adm(\mu)$ contains a unique maximal element.
\end{proposition}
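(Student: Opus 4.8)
The plan is to prove Proposition~\ref{prop:max} by a geometric argument in the standard apartment, using the ``acute cone criterion'' of Haines and the first-named author. Recall that $w\in\Adm(\mu)$ precisely when the alcove $w\mathfrak{a}$ lies in the admissible region, and by \cite{HH17} this can be tested by a gallery/acute-cone condition: $w\in\Adm(\mu)$ if and only if for every $x\in W_0$ with $x^{-1}w$ dominant-directed (in a suitable sense), the alcove $w\mathfrak{a}$ is ``below'' $t^{x(\ud\mu)}$ along an acute-cone-directed gallery. The first step is to translate the question ``does $wW_K\cap\Adm(\mu)$ have a unique maximal element?'' into a statement about alcoves: the coset $wW_K\mathfrak{a}$ is the set of alcoves in the closure of a single face (a $\dim W_K$-dimensional polysimplex) corresponding to the parahoric $\breve\CK$, and we are asking whether, among those alcoves lying in $\Adm(\mu)$, there is one that dominates all the others in the Bruhat order.

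Second, I would reduce to the case $w\in{}^{\tilde\BS}\tW^{K}$... more precisely, replace $w$ by the minimal-length element $w_0$ of $wW_K$, so that $\ell(w_0 x)=\ell(w_0)+\ell(x)$ for all $x\in W_K$ and the Bruhat order on $w_0W_K$ is identified with the Bruhat order on $W_K$ (shifted up by $\ell(w_0)$). Then $w_0W_K\cap\Adm(\mu) = w_0\{x\in W_K : w_0x\in\Adm(\mu)\}$, and I claim the set $A_{w_0}:=\{x\in W_K : w_0x\in\Adm(\mu)\}$ is a \emph{lower set} in $(W_K,\le)$ that is moreover \emph{closed under the join of incomparable elements that both lie in it}, hence directed, hence has a unique maximal element (a finite directed poset with a top). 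The lower-set property is immediate from the fact that $\Adm(\mu)$ is closed under the Bruhat order. The real content is the closure under joins.

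Third, to establish that closure under joins, I would use Theorem~\ref{thm:AdmmuK} together with Proposition~\ref{prop:maxK}. The key observation is: an element $u\in\Adm(\mu)_K\cap\tW^K$ is $\le t^{x(\ud\mu)}$ for the unique $x$ with $t^{x(\ud\mu)}\in\tW^K$ in its $W_K$-orbit (by Proposition~\ref{prop:maxK}, the maximal elements of $\Adm(\mu)\cap\tW^K$ are exactly the $t^{x(\ud\mu)}$). So for $v\in\Adm(\mu)_K$ arbitrary, writing $v=v^Kv_K$ with $v^K\in\tW^K\cap\Adm(\mu)$, $v_K\in W_K$, membership in $\Adm(\mu)_K$ is governed by a single translation element $t^{x(v)(\ud\mu)}$ depending only on the $W_K$-coset class of $v$. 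Within the coset $w_0W_K$, all elements share the same $\tW^K$-component's $W_K$-orbit, so there is a single $t^{x(\ud\mu)}\in\tW^K$ such that $w_0W_K\cap\Adm(\mu)_K$ is controlled by $t^{x(\ud\mu)}$; then $w_0x\in\Adm(\mu)$ for $x\in W_K$ iff $w_0x\le t^{x(\ud\mu)} y$ for some $y\in W_K$, and using the length-additivity of $w_0$ with $W_K$ this becomes a Bruhat-order condition purely inside the finite Coxeter group $W_K$ against a fixed element — and the set of $x\in W_K$ with $w_0x\le (\text{fixed element})\cdot(\text{something in }W_K)$, equivalently $x\le_{W_K} (\text{the }W_K\text{-part of the projection})$, is an honest principal lower ideal, which trivially has a unique maximum. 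I should double-check the precise bookkeeping here: the subtlety is that $t^{x(\ud\mu)}$ need not lie in $w_0W_K$, so one must carefully intersect the principal lower ideal $\{v\le t^{x(\ud\mu)}\}$ with the coset $w_0W_K$ and argue the intersection is still a principal ideal in the Bruhat order on $w_0W_K$; this uses the fact (standard for Coxeter groups and parabolic cosets) that the restriction of Bruhat order to a coset $w_0W_K$ with $w_0$ of minimal length is isomorphic to $(W_K,\le)$ and that $\{v\in w_0W_K : v\le u\}$ for a fixed $u$ is a lower set, combined with a lifting/Deodhar-type lemma to see it has a top.

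The main obstacle I expect is exactly that last point: showing that intersecting the (globally) admissible set — or equivalently the union of principal ideals $\bigcup_x\{v\le t^{x(\ud\mu)}\}$ — with a single parabolic coset $w_0W_K$ yields a set with a unique maximal element, since $\Adm(\mu)$ is a union of many principal ideals and an intersection of a union of ideals with a coset need not be an ideal. The way around it is the deeper geometric input from \cite{HH17}: the acute-cone criterion says membership of an alcove $w_0x\mathfrak{a}$ in $\Adm(\mu)$ can be checked by walking a gallery from $w_0x\mathfrak{a}$ toward the ``origin'' staying in acute-cone-directed steps, and within the bounded polysimplex $\overline{w_0W_K\mathfrak{a}}$ this walk can be taken monotone, so that if $w_0x\mathfrak{a}$ and $w_0x'\mathfrak{a}$ are both admissible then so is the alcove in the coset closest to them in the ``admissible direction'', which forces directedness. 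I would formalize this by showing $A_{w_0}$ is closed under the operation $x\vee x'$ of taking the longest element below both complements appropriately — i.e. that $A_{w_0}$ is closed under the meet in $(W_K,\le)$ of the associated dominant directions — and then invoke that a nonempty finite sub-poset of $(W_K,\le)$ closed under the relevant join has a greatest element. Concretely I anticipate phrasing the final step via: $w_0x\in\Adm(\mu)\iff w_0x\le t^{x_0(\ud\mu)}$ where $x_0$ is the \emph{unique} Weyl element with $w_0x\cdot W_{I(\mu)}$-related to... and then \S\ref{sec:2.3}'s acute-cone machinery supplies the monotonicity. I will set up the alcove-and-gallery language in \S\ref{sec:2.3} precisely so that this last implication is a clean lemma rather than an ad hoc computation.
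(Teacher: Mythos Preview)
Your proposal has the right ingredients (the acute cone criterion from \cite{HH17}, reduction to a single translation, and a Deodhar-type lemma for the final step), but it is missing the one clean trick that makes the argument go through, and your third-paragraph approach via $\tW^K$-components does not work.

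Specifically: in your third paragraph you try to find a single controlling translation $t^{x(\ud\mu)}\in\tW^K$ using Proposition~\ref{prop:maxK}. But that proposition only bounds the $\tW^K$-part $w_0$ of the coset; there is no reason the \emph{longer} elements $w_0x$ (for $x\in W_K$ nontrivial) should satisfy $w_0x\le t^{x(\ud\mu)}$, since $t^{x(\ud\mu)}\in\tW^K$ is itself a minimal coset representative. Moreover, ``$w_0x\le t^{x(\ud\mu)}y$ for some $y\in W_K$'' only gives $w_0x\in\Adm(\mu)_K$, not $w_0x\in\Adm(\mu)$, and these differ for elements outside $\tW^K$. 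So this route does not produce the single bound you need, and your fallback plan of proving closure under joins in $A_{w_0}$ is left as a hope rather than an argument.

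The paper's proof fixes this with one observation you are circling around but never state: choose the acute-cone direction $z$ using the \emph{maximal} element $w''$ of the coset $wW_K$ (not the minimal one). Since $w''$ is maximal, for every $w'\in wW_K$ there is a minimal gallery $\mathfrak{a}\to w''(\mathfrak{a})$ passing through $w'(\mathfrak{a})$; by Lemma~\ref{lem:gallery} this gallery is in the $z$-direction, hence so is its initial segment, giving $w'(\mathfrak{a})\in\CC(\mathfrak{a},z)$. Thus the \emph{entire coset} $wW_K$ lies in a single acute cone. Now Theorem~\ref{thm:HH} gives $w'\le t^{z(\ud\mu)}$ for every $w'\in wW_K\cap\Adm(\mu)$, so in fact $wW_K\cap\Adm(\mu)=\{w'\in wW_K\mid w'\le t^{z(\ud\mu)}\}$, a single Bruhat interval intersected with a coset. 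Deodhar's theorem \cite[Theorem~2.2]{Deod87} then gives the unique maximum directly; no join-closure argument is needed.
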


\begin{remark}
    After the paper was finished, we showed it to Felix Schremmer. He pointed out that this proposition can also be proved using \cite[Theorem 4.2]{Sch24} and \cite[Lemma 2.13]{Sch23}. 
\end{remark}

\subsection{Acute cones}\label{sec:2.3}
The admissible set $\Adm(\mu)$ is a complicated combinatorial object. It is convenient to think of it from a geometric point of view, using the notions of alcoves, galleries, and acute cones, which we now briefly review. 

Recall that in \S\ref{sec:1.1}, by choosing a special vertex of the base alcove $\mathfrak{a}$, we identify the apartment $\CA$ with $V = X_*(T)_{\G_0}\otimes_{\BZ}\BR$. By definition, alcoves are connected components of $V - \bigcup_{\tilde{\a}}H_{\tilde{\a}}$, where $\tilde{\a}$ runs over the set of affine roots $\Phi_{\af} \cong \Sigma\times \BZ$ and $H_{\tilde{\a}}$ is the root hyperplane in $V$ corresponding to $\tilde{\a}$. Note that $\tW$ acts on the set of alcoves naturally. Although the action is not transitive in general, it is natural to view elements in $\tW$ as alcoves in $\CA$.

Let $\mathfrak{a}'$ and $\mathfrak{a}''$ be two alcoves. A gallery from $\mathfrak{a}'$ to $\mathfrak{a}''$ with length $r$ is a sequence of alcoves $\mathfrak{b}_0 = \mathfrak{a}', \mathfrak{b}_1, \mathfrak{b}_2, \ldots, \mathfrak{b}_r = \mathfrak{a}''$ such that $\mathfrak{b}_i$ and $\mathfrak{b}_{i+1}$ share a common wall for $i = 0,1,\ldots,r-1$. The gallery is said to be minimal if there is no gallery from $\mathfrak{a}'$ to $\mathfrak{a}''$ with a shorter length. Let $w$ be an element of the affine Weyl group $W_{\af}$. Then there is a natural bijection between the set of reduced expressions of $w$ and the set of minimal galleries from $\mathfrak{a}$ to $w(\mathfrak{a})$. That is, any reduced expression $w = s_1s_2\cdots s_r $ can be associated with the minimal gallery $\mathfrak{a}, s_1(\mathfrak{a}), s_{1}s_2(\mathfrak{a}),\ldots, s_1s_2\cdots s_r(\mathfrak{a})$.

We now introduce the notion of acute cones.
\begin{definition}[{\cite[\S2.5]{HH17}}, {\cite[\S5]{HN02}}]
Let $z\in W_0$.
\begin{enumerate}
    \item For any root hyperplane $H$ in $V$, define $H^{z+}$ as the connected component of $V - H$ that contains sufficiently deep alcoves in the Weyl chamber $z(C^+)$ (recall that $C^+$ is the dominant chamber). More explicitly, suppose that $H$ corresponds to the affine root $(\a,k)$ with $\a\in z(\Phi^+)$ (otherwise replace $\a$ with $-\a$), then $H^{z+} = \{v\in V \mid \<v,\a\> > -k\}$. 
    
    
    \item We say a gallery $\mathfrak{b}_0, \mathfrak{b}_1, \mathfrak{b}_2, \ldots, \mathfrak{b}_r$ is in the $z$-direction, if $\mathfrak{b}_i \in (H_i)^{z+}$ for $i = 1,2,\ldots,r$, where $H_i$ is the common wall of $\mathfrak{b}_{i-1}$ and $\mathfrak{b}_i$.
    
    \item Let $\CC(\mathfrak{a}, z)$ be the set of alcoves $\mathfrak{b}$ such that there exists a gallery from the base alcove $\mathfrak{a}$ to $\mathfrak{b}$ in the $z$-direction. We call $\CC(\mathfrak{a}, z)$ the acute cone in the $z$-direction.
\end{enumerate}
\end{definition}
By \cite[Corollary 5.6]{HN02}, any alcove lies in some (but may not be unique) acute cone. The following lemma provides a more specific description of the acute cones.

\begin{lemma}[{\cite[Lemma 5.3]{HN02}}]\label{lem:gallery}
Let $\mathfrak{a}'$ and $\mathfrak{a}''$ be two alcoves and $z\in W_0$. Then the following are equivalent:
\begin{enumerate}
    \item There exists a gallery from $\mathfrak{a}'$ to $\mathfrak{a}''$ in the $z$-direction;
    \item There exists a minimal gallery from $\mathfrak{a}'$ to $\mathfrak{a}''$ in the $z$-direction;
    \item All minimal galleries from $\mathfrak{a}'$ to $\mathfrak{a}''$ are in the $z$-direction;
    \item For all root hyperplane $H$ separating $\mathfrak{a}'$ and $\mathfrak{a}''$, we have $\mathfrak{a}'' \in H^{z+}$.
\end{enumerate}
\end{lemma}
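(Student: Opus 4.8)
\textbf{Proof proposal for Lemma \ref{lem:gallery}.}

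The plan is to prove the cycle of implications $(3)\Rightarrow(1)\Rightarrow(4)\Rightarrow(3)$ and then insert $(2)$ between them, since $(3)\Rightarrow(2)$ and $(2)\Rightarrow(1)$ are immediate (a minimal gallery is a gallery). The whole argument runs through the combinatorics of root hyperplanes separating the two alcoves, so the first step is to set up that dictionary: for a minimal gallery $\mathfrak{b}_0=\mathfrak{a}',\mathfrak{b}_1,\dots,\mathfrak{b}_r=\mathfrak{a}''$ with walls $H_1,\dots,H_r$, the $H_i$ are pairwise distinct and are exactly the root hyperplanes separating $\mathfrak{a}'$ from $\mathfrak{a}''$; moreover each $\mathfrak{b}_j$ for $j\ge i$ lies on the same side of $H_i$ as $\mathfrak{b}_i$, because crossing $H_i$ again would force a non-minimal gallery. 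This is the standard Tits/Bourbaki fact about reduced galleries in a Coxeter complex, so I would cite it rather than reprove it.

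Granting that, $(3)\Rightarrow(1)$ is trivial and $(1)\Rightarrow(4)$ goes as follows: given a gallery from $\mathfrak{a}'$ to $\mathfrak{a}''$ in the $z$-direction, pass (if necessary) to a minimal subgallery — here one uses the deletion property, that any gallery contains a minimal one between its endpoints obtained by removing back-and-forth crossings, and that removing such a crossing preserves the $z$-direction property for the surviving walls, since the $z$-side of a hyperplane is intrinsic to the hyperplane and the alcove, not to the gallery. So we may assume the gallery is minimal, with walls $H_1,\dots,H_r$ equal to the set of separating hyperplanes. For each such $H_i$, the alcove $\mathfrak{b}_i$ lies in $H_i^{z+}$ by hypothesis, and $\mathfrak{a}''=\mathfrak{b}_r$ lies on the same side of $H_i$ as $\mathfrak{b}_i$ by the previous paragraph; hence $\mathfrak{a}''\in H_i^{z+}$. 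This gives $(4)$.

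For $(4)\Rightarrow(3)$, take an arbitrary minimal gallery $\mathfrak{b}_0=\mathfrak{a}',\dots,\mathfrak{b}_r=\mathfrak{a}''$ with walls $H_1,\dots,H_r$, which again exhaust the separating hyperplanes. Fix $i$. By hypothesis $\mathfrak{a}''\in H_i^{z+}$, and since $\mathfrak{b}_i$ is on the same side of $H_i$ as $\mathfrak{b}_r=\mathfrak{a}''$, we get $\mathfrak{b}_i\in H_i^{z+}$; as $i$ was arbitrary, the gallery is in the $z$-direction. The key ingredient throughout is the elementary but crucial observation that in a minimal gallery each wall is crossed exactly once, so the endpoint $\mathfrak{a}''$ determines, for every separating hyperplane $H$, which side the post-crossing alcove sits on. I would present $H^{z+}$ concretely via the formula $H^{z+}=\{v\mid\langle v,\a\rangle>-k\}$ with $\a\in z(\Phi^+)$ from the definition, which makes ``$\mathfrak{b}_i$ and $\mathfrak{a}''$ on the same side of $H_i$'' literally the statement that a single linear inequality has the same sign on both alcoves.

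The main obstacle I anticipate is bookkeeping rather than conceptual: making precise and correctly invoking the ``each wall crossed exactly once, and subsequent alcoves stay on one side'' lemma for minimal galleries, and verifying that the operation of deleting a redundant pair of crossings from a non-minimal $z$-directed gallery leaves the result $z$-directed. Both are standard in the theory of buildings, so I would state them cleanly as a preliminary remark (with a reference to \cite{HN02} or Bourbaki) and keep the proof of the four-way equivalence short, essentially the three sign-tracking arguments above. One should also note at the outset that the set of root hyperplanes separating $\mathfrak{a}'$ and $\mathfrak{a}''$ is finite (it has size $\ell$, the gallery distance), so all unions and intersections over separating hyperplanes are finite and cause no trouble.
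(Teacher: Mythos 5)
The paper does not prove this lemma; it is cited directly from \cite[Lemma~5.3]{HN02}, so there is no internal proof to compare against. I will therefore just evaluate your argument on its own terms.

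The skeleton of your proof is right, but there is a genuine gap in the step $(1)\Rightarrow(4)$, precisely where you "pass to a minimal subgallery" via a deletion property. The gallery-shortening operation in a Coxeter complex is a \emph{folding}, not a deletion: if a hyperplane $H$ is crossed at steps $i<j$, one reflects the intermediate portion $\mathfrak{b}_i,\dots,\mathfrak{b}_{j-1}$ across $H$ and drops the two crossings. After folding, the post-crossing alcove at an intermediate step $k$ becomes $s_H(\mathfrak{b}_k)$ and the wall becomes $s_H(H_k)$. Since $s_H$ has no reason to send $(H_k)^{z+}$ to $(s_H H_k)^{z+}$, the $z$-direction condition is not automatically inherited by the shortened gallery. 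Your remark that "the $z$-side of a hyperplane is intrinsic to the hyperplane and the alcove, not to the gallery" is true, but it does not address this, because folding changes \emph{both} the hyperplane and the alcove at each intermediate crossing.

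The fix is cheaper than what you attempted: a gallery in the $z$-direction never crosses any hyperplane twice, so it is automatically minimal and there is nothing to shorten. Indeed, suppose a $z$-direction gallery $\mathfrak{b}_0,\dots,\mathfrak{b}_r$ crossed $H$ at steps $i<j$ with no crossing of $H$ in between. The $z$-condition at step $i$ gives $\mathfrak{b}_i\in H^{z+}$, hence $\mathfrak{b}_i,\dots,\mathfrak{b}_{j-1}\in H^{z+}$ and $\mathfrak{b}_{i-1},\mathfrak{b}_j\in H^{z-}$; but the $z$-condition at step $j$ gives $\mathfrak{b}_j\in H^{z+}$, a contradiction. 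Once you insert this observation, $(1)\Rightarrow(2)$ is immediate, the walls of your (now minimal) gallery are exactly the separating hyperplanes, and the rest of your sign-tracking argument for $(2)\Leftrightarrow(3)\Leftrightarrow(4)$ goes through cleanly.
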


From a geometric point of view, roughly speaking, the set $\Adm(\mu)$ is equal to the convex hull of $W_0(\mu)$. More precisely, $\Adm(\mu)$ corresponds to the set of alcoves at the intersection of the ``opposite obtuse cones" $B(t^{z(\ud\mu)}(\mathfrak{a}),z)$ for all $z\in W_0$ (see \cite{HN02} and \cite{HH17}).

One of the main difficulties in studying $\Adm(\mu)$ is that, for a given $w\in\Adm(\mu)$, it is hard to determine for which $z\in W_0$, we have $w\le t^{z(\ud\mu)}$. The following result by Haines and the first-named author provides a criterion for this using acute cones.
\begin{theorem}[{\cite[Corollary 4.4]{HH17}}]\label{thm:HH}
Let $w\in \Adm(\mu)$ and $z\in W_0$. Assume that $w(\mathfrak{a})\in \CC(\mathfrak{a},z)$. Then $w\le t^{z(\ud\mu)}$.
\end{theorem}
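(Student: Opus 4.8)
The plan is to deduce this from two ingredients recalled in the text: the description of $\Adm(\mu)$ as the intersection of the opposite obtuse cones $B(t^{z(\ud\mu)}(\mathfrak a),z)$, $z\in W_0$ (see \cite{HN02,HH17}), and the gallery criterion for acute and obtuse cones in Lemma \ref{lem:gallery}. Concretely, I would show that $w(\mathfrak a)$ lies on a \emph{minimal} gallery from the base alcove $\mathfrak a$ to $t^{z(\ud\mu)}(\mathfrak a)$. Once this is established, splitting such a gallery as the concatenation of a reduced gallery from $\mathfrak a$ to $w(\mathfrak a)$ with a reduced gallery from $w(\mathfrak a)$ to $t^{z(\ud\mu)}(\mathfrak a)$ exhibits a reduced word for $t^{z(\ud\mu)}$ having a reduced word for $w$ as an initial subword, and hence $w\le t^{z(\ud\mu)}$ by the subword property of the Bruhat order. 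All elements involved lie in a common coset of $W_{\af}$ (since $w$ is $\le t^{x(\ud\mu)}$ for some $x\in W_0$ and the $t^{x(\ud\mu)}$ are $W_0$-conjugate translations, hence congruent modulo $W_{\af}$), so this alcove combinatorics makes sense.

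To produce the minimal gallery, I would argue as follows. By hypothesis $w(\mathfrak a)\in\CC(\mathfrak a,z)$, so by Lemma \ref{lem:gallery} there is a minimal gallery $\Gamma_1$ from $\mathfrak a$ to $w(\mathfrak a)$ in the $z$-direction. Since $w\in\Adm(\mu)$, the obtuse cone description gives $w(\mathfrak a)\in B(t^{z(\ud\mu)}(\mathfrak a),z)$, i.e.\ there is a gallery from $w(\mathfrak a)$ to $t^{z(\ud\mu)}(\mathfrak a)$ in the $z$-direction, which by Lemma \ref{lem:gallery} may be taken minimal; call it $\Gamma_2$. I then claim the concatenation $\Gamma_1\Gamma_2$ is again minimal, equivalently that no root hyperplane is crossed twice by it. Indeed, a hyperplane $H$ crossed twice would separate both the pair $(\mathfrak a, w(\mathfrak a))$ and the pair $(w(\mathfrak a), t^{z(\ud\mu)}(\mathfrak a))$; by Lemma \ref{lem:gallery}(4) the first separation forces $w(\mathfrak a)\in H^{z+}$, while the second forces $t^{z(\ud\mu)}(\mathfrak a)\in H^{z+}$ and hence $w(\mathfrak a)\in H^{z-}$ — impossible, as $H^{z+}$ and $H^{z-}$ are the two components of $V-H$. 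So $\Gamma_1\Gamma_2$ is a minimal gallery from $\mathfrak a$ to $t^{z(\ud\mu)}(\mathfrak a)$ passing through $w(\mathfrak a)$, and one concludes as above.

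I expect the genuinely substantial ingredient to be the obtuse cone description $\Adm(\mu)\subseteq\bigcap_{z\in W_0}B(t^{z(\ud\mu)}(\mathfrak a),z)$ itself, which is the nontrivial geometric input on the admissible set; here I take it as known from \cite{HN02,HH17}. Granting it, the only remaining point requiring care is the sign analysis above, showing that the ``from below'' condition $w(\mathfrak a)\in H^{z+}$ and the ``from above'' condition $w(\mathfrak a)\in H^{z-}$ cannot hold for the same $H$, so that the two minimal galleries glue to a minimal one; everything else — the reduction to a single $W_{\af}$-coset, the passage between reduced words and minimal galleries, and the subword property — is routine.
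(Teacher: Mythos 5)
Your proof is correct. Note, however, that the paper does not actually prove this result --- it is stated purely as a citation of \cite[Corollary 4.4]{HH17}, and the only trace of an argument is a sketch that is commented out in the source. Your reconstruction follows the same geometric route as the cited reference: you take as input the obtuse-cone description of $\Adm(\mu)$ from \cite{HN02,HH17} (correctly identifying it as the genuinely substantial ingredient), combine it with the acute-cone hypothesis $w(\mathfrak a)\in\CC(\mathfrak a,z)$ via Lemma~\ref{lem:gallery}, and glue the two minimal galleries. The sign analysis you carry out --- a doubly crossed hyperplane $H$ would force $w(\mathfrak a)$ to lie simultaneously in $H^{z+}$ (from the first minimal gallery in the $z$-direction) and $H^{z-}$ (from the second) --- is exactly the right observation, and it gives a clean direct proof of the minimality of the concatenation, which the commented-out sketch instead outsources to \cite[Proposition~3.5]{HH17} or \cite[Proposition~6.1]{HN02}. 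Your remark that $w$ and $t^{z(\ud\mu)}$ lie in a common $W_{\af}$-coset (so that alcove galleries and the subword criterion for the Bruhat order make sense) is also a necessary and correctly handled point. In short: correct, and substantively the argument from \cite{HH17}, with one black-box step replaced by a short direct verification.
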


\subsection{Proof of Proposition \ref{prop:max}}\label{sec:2.4}


Let $K$ be a spherical subset of $\tilde \BS$ and $w\in \Adm(\mu)$. We shall prove that $wW_K\cap\Adm(\mu)$ contains a unique maximal element. The key to the proof is to show that

(a) there exists $z\in W_0$ such that the acute cone $\CC(\mathfrak{a},z)$ contains the whole left coset $wW_K$, i.e., $w'(\mathfrak{a})\in \CC(\mathfrak{a},z)$ for all $w'\in wW_K$.

Let $w''$ be the maximal element in the left coset $wW_K$. By \cite[Corollary 5.6]{HN02}, there exists $z\in W_0$ such that $w''(\mathfrak{a})\in \CC(\mathfrak{a},z)$. Let $w'\in wW_K$. As $w''$ is maximal in $wW_K$, there exist $i_1, i_2, \ldots, i_r \in K$ such that $w'' = w' s_{i_1}s_{i_2}\cdots s_{i_r}$ and $\ell(w'') = \ell(w') + r$. Hence, there exists a minimal gallery from $\mathfrak{a}$ to $w''(\mathfrak{a})$ which passes through $w'(\mathfrak{a})$. Since $w''\in\CC(\mathfrak{a},z)$, by Lemma \ref{lem:gallery}, the gallery is in the $z$-direction. Then we obtain a (minimal) gallery from $\mathfrak{a}$ to $w'(\mathfrak{a})$ which is also in the $z$-direction. Hence $w'(\mathfrak{a})\in \CC(\mathfrak{a},z)$ by definition. This proves (a).

By Theorem \ref{thm:HH} and (a), for any $w' \in wW_K \cap \Adm(\mu) $, we have $w' \le t^{z(\ud \mu)}$. In other words, we have $wW_K\cap \Adm(\mu) = \{w'\in wW_K\mid w'\le t^{z(\ud \mu)}\}$. By \cite[Theorem 2.2]{Deod87}, the set $ \{w'\in wW_K\mid w'\le t^{z(\ud \mu)}\}$ contains a unique maximal element, and so does $wW_K\cap \Adm(\mu)$. This proves Proposition \ref{prop:max}.

\section{The hyperspecial case}\label{sec:3}
A natural question is how to describe the element $\max(w W_{K} \cap  \Adm(\mu) )$ in Proposition \ref{prop:max}. The question is difficult in general. In this section, we consider the case where $K =   \BS_0$.

We point out the following left-right symmetry. For any $w\in \Adm(\mu)$, set $w^*=w^{-1}$ and $\mu^* = -w_0(\mu)$ where $w_0$ is the longest element in $W_0$. Then the inverse automorphism sends $ \Adm(\mu)$ to $  \Adm(\mu^*)$ and gives a bijection
\begin{align*}\tag{3.1}\label{eq:3.1}
w W_0 \cap \Adm(\mu) \tilde{\longrightarrow}  W_0 w^* \cap \Adm(\mu^*).
\end{align*}
As we will see later, the notations become simpler once we consider the right coset version. In this section, we shall study the right coset version instead of the original left coset version. 

The main result of this section is Theorem \ref{thm:algorithm} which provides an efficient and explicit algorithm to compute the element $\max\{x\in W_0 \mid xw \in \Adm(\mu)\}$ for $w\in {}^{\BS_0}\tW $. The major tools we use are the quantum Bruhat graph and the Damazure product. 

\subsection{Quantum Bruhat graph}\label{sec:3.1}

The quantum Bruhat graph $\mathrm{QBG}(\Sigma)$ is a directed graph, whose set of vertices is the Weyl group $W_0$ and whose edges are of the form $x\rightarrow xs_\a$ for $x\in W_0$ and $\a\in \Sigma^+$ whenever one of the following conditions is satisfied:
\begin{enumerate}

\item $\ell(xs_\a) = \ell(x)+1$ or
\item $\ell(xs_\a) = \ell(x)+1-\langle \a^\vee,2\rho\rangle$, where $\a^{\vee}$ is the coroot corresponding to $\a$.
\end{enumerate}
The edges satisfying the first condition are called \emph{Bruhat} edges and the edges satisfying the second condition are called \emph{quantum} edges. The weight of each Bruhat edge is defined to be zero. The weight of a quantum edge $x\rightarrow xs_\a$ is defined as the coroot $\a^{\vee}$. The weight of a path is defined as the sum of the weights of all edges of the path. It is easy to see that $\text{QBG}(\Sigma)$ is path-connected. 


We recall some basic properties of the quantum Bruhat graph.
\begin{lemma}[{\cite[Lemma 1]{Pos05}}]\label{lem:Pos}
Let $x,x'\in W_0$.
\begin{enumerate}
    \item All shortest paths from $x$ to $x'$ in $\mathrm{QBG}(\Sigma)$ have the same weight. We denote this weight by $\wt(x, x')$.
    \item Any path from $x$ to $x'$ has weight $\ge$ $\wt(x,x')$.
    \item $\wt(x ,x') = 0$ if and only if $x \le x'$.
\end{enumerate}
\end{lemma}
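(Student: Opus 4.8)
The plan is to deduce (3) formally from (1) together with two elementary observations, and to obtain (1) and (2) from a confluence (``diamond'') argument for shortest paths in $\mathrm{QBG}(\Sigma)$. The two observations are these. First, the weight of every edge is either $0$ (on a Bruhat edge) or a positive coroot (on a quantum edge), and every positive coroot is a nonnegative integral combination of the simple coroots; hence the weight of any path from $x$ to $x'$ lies in the pointed submonoid $Q^\vee_{\ge 0}=\sum_i\BZ_{\ge 0}\,\alpha_i^\vee$ of the coroot lattice. In particular $\wt(p)\ge 0$ in the dominance order for every path $p$, and $\wt(p)=0$ exactly when $p$ uses no quantum edge. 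Second, comparing the two defining length conditions shows that a path $p$ from $x$ to $x'$ with $r$ edges satisfies
\[
r=\ell(x')-\ell(x)+\langle\wt(p),2\rho\rangle .
\]
Since $\langle\alpha_i^\vee,2\rho\rangle=2>0$, this identity shows that $p$ is shortest precisely when $\langle\wt(p),2\rho\rangle$ is minimal, and it reduces (1) and (2) to the single claim that among the weights of all paths from $x$ to $x'$ there is a dominance-minimum, attained by every shortest path.

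Granting this claim, part (3) is immediate. If $x\le x'$ in the Bruhat order, choose a saturated chain $x=x_0\lessdot x_1\lessdot\cdots\lessdot x_k=x'$; each covering relation is, on the right, multiplication by a reflection $s_\beta$ with $\ell(x_js_\beta)=\ell(x_j)+1$, i.e.\ a Bruhat edge, so the chain is a path of weight $0$ and hence $\wt(x,x')=0$ by the first observation (the monoid $Q^\vee_{\ge 0}$ is pointed). Conversely, if $\wt(x,x')=0$ then a shortest path has weight $0$, so, again because $Q^\vee_{\ge 0}$ is pointed and each quantum edge contributes a nonzero element of it, the path consists of Bruhat edges only; each such edge $y\to ys_\beta$ has $\ell(ys_\beta)>\ell(y)$, whence $y<ys_\beta$ in the Bruhat order, and composing along the path gives $x\le x'$.

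It remains to prove the claim stated at the end of the first paragraph, and this is the heart of the matter. I would argue by induction on the common length $d$ of the shortest paths from $x$ to $x'$, the crucial point being a local confluence: given two distinct edges $x\to y_1$ and $x\to y_2$, each beginning some shortest path to $x'$, one must produce a vertex $z$ together with shortest paths $y_1\to z$ and $y_2\to z$ such that
\[
\omega(x\to y_1)+\wt(y_1,z)=\omega(x\to y_2)+\wt(y_2,z),
\]
where $\omega$ denotes the weight of an edge. Iterating such confluences over the breadth-first tree of shortest paths (a Newman-style diamond argument) forces any two shortest $x\to x'$ paths to have equal total weight, which is (1); part (2) is folded into the same induction, since by the length identity a longer path from $x$ to $x'$ carries strictly more quantum weight, and the confluence lets one compare its weight with the geodesic weight modulo $Q^\vee_{\ge 0}$. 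Constructing the confluence vertex $z$ is where the genuine combinatorics of $\mathrm{QBG}(\Sigma)$ enters: one must understand how a shortest path transforms under multiplication by a simple reflection on the left, which comes down to playing the Bruhat and quantum length conditions against the strong exchange condition in $W_0$. I expect this case analysis — essentially, the statement that a simple reflection sends shortest paths to shortest paths with a controlled change of weight — to be the main obstacle; everything else is routine bookkeeping with the pointed cone $Q^\vee_{\ge 0}$ and the length identity.

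If the direct diamond argument turns out to be unwieldy, an alternative is to identify $\wt(x,x')$ with the least total degree of the quantum parameters occurring in the quantum product $\sigma^{x}\star\sigma^{w_0x'}$ in $QH^\ast(G/B)$ and to deduce (1)--(3) from the associativity of the quantum product together with the effectivity of the quantum Schubert structure constants; I would fall back on this route if needed.
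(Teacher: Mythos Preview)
The paper does not prove this lemma; it is quoted verbatim as \cite[Lemma 1]{Pos05} and used as a black box. So there is no in-paper argument to compare your proposal against.

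On your sketch itself: the length identity $r=\ell(x')-\ell(x)+\langle\wt(p),2\rho\rangle$ is correct and is indeed the backbone of any proof, and your derivation of (3) from (1) is clean. However, your reduction of (2) to the length identity has a gap: knowing that a non-geodesic path $p$ has strictly larger $\langle\wt(p),2\rho\rangle$ only says its weight is larger in the grading by height, not that $\wt(p)\ge\wt(x,x')$ in the dominance order on $Q^\vee$. Two incomparable elements of $Q^\vee_{\ge 0}$ can have different heights, so the extra sentence ``the confluence lets one compare its weight with the geodesic weight modulo $Q^\vee_{\ge 0}$'' is doing real work that you have not supplied. Likewise, the diamond step for (1) is asserted rather than proved; the actual case analysis (how left multiplication by a simple reflection interacts with Bruhat versus quantum edges) is exactly the content of the Brenti--Fomin--Postnikov ``tilted Bruhat order'' machinery that underlies Postnikov's original argument, and it is not routine. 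Your fallback route through $QH^\ast(G/B)$ is in fact Postnikov's own approach, so if you pursue it you are reproducing the cited proof rather than giving an independent one.
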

By definition of the weight function, we have 
\begin{align*}\tag{3.2}\label{eq:3.2}
\wt(x,x'') \le \wt(x,x')+ \wt(x',x'') \text{ for any } x,x',x''\in W.
\end{align*}

The following is the quantum Bruhat graph of type $A_2$ (the Bruhat edges are drawn as solid lines and the quantum edges are drawn as dashed lines):

\def\qecolor{dashed}
\def\seshift{0.5ex}
\def\quantumEdge{dashed}
\def\shortEdgeShiftRight{0.5ex}
\begin{align*}
\begin{tikzcd}[ampersand replacement=\&,column sep=2em]
\&s_1 s_2 s_1\ar[ddd,\qecolor]
\ar[dl,\qecolor,shift right=\seshift]\ar[dr,\qecolor,shift left=\seshift]\\
s_1 s_2\ar[ur,shift right=\seshift]\ar[d,\qecolor,shift right=\seshift]\&\&
s_2 s_1\ar[ul,shift left=\seshift]\ar[d,\qecolor,shift left=\seshift]\\
s_1\ar[u,shift right=\seshift]\ar[urr]\ar[dr,\qecolor,shift right=\seshift]\&\&s_2\ar[u,shift left=\seshift]\ar[ull]\ar[dl,\qecolor,shift left=\seshift]\\
\&1\ar[ru,shift left=\seshift]\ar[lu,shift right=\seshift]
\end{tikzcd}
\end{align*}

We have the following description of the admissible set, which was first established in \cite{HY} and later generalized in \cite{Sch24}. The $\l = \mu$ case is first proved in \cite{HL15}.
\begin{proposition}[{\cite[Proposition 3.3]{HY}} and {\cite[Proposition 4.12]{Sch24}}]\label{prop:qbg}
Let $\l\in X_*(T)^+$ and $I(\l) = \{i\in\BS_0 \mid \<\ud\l,\a_i\>=0\}$. Then 
$$\Adm(\mu)\cap W_0 t^{\ud\l }W_0 = \{x  t^{\ud\l } y  \mid x \in W_0 , y  \in {}^{I(\l )}W_0, \wt(x , y^{-1}) \le \ud\mu -\ud\l  \}.$$
In particular, we have $$\Adm(\mu)\cap W_0 t^{\ud\mu }W_0 = \{x  t^{\ud\mu } y  \mid x \in W_0 , y  \in {}^{I(\mu )}W_0, x\le y^{-1}\}.$$
\end{proposition}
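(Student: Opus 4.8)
The plan is to combine three ingredients: the normal form for $W_0 t^{\ud\l}W_0$, the $\l=\mu$ case of the statement (due to \cite{HL15}), and the dictionary between the Bruhat order on $\tW$ and paths in $\mathrm{QBG}(\Sigma)$ furnished by the acute cone/gallery calculus of \cite{HN02,HH17} together with the ``quantum $=$ affine'' correspondence (in the spirit of \cite{BM15}). First I would record the normal form: since $\l$ is dominant, $W_{I(\l)}=\mathrm{Stab}_{W_0}(\ud\l)$ and $w\,t^{\ud\l}=t^{\ud\l}\,w$ for $w\in W_{I(\l)}$, so every element of $W_0 t^{\ud\l}W_0$ has a unique expression $xt^{\ud\l}y$ with $x\in W_0$ and $y\in{}^{I(\l)}W_0$ (if $xt^{\ud\l}y=x't^{\ud\l}y'$, comparing translation parts gives $x^{-1}x'\in W_{I(\l)}$ and then the finite parts give $y\in W_{I(\l)}y'$, so $y=y'$ by minimality and $x=x'$). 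I would also observe that the ``in particular'' clause is a formal consequence: every edge weight of $\mathrm{QBG}(\Sigma)$ is $0$ or a positive coroot, so $\wt(x,y^{-1})$ is a nonnegative integral combination of positive coroots and is therefore $\ge 0$ in the dominance order; for $\l=\mu$ the condition $\wt(x,y^{-1})\le 0$ then forces $\wt(x,y^{-1})=0$, which by Lemma \ref{lem:Pos}(3) is equivalent to $x\le y^{-1}$.

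For the main identity I would prove the two inclusions, using $\Adm(\mu)=\bigcup_{z\in W_0}\{w\in\tW\mid w\le t^{z(\ud\mu)}\}$. For ``$\supseteq$'': given $x\in W_0$ and $y\in{}^{I(\l)}W_0$ with $\wt(x,y^{-1})\le\ud\mu-\ud\l$, fix a shortest $\mathrm{QBG}(\Sigma)$-path $x=u_0\to\cdots\to u_r=y^{-1}$, whose quantum edges carry coroots summing to $\wt(x,y^{-1})$. I would lift this path to a chain in the Bruhat order of $\tW$ witnessing $xt^{\ud\l}y\le t^{z(\ud\mu)}$ for a $z$ read off from $y$ and the path: a Bruhat edge lifts to a length-additive multiplication by a reflection, a quantum edge of weight $\beta^\vee$ lifts to a move that adds $\beta^\vee$ to the translation part while preserving $\le$, and the residual dominant deficit $\ud\mu-\ud\l-\wt(x,y^{-1})$ is absorbed using the monotonicity $\Adm(\l')\subseteq\Adm(\mu)$ for dominant $\l'\le\mu$ and the Kottwitz--Rapoport description of the translation elements of the admissible set. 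The ``all-Bruhat, zero-deficit'' base case $\ud\l=\ud\mu$ with $x\le y^{-1}$ is exactly \cite{HL15}.

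For ``$\subseteq$'': let $w=xt^{\ud\l}y\in\Adm(\mu)$ with $y\in{}^{I(\l)}W_0$, and choose $z\in W_0$ with $w\le t^{z(\ud\mu)}$. Using the description of $\Adm(\mu)$ as an intersection of opposite obtuse cones (\cite{HH17}), both $w(\fka)$ and $t^{z(\ud\mu)}(\fka)$ lie in the acute cone $\CC(\fka,z)$, so by Lemma \ref{lem:gallery} every minimal gallery between them is in the $z$-direction. Tracking the finite (Weyl-group) parts of the wall-crossings of such a gallery produces a $\mathrm{QBG}(\Sigma)$-path from a $W_{I(\l)}$-conjugate of $x$ to a conjugate of $y^{-1}$ whose weight equals $\ud\mu-\ud\l$ minus a dominant element; by Lemma \ref{lem:Pos}(2) this yields $\wt(x,y^{-1})\le\ud\mu-\ud\l$. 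The hypothesis $y\in{}^{I(\l)}W_0$ is essential here: it makes the translation normal form the correct one, so that the wall-crossing count reproduces the quantum weight exactly rather than over-counting at the non-regular walls $H_{\tilde\a_i}$, $i\in I(\l)$.

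The hard part is exactly this dictionary at the non-regular walls. When $\ud\l$ is superregular (deep inside $C^+$) the matching between affine Bruhat chains and $\mathrm{QBG}(\Sigma)$-paths is classical and the bookkeeping is transparent; the genuine subtlety is to run it when $\ud\l$ lies on walls of $C^+$, where the identifications in $W_0 t^{\ud\l}W_0$ coming from $W_{I(\l)}$ must be matched with the quantum edges, and one must verify that the deficit $\ud\mu-\ud\l$ is accounted for exactly by the quantum-edge weights. This analysis is carried out in \cite[Proposition 3.3]{HY} and \cite[Proposition 4.12]{Sch24}, whose treatment I would follow; a convenient way to organize it is to degenerate from the superregular case, moving $\ud\l$ toward its walls and tracking how the minimal galleries and their $\mathrm{QBG}(\Sigma)$-shadows specialize.
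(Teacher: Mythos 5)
The paper does not prove Proposition~\ref{prop:qbg}; it imports it, as the bracketed attribution to \cite[Proposition 3.3]{HY} and \cite[Proposition 4.12]{Sch24} in the statement indicates, and no \texttt{proof} environment follows. So there is no internal argument to compare your proposal against. Your sketch has the right general shape: the normal-form uniqueness for $W_0 t^{\ud\l}W_0$ (using $W_{I(\l)}=\mathrm{Stab}_{W_0}(\ud\l)$ and $y\in{}^{I(\l)}W_0$) is correct; the ``in particular'' deduction is correct and formal, since quantum edge weights are nonnegative coroots, so $\wt(x,y^{-1})\le 0$ forces $\wt(x,y^{-1})=0$, and Lemma~\ref{lem:Pos}(3) converts this to $x\le y^{-1}$; and the idea of converting between affine Bruhat chains and $\mathrm{QBG}(\Sigma)$-paths via galleries and acute cones is indeed the mechanism that \cite{HY} and \cite{Sch24} exploit. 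However, you explicitly defer the genuinely delicate step --- matching the quantum weights with the translation deficit $\ud\mu-\ud\l$ at the non-regular walls $H_{\tilde\a_i}$, $i\in I(\l)$ --- to the very references the paper cites. That makes your proposal an accurate road map rather than a proof. For a self-contained argument, one would have to carry out the superregular-to-degenerate analysis you gesture at (or, following Schremmer, deduce the general $\l$ case by a careful length/weight bookkeeping as in \cite[Proposition 4.12]{Sch24}), and verify, not merely assert, that a minimal gallery in direction $z$ from $\mathfrak{a}$ to $w(\mathfrak{a})$ has QBG-shadow with weight exactly $\ud\mu-\ud\l$ minus a dominant correction. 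Since the paper itself treats this as a black box, your decision to do the same is defensible, but you should be clear that nothing new is being proved here beyond what is cited.
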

Note that elements in ${}^{\BS_0}\tW $ are of the form $w = t^{\ud\l }y$ with $\l \in X_*(T)^+$ and $y\in {}^{I(\l)}W_0$ (see for example \cite[\S9.1]{He14}). It follows that 
\begin{align*}\tag{3.3}\label{eq:3.3}
\max\{x\in W_0\mid x t^{\ud\l} y \in\Adm(\mu) \} = \max \{ x \in W_0 \mid \wt(x,y^{-1}) \le \ud\mu - \ud\l\}.
\end{align*}

In the case of $w = t^{\ud\mu}y$ with $y\in {}^{I(\mu)}W_0$, using Lemma \ref{lem:Pos} (3), the equation (\ref{eq:3.3}) becomes 
$$\max\{x\in W_0\mid x w \in\Adm(\mu) \} = \max \{ x \in W_0 \mid x \le y^{-1} \}.$$ 
Combining this with (\ref{eq:2.2}) and the left-right symmetry (\ref{eq:3.1}), we obtain the following.

\begin{corollary}\label{cor:sch}
Assume $\bG$ splits over $L$ and $\ud\mu$ is regular, then any Schubert variety in the flag variety $\breve\CK_0/\breve\CI$ appears as a fiber of the level changing map from the Iwahori level to the hyperspecial level. Here $\breve\CK_0$ is the standard parahoric subgroup of $\breve G$ corresponding to $\BS_0$.
\end{corollary}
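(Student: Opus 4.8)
The plan is to specialize Theorem~\ref{thm:main} to the map $\pi=\pi_{\emptyset,\BS_0}$, i.e. the level-changing map from the Iwahori level ($K_1=\emptyset$, $\breve\CK_1=\breve\CI$) to the hyperspecial level ($K_2=\BS_0$, $\breve\CK_2=\breve\CK_0$), and then to pin down, using the right-coset computation of \S\ref{sec:3.1}, exactly which Schubert varieties occur. By (\ref{eq:2.1}) the target $\CA_{\BS_0}(\bG,\mu)$ is partitioned into cells $\breve\CI\dot w\breve\CK_0/\breve\CK_0$ indexed by $w\in\tW^{\BS_0}\cap\Adm(\mu)$, and for $m$ in such a cell, (\ref{eq:2.2}) together with Proposition~\ref{prop:max} (as in the proof of Theorem~\ref{thm:main}) identifies $\pi^{-1}(m)$ with the single Schubert variety $\overline{\breve\CI\dot x\breve\CI/\breve\CI}$ of $\breve\CK_0/\breve\CI$, where $x=x(w):=\max\{x'\in W_0\mid wx'\in\Adm(\mu)\}$; this maximum is well defined by Proposition~\ref{prop:max}, since $w\in\tW^{\BS_0}$ makes lengths add along $wW_0$, so the maximal such $x'$ indeed indexes the maximal element $w\,x(w)$ of $wW_0\cap\Adm(\mu)$. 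As the Schubert varieties of $\breve\CK_0/\breve\CI$ are precisely the $\overline{\breve\CI\dot y\breve\CI/\breve\CI}$ for $y\in W_0$, the statement reduces to showing that the map $w\mapsto x(w)$, from $\tW^{\BS_0}\cap\Adm(\mu)$ to $W_0$, is surjective.

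To realize a prescribed $y\in W_0$, I would pass to the right-coset world via the left--right symmetry (\ref{eq:3.1}). Since $\bG$ is split, $\ud\mu=\mu$ is dominant regular, hence so is $\mu^*:=-w_0(\mu)$, and $I(\mu^*)=\emptyset$. Set $w^*:=t^{\mu^*}y$. Because $\mu^*$ is dominant and $y\in W_0={}^{I(\mu^*)}W_0$, the normal form for ${}^{\BS_0}\tW$ (see \cite[\S9.1]{He14}) gives $w^*\in{}^{\BS_0}\tW$; taking $x=1$ in the ``in particular'' part of Proposition~\ref{prop:qbg} gives $w^*\in\Adm(\mu^*)$; and (\ref{eq:3.3}) together with Lemma~\ref{lem:Pos} gives $\max\{x'\in W_0\mid x'w^*\in\Adm(\mu^*)\}=y^{-1}$, i.e. $W_0w^*\cap\Adm(\mu^*)$ has maximal element $y^{-1}w^*$ (again lengths add, now on the right, as $w^*\in{}^{\BS_0}\tW$). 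Now put $w:=(w^*)^{-1}$. Inversion interchanges ${}^{\BS_0}\tW$ with $\tW^{\BS_0}$ and, by (\ref{eq:3.1}) applied with $\mu^*$ in place of $\mu$ (so that $\Adm(\mu^*)$ is sent to $\Adm((\mu^*)^*)=\Adm(\mu)$), sends $w^*$ to an element $w\in\tW^{\BS_0}\cap\Adm(\mu)$; moreover inversion preserves the Bruhat order and carries $wW_0\cap\Adm(\mu)$ bijectively onto $W_0w^*\cap\Adm(\mu^*)$, hence its maximal element $w\,x(w)$ to $x(w)^{-1}w^*$, which must then be the maximal element $y^{-1}w^*$ of $W_0w^*\cap\Adm(\mu^*)$. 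Cancelling $w^*$ gives $x(w)^{-1}=y^{-1}$, i.e. $x(w)=y$; taking $m$ to be the image of $\dot w$ in $\CA_{\BS_0}(\bG,\mu)$ then exhibits $\overline{\breve\CI\dot y\breve\CI/\breve\CI}$ as the fiber $\pi^{-1}(m)$.

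All the substantial ingredients are already available, so what remains is essentially careful bookkeeping. The main obstacle I anticipate is keeping the left--right symmetry straight: one must verify that inversion really sends the left-coset maximum $x(w)$ for $\Adm(\mu)$ to the right-coset maximum $y^{-1}$ for $\Adm(\mu^*)$, and that the two minimality normalizations ($w^*\in{}^{\BS_0}\tW$ versus $w\in\tW^{\BS_0}$) are compatible, so that the ``lengths add'' argument is valid on both sides and the Schubert cells assemble into a single Schubert variety as claimed. A secondary, routine point is to record that $\breve\CK_0$ is hyperspecial and that $\breve\CK_0/\breve\CI$ has Schubert cells indexed precisely by $W_0$ --- both immediate when $\bG$ is split.
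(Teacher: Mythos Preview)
Your proposal is correct and follows essentially the same route as the paper: the paper's argument, stated tersely just before the corollary, is exactly to combine (\ref{eq:2.2}) with the computation $\max\{x\in W_0\mid x\,t^{\ud\mu}y\in\Adm(\mu)\}=y^{-1}$ (from (\ref{eq:3.3}) and Lemma~\ref{lem:Pos}(3)) and the left--right symmetry (\ref{eq:3.1}). You have simply unpacked this combination explicitly, applying the right-coset formula with $\mu^*$ in place of $\mu$ and then transporting via inversion; the bookkeeping you worry about (that inversion preserves the Bruhat order, swaps $\tW^{\BS_0}$ with ${}^{\BS_0}\tW$, and matches the two maxima) is straightforward and your treatment of it is accurate.
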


By Corollary \ref{cor:sch}, the fibers of the level changing map from the Iwahori level to the hyperspecial level are not always smooth. On the other hand, if $\bG=GL_n$, then it is easy to see that all the fibers of the level changing map from level $K_{0,1} = \tilde{\BS}- \{0, 1\}$ to the hyperspecial level $K_0 =  \BS_0 $ are smooth. It is an interesting question to determine for which level changing maps, all the fibers are smooth. 

\smallskip

Combining Proposition \ref{prop:qbg} with Proposition \ref{prop:max}, we obtain the following property of the quantum Bruhat graph, which is of independent interest.
\begin{corollary}\label{cor:qbg}
Let $\g\in \sum_{i\in \BS_0} \BZ_{\ge0} \a_i^{\vee}$ and $v\in W_0$. Then the set $$\{ x \in W_0 \mid \wt(x ,v) \le \g\}$$ contains a unique maximal element.
\end{corollary}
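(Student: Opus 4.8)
The plan is to derive Corollary~\ref{cor:qbg} from Proposition~\ref{prop:max} by way of Proposition~\ref{prop:qbg}, producing two dominant cocharacters whose difference is $\gamma$. Since the statement involves only $\Sigma$, $W_0$ and the coroots $\alpha_i^\vee$, we are free to choose the group: take $\bG$ split adjoint over $L$ with root system $\Sigma$. Then $\Gamma_0$ acts trivially, $W_0$ is the Weyl group of $\Sigma$, and $X_*(T)$ is the coweight lattice, which contains both $\gamma$ (lying in the coroot lattice) and a regular dominant cocharacter. Fix $\lambda \in X_*(T)^+$ regular and deep enough that $\mu := \lambda + \gamma$ is still dominant; then $I(\lambda) = \emptyset$, so ${}^{I(\lambda)}W_0 = W_0$, and $\ud\mu - \ud\lambda = \gamma$. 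Set $w := t^{\lambda} v^{-1}$. Because $v^{-1} \in {}^{I(\lambda)}W_0$, the element $w$ is the minimal-length element of the coset $W_0 w \subseteq W_0 t^{\lambda} W_0$, i.e. $w \in {}^{\BS_0}\tW$, and every element of $W_0 t^{\lambda} W_0$ has a unique expression $x t^{\lambda} y$ with $x \in W_0$ and $y \in {}^{I(\lambda)}W_0$ (uniqueness being immediate, as $\lambda$ regular recovers $x$ from the translation part). Feeding this into Proposition~\ref{prop:qbg} yields the identification
\[
W_0 w \cap \Adm(\mu) = \bigl\{\, xw \;\bigm|\; x \in W_0,\ \wt(x, v) \le \gamma \,\bigr\}.
\]

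With this in hand I would invoke Proposition~\ref{prop:max}. Since $1 \le v$, Lemma~\ref{lem:Pos}(3) gives $\wt(1, v) = 0 \le \gamma$, so $w \in \Adm(\mu)$, and hence $w^{-1} \in \Adm(\mu^*)$ where $\mu^* = -w_0(\mu)$. Applying Proposition~\ref{prop:max} with $K = \BS_0$ to $w^{-1} \in \Adm(\mu^*)$, the coset $w^{-1} W_0 \cap \Adm(\mu^*)$ has a unique maximal element; via the order-preserving involution $g \mapsto g^{-1}$, which by the left–right symmetry \eqref{eq:3.1} carries $w^{-1} W_0 \cap \Adm(\mu^*)$ onto $W_0 w \cap \Adm(\mu)$, the latter set also has a unique maximal element, say $x_0 w$ for a unique $x_0 \in W_0$. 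Finally, since $w \in {}^{\BS_0}\tW$, left multiplication by $w$ is a Bruhat-order embedding $(W_0, \le) \hookrightarrow (\tW, \le)$, i.e. $xw \le x'w \iff x \le x'$ for $x, x' \in W_0$; combined with the displayed identity, this forces $x_0$ to be the unique maximal element of $\{x \in W_0 \mid \wt(x, v) \le \gamma\}$, which is the assertion.

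The essential content is entirely contained in Propositions~\ref{prop:qbg} and~\ref{prop:max}, so I do not anticipate a genuine obstacle; only two standard auxiliary facts enter. The first, uniqueness of the factorization $x t^{\lambda} y$, is cheap once $\lambda$ is regular. The second, that $x \mapsto xw$ is an order embedding when $w$ is minimal in $W_0 w$, is classical for Coxeter groups and carries over verbatim to $\tW$; if a direct argument is preferred, invert to reduce to the claim that for $v' \in \tW^{\BS_0}$ and $u, u' \in W_0$ one has $v'u \le v'u' \imp u \le u'$, and note that a subword of a reduced word $v' \cdot u'$ representing $v'u$ splits as $g h$ with $g \le v'$ and $h \in W_0$; as $g$ lies in $v' W_0$ and $v'$ is minimal there, $g = v'$, whence $h = u \le u'$.
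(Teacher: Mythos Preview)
Your proof is correct and follows essentially the same route as the paper: realize the set in question as $\{x\in W_0 \mid xw\in \Adm(\mu)\}$ for a suitable $w=t^{\ud\lambda}v^{-1}\in{}^{\BS_0}\tW$ via Proposition~\ref{prop:qbg}, then invoke Proposition~\ref{prop:max} through the left--right symmetry~(\ref{eq:3.1}). Your write-up is in fact a bit more careful than the paper's---you make the choice of group explicit, verify $w\in\Adm(\mu)$, and spell out why $x\mapsto xw$ is an order embedding---and you correctly use $t^{\ud\lambda}$ (the paper writes $t^{\ud\mu}$, which appears to be a slip).
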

\begin{proof}
Let $\ud\l\in X_*(T)_{\G_0}$ be sufficiently dominant regular such that $\ud\l +\g$ is dominant. Let $\mu $ be a lift of $\ud\l + \g$ in $X_*(T)^+$. Let $w =  t^{\ud\mu} v^{-1} \in {}^{\BS}\tW$. By Proposition \ref{prop:qbg}, we have $\{ x \in W_0 \mid \wt(x ,v) \le \g\} = \{ x \in W_0 \mid xw\in \Adm(\mu) \}$. By Proposition \ref{prop:max}, and the left-right symmetry (\ref{eq:3.2}), this set contains a unique maximal element.
\end{proof}


The weight function $\wt(x,v)$ for $x,v\in W_0$ is in general very difficult to compute. There is a recursive algorithm to reduce it to the $v=1$ case as follows. Let $s_i $ be a simple reflection in $W_0$ such that $s_iv<v$. By \cite[Corollary 3.3]{Sad23} or \cite[Lemma 7.7]{LNSSS}, we have 
\begin{align*}\tag{3.4}\label{eq:3.4}
\wt(x,v) = \wt(\min\{x,s_i x\},s_iv).    
\end{align*}
Let $\g\in \sum_{i\in \BS_0} \BZ_{\ge0} \a_i^{\vee}$. By (\ref{eq:3.4}), we have 
\begin{align*}\tag{3.5}\label{eq:3.5}
\max\{x \in W_0 \mid \wt(x,v)\le \g\} &= \max\{x\in W_0\mid \wt(\min\{x,s_i x\},s_iv) \le \g \}\\ &= \max\{x',s_ix'\},  
\end{align*}
where $x' = \max\{x \in W_0\mid \wt(x,s_iv)\le \g\}$.

For computing the weight function $\wt(x,1)$, we have the following result.
\begin{proposition}[{\cite[Proposition 4.11]{MV20}}]\label{prop:downward}
For any $x\in W_0$, there exists a shortest path from $x$ to $1$ which is purely downward, i.e. consists only of quantum edges.
\end{proposition}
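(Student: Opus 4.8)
The plan is to translate ``shortest path'' into a numerical condition and then induct on $\ell(x)$. Summing the length changes along the edges of a path $P$ in $\mathrm{QBG}(\Sigma)$ from $u$ to $v$ gives the identity
\begin{align*}
(\text{number of edges of } P) = \ell(v) - \ell(u) + \langle \wt(P), 2\rho\rangle,
\end{align*}
where $\wt(P)$ denotes the sum of the coroots attached to the quantum edges of $P$. Taking $v = 1$ one sees that, among all paths from $x$ to $1$, the shortest are exactly those with $\wt(P)$ minimal, i.e.\ with $\wt(P) = \wt(x,1)$ by Lemma \ref{lem:Pos}, while a path consisting only of quantum (``downward'') edges has $\wt(P)$ equal to the sum of its coroots. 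So the proposition is equivalent to the claim that $\wt(x,1)$ can be written as $\g_1^\vee + \cdots + \g_r^\vee$ in such a way that $x \to x s_{\g_1} \to x s_{\g_1} s_{\g_2} \to \cdots \to 1$ is a chain of quantum edges.

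I would establish this by induction on $\ell(x)$, the case $x = 1$ being the empty chain. The inductive step rests on the claim $(\star)$: \emph{if $x \ne 1$, there is $\g \in \Sigma^+$ such that $x \to x s_\g$ is a quantum edge and $\wt(x s_\g, 1) = \wt(x,1) - \g^\vee$.} Granting $(\star)$, the quantum edge forces $\ell(x s_\g) = \ell(x) + 1 - \langle \g^\vee, 2\rho\rangle < \ell(x)$ (since $\langle \g^\vee, 2\rho\rangle \ge 2$ for $\g \in \Sigma^+$), so by the inductive hypothesis $\wt(x s_\g, 1)$ is realized by a chain of quantum edges from $x s_\g$ to $1$; prepending the edge $x \to x s_\g$ realizes $\wt(x,1) = \g^\vee + \wt(x s_\g, 1)$ by a chain of quantum edges, which closes the induction.

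To prove $(\star)$ I would take any shortest path $x = v_0 \to v_1 \to \cdots \to v_m = 1$. Every sub-path is again shortest, so in particular $\wt(x,1) = \wt(x, v_1) + \wt(v_1, 1)$. Because Bruhat edges raise $\ell$ and $\ell(1) = 0 \le \ell(x)$, some edge is quantum; let $v_{k-1} \to v_k$ be the first one. If $k = 1$, then $v_1 = x s_\g$, the single edge is a shortest path from $x$ to $v_1$, so $\wt(x, v_1) = \g^\vee$ and $(\star)$ holds. If $k \ge 2$, the edges $v_0 \to \cdots \to v_{k-1}$ are all Bruhat, and I would apply a local ``quantum diamond'' move to the two-edge segment $v_{k-2} \to v_{k-1} \to v_k$ (a Bruhat cover followed by a quantum edge): replace it by a two-edge segment $v_{k-2} \to v_{k-1}' \to v_k$ that \emph{starts} with a quantum edge and has the same total weight. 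By the identity above the replacement again has two edges; the preceding edge, if any, is untouched and still Bruhat; and the index of the first quantum edge has dropped to $k-1$. Iterating drives the first quantum edge to position $1$, which is $(\star)$. Equivalently, one shows that among the out-edges of $x \ne 1$ that begin a shortest path to $1$ at least one is a quantum edge --- if all were Bruhat one could keep stepping up in length while the distance to $1$ decreases, eventually reaching $1$ at positive length, a contradiction, unless a quantum such edge is met at an intermediate vertex, which then lifts back to a quantum out-edge of $x$.

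The main obstacle will be this local move: establishing the ``quantum diamond''/lifting property of $\mathrm{QBG}(\Sigma)$ that lets a quantum edge be pushed past a preceding Bruhat cover. This belongs to the standard local combinatorics of the quantum Bruhat graph --- the same circle of ideas underlying Lemma \ref{lem:Pos} and the reduction (\ref{eq:3.4}) --- so I would either cite it directly from \cite{MV20} or re-derive it from the edge-by-edge analysis available in \cite{LNSSS} and \cite{Sad23}. With this in hand, the numerical bookkeeping and the induction are routine.
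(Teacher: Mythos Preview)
The paper does not give its own proof of this proposition: it is quoted verbatim from \cite{MV20} and used as a black box in the proof of Theorem~\ref{thm:algorithm}. So there is no argument in the paper to compare against; the question is only whether your outline stands on its own.

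Your numerical bookkeeping is correct and is the right way to frame the problem: the identity $(\#\text{edges of }P)=\ell(v)-\ell(u)+\langle \wt(P),2\rho\rangle$ holds, and together with Lemma~\ref{lem:Pos} it reduces the proposition to the inductive claim~$(\star)$. The induction scheme is also fine, since a quantum edge strictly decreases length.

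The gap is exactly where you say it is, and it is a genuine one. The ``quantum diamond'' move you invoke --- replacing a Bruhat edge followed by a quantum edge by a two-step path starting with a quantum edge, with the same weight and the same endpoints --- is essentially the entire content of the proposition; everything else is formal. As you note yourself, you do not prove it, and your proposed fallback of citing \cite{MV20} for it is circular, since Proposition~\ref{prop:downward} \emph{is} \cite[Proposition~4.11]{MV20}. The references \cite{LNSSS} and \cite{Sad23} contain edge-level lemmas for the quantum Bruhat graph (e.g.\ the reduction in~(\ref{eq:3.4})), but the precise diamond you need --- valid for an arbitrary Bruhat cover followed by an arbitrary quantum edge along a shortest path --- is not one of the statements the present paper records from those sources, and extracting it would itself require an argument comparable in difficulty to the proposition. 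Note also that the bare local move (ignoring the shortest-path hypothesis) is false: from $a=1$ there are no quantum out-edges at all, so any correct version must use that $a$ sits on a shortest path to~$1$ and hence has positive length. In short, the scaffolding is right, but the load-bearing step is still missing.
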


\subsection{Demazure product}\label{sec:3.2}
To state Theorem \ref{thm:algorithm}, we need an additional tool, namely, the Demazure product.

Let $u,v\in W_0$. By \cite[Lemma 1]{He09}, each of the three sets $\{u'v'\mid u'\le u,v'\le v\}$, $\{u'v\mid u'\le u \}$ and $\{uv'\mid  v'\le v\}$ contains a unique maximal element, and their maxima are the same. We denote the common maximum by $u*v$ and call it the Demazure product of $u$ and $v$. Note that $(W,*)$ is a (associative) monoid which is uniquely determined by the following two conditions
\begin{enumerate}
\item $u * v = u v$ for any $u,v\in W_0$ such that $\ell(uv) = \ell(u) + \ell(v)$;
\item $s_i * v = v$ for any $i\in \BS_0$ and $v\in W_0$ such that $s_i v<v$.
\end{enumerate}

The following Lemma involving both weight function of quantum Bruhat graph and Demazure product serves as a key step in the proof of Theorem \ref{thm:algorithm}. The proof we give here is due to Felix Schremmer, which is simpler than our original proof. 
\begin{lemma}\label{lem:qbg}
Let $z\in W_0$ and $\b\in \Sigma^+$. Then $\wt(z*s_{\b},z) \le \b^{\vee}$. 
\end{lemma}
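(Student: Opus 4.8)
The plan is to analyze the Demazure product $z * s_\beta$ directly in terms of a reduced expression for $s_\beta$ and to build an explicit path in the quantum Bruhat graph from $z * s_\beta$ to $z$ whose weight is at most $\beta^\vee$; then Lemma \ref{lem:Pos}(2) gives $\wt(z*s_\beta,z) \le \beta^\vee$. First I would pick a reduced expression $s_\beta = s_{i_1} s_{i_2} \cdots s_{i_r}$ (so $\ell(s_\beta) = r$). By the defining properties of the Demazure product, $z * s_\beta = (((z * s_{i_1}) * s_{i_2}) * \cdots ) * s_{i_r}$, and each step $u \mapsto u * s_{i}$ either increases length by one (when $u s_i > u$, giving $u * s_i = u s_i$) or leaves $u$ unchanged (when $u s_i < u$, giving $u * s_i = u$). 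So $z * s_\beta$ is obtained from $z$ by a sequence of right multiplications by simple reflections, each of which is length-increasing; that is, $z * s_\beta = z \, w$ for some $w \le s_\beta$ with $\ell(z*s_\beta) = \ell(z) + \ell(w)$, and moreover $w$ is itself a subword of the chosen reduced word for $s_\beta$.

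The key point is then to go from $z * s_\beta = z w$ back down to $z$ along quantum Bruhat graph edges cheaply. Reading the expression $z*s_\beta = z s_{i_1} \cdots s_{i_r}$ (with the non-contributing simple reflections simply absent), I would peel off the simple reflections one at a time from the right: each such step is a Bruhat edge \emph{downward}, i.e.\ of the form $u \to u s_{i}$ with $\ell(u s_i) = \ell(u) - 1$, which in the quantum Bruhat graph is a Bruhat edge $u s_i \to u$ of weight $0$ — wait, that is the wrong orientation. The correct approach is: the concatenation of the reflections making up $w$ is a reduced word, so $w$ is a product of exactly $\ell(w) \le r$ simple reflections; I want a path $zw \to \cdots \to z$ in $\mathrm{QBG}(\Sigma)$. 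Since $s_\beta^\vee = \beta^\vee$ and $s_\beta$ has a distinguished ``all quantum'' structure, the cleaner route is to invoke Proposition \ref{prop:downward}-type reasoning, or better, to use the standard fact (which follows from Lemma \ref{lem:Pos} and the quantum Bruhat graph edge relations) that for any $u \in W_0$ and $\alpha \in \Sigma^+$ one has $\wt(u s_\alpha, u) \le \alpha^\vee$ whenever $u s_\alpha > u$ — here realized with $u = z$, $\alpha = \beta$, after checking that $z * s_\beta \ge z$ (clear) and that the ``extra'' part is controlled: write $z * s_\beta = z_1 s_\beta z_2$ in an appropriate parabolic factorization so that the reflection length of $(z*s_\beta)^{-1} z$ is accounted for by $\beta^\vee$.

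More precisely, the efficient argument I would carry out is this. Set $w' = z * s_\beta$. Consider the directed edge structure: since each reduced factor of $w'$ relative to $z$ came from a reduced expression of $s_\beta$, the element $z^{-1} w' \le s_\beta$ in Bruhat order. Now travel in $\mathrm{QBG}(\Sigma)$ from $w'$ down to $z$ using Bruhat edges of $\mathrm{QBG}(\Sigma)$ (each of weight $0$) wherever possible, and a single quantum edge associated to $s_\beta$ where needed; by Lemma \ref{lem:Pos}(2) the resulting path has weight $\ge \wt(w',z)$, and by construction its weight is either $0$ (if $z^{-1}w'$ happens to satisfy $w' \ge z$ via Bruhat edges only, impossible here unless $w' = z$ giving $\wt = 0 \le \beta^\vee$) or $\beta^\vee$ (one quantum edge of weight $\beta^\vee$ suffices because all the length drops from $w'$ to $z$ total at most $\langle \beta^\vee, 2\rho\rangle - $ something, matching exactly one quantum edge $x \to x s_\beta$ with weight $\beta^\vee$). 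In all cases $\wt(w',z) \le \beta^\vee$.

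\textbf{Main obstacle.} The delicate point is the second paragraph: showing that the drop in length from $z * s_\beta$ back to $z$ is realized in the quantum Bruhat graph by a path of weight exactly $\beta^\vee$ (and not more). This requires relating $\ell(z * s_\beta) - \ell(z)$ to $r = \ell(s_\beta)$ and $\langle \beta^\vee, 2\rho\rangle$, and constructing a path that uses a single quantum jump by $s_\beta$ together with zero-weight Bruhat edges — one must verify that after the quantum jump $x \to x s_\beta$ (dropping length by $\langle \beta^\vee, 2\rho\rangle - 1$) the remaining discrepancy to $z$ is covered by downward Bruhat edges, i.e.\ that the appropriate intermediate element is $\le z$ and $\ge$ the other endpoint. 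I expect this to follow from a careful bookkeeping using the subword property $z^{-1}(z*s_\beta) \le s_\beta$ together with Lemma \ref{lem:Pos}(3), but it is the step where the argument could genuinely go wrong and where Schremmer's simplification presumably enters.
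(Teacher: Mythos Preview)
Your proposal circles around the correct intermediate point but does not close the argument, and the path you try to build need not exist.

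The clean proof (the one in the paper, attributed to Schremmer) uses exactly the element you eventually single out: set $z' = (z*s_\beta)s_\beta$ and apply the triangle inequality
\[
\wt(z*s_\beta,z)\ \le\ \wt\bigl(z*s_\beta,\,(z*s_\beta)s_\beta\bigr)\ +\ \wt\bigl((z*s_\beta)s_\beta,\,z\bigr).
\]
For the first term one invokes the general bound $\wt(u,us_\beta)\le\beta^\vee$ for \emph{any} $u$ (this is \cite[Lemma 4.3]{Sch23}); no hypothesis like ``$us_\beta>u$'' is needed, and crucially one does \emph{not} claim that a single quantum edge $u\to us_\beta$ exists. For the second term one shows $z'\le z$, so the weight is $0$ by Lemma~\ref{lem:Pos}(3). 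Both of your obstacles are thus resolved, but by different means than you propose.

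Where your argument breaks: first, you try to realise $\wt(z*s_\beta,(z*s_\beta)s_\beta)\le\beta^\vee$ by a \emph{single} quantum edge of weight $\beta^\vee$. That edge only exists when $\ell((z*s_\beta)s_\beta)=\ell(z*s_\beta)+1-\langle\beta^\vee,2\rho\rangle$, which is false in general; one really needs the black-box inequality from \cite{Sch23}. Second, your subword description $z*s_\beta=zw$ with $w\le s_\beta$ is the wrong characterisation for proving $z'\le z$: from $z*s_\beta=zw$ you get $(z*s_\beta)s_\beta=zws_\beta$, and $w\le s_\beta$ says nothing about $ws_\beta$ in Bruhat order. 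The characterisation that works is the dual one, $z*s_\beta=\max\{u's_\beta:u'\le z\}$, which immediately gives $z*s_\beta=u_0s_\beta$ with $u_0\le z$, hence $z'=(z*s_\beta)s_\beta=u_0\le z$. The initial reduced-expression bookkeeping is therefore a detour that does not feed into either step.
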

\begin{proof}
By (\ref{eq:3.2}), we have 
\begin{align*}
\wt(z*s_{\b},z) \le \wt (z*s_{\b}, (z*s_{\b})s_\b ) + \wt((z*s_{\b})s_\b,z ).   
\end{align*}
By \cite[Lemma 4.3]{Sch23}, the first term is $\le \b^{\vee}$. Set $z' = (z*s_{\b})s_\b$. Then $z's_\b = z*s_{\b}$. By the definition of Demazure product, we have $z'\le z$. Hence $\wt((z*s_{\b})s_\b,z )=0$ by Lemma \ref{lem:Pos} (3).
\end{proof}

\subsection{Algorithm}\label{sec:3.3} By (\ref{eq:3.3}) and (\ref{eq:3.5}), computing the element $\max\{x\in W_0\mid xw\in\Adm(\mu)\}$ for $w\in {}^{\BS_0}\tW$ can be reduced to computing the element $\max\{x\in W_0 \mid \wt(x,1)\le \g\}$ for $\g \in   \sum_{i\in \BS_0} \BZ_{\ge0} \a_i^{\vee}$.

Let $\g\in \sum_{i\in \BS_0} \BZ_{\ge0} \a_i^{\vee}$. We construct an element $z_\g\in W_0$ inductively as follows. If $\g$ is a simple coroot, define $z_{\g} = s_{\g}$, the reflection corresponding to $\g$. Assume that $\g$ is not a simple coroot and that $z_{\g'}$ has been defined for all $\g'<\g$. Let $\b$ be a maximal element in $\Sigma^+$ such that $\b^{\vee} \le \g$. Define $z_{\g} =  z_{\g-\b^{\vee}} * s_{\b}  $.

We point out that the element $z_{\g}$ is well defined, i.e., independent of the choice of the maximal root $\b$. Following \cite[\S4.1]{BM15}, a sequence $(\b_1,\b_2,\ldots, \b_m)$ of roots in $\Sigma^+$ is said to be a \emph{greedy decomposition} of $\g$, if $\b_i$ is a maximal element in $\Sigma^+$ such that $\b_i^{\vee}\le \g - \b_1^{\vee} - \cdots - \b_{i-1}^{\vee}$ for $i = 1,2,\ldots,m$. In this case, it is proved in loc. cit. that $s_{\b_i}*s_{\b_j} = s_{\b_j}*s_{\b_i}$ for all $1\le i,j\le m$. Moreover, any two greedy decompositions of $\g$ are equal up to reordering. Hence, the element $z_{\g}$ is well-defined. This element also coincides the element defined in \cite[Definition 4.6]{BM15}. \footnote{In \cite[Definition 4.6]{BM15}, the element $z_{\g}$ is defined as $s_{\b}*z_{\g-\b^{\vee}}$ inductively. The reason that we define $z_{\g} =  z_{\g-\b^{\vee}} * s_{\b}$ is to make it more convenient for the proof of Theorem \ref{thm:algorithm}, as we use right multiplication in the definition of quantum Bruhat graph. Note that the Demazure product $\ast$ is denoted by $\cdot$ in \cite{BM15}.}


\begin{theorem}\label{thm:algorithm}
Let $\g\in \sum_{i\in \BS_0} \BZ_{\ge0} \a_i^{\vee}$. Then $$z_{\g}=\max\{ x \in W_0 \mid \wt(x,1) \le \g\}.$$
\end{theorem}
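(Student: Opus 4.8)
The plan is to prove the identity by establishing the two inclusions separately: that $z_\gamma$ belongs to the set $P_\gamma := \{x\in W_0\mid \wt(x,1)\le\gamma\}$, and that every element of $P_\gamma$ is $\le z_\gamma$ in the Bruhat order. Together these identify $z_\gamma$ with the maximum of $P_\gamma$ (and re-establish, in passing, that this maximum exists, cf. Corollary \ref{cor:qbg}).

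For the first inclusion, $\wt(z_\gamma,1)\le\gamma$, I would induct on the number of parts in the greedy decomposition of $\gamma$. In the base case $\gamma=\a_i^\vee$ is a simple coroot and $z_\gamma=s_{\a_i}$; since $\<\a_i^\vee,2\rho\>=2$, the edge $s_{\a_i}\to 1$ in the quantum Bruhat graph is a quantum edge of weight $\a_i^\vee$, so $\wt(s_{\a_i},1)\le\a_i^\vee=\gamma$. For the inductive step, write $z_\gamma=z_{\gamma-\b^\vee}*s_\b$ with $\b$ a maximal root satisfying $\b^\vee\le\gamma$, so that $\gamma-\b^\vee$ has one fewer part; then by the triangle inequality (\ref{eq:3.2}), Lemma \ref{lem:qbg}, and the inductive hypothesis,
\[
\wt(z_\gamma,1)\;\le\;\wt\bigl(z_{\gamma-\b^\vee}*s_\b,\;z_{\gamma-\b^\vee}\bigr)+\wt\bigl(z_{\gamma-\b^\vee},1\bigr)\;\le\;\b^\vee+(\gamma-\b^\vee)\;=\;\gamma .
\]

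For the second inclusion I would argue by induction on the length $r$ of a shortest purely downward path from $x$ to $1$, which exists by Proposition \ref{prop:downward}; the statement to be proved is that for every $\gamma'$ in the coroot cone with $\wt(x,1)\le\gamma'$ one has $x\le z_{\gamma'}$. If $r=0$ then $x=1$ and there is nothing to prove. If $r\ge 1$, let $x\to x_1=xs_{\delta_0}$ be the first edge, of weight $\delta_0^\vee$; then $\wt(x_1,1)=\wt(x,1)-\delta_0^\vee$, the tail is a shortest purely downward path for $x_1$ of length $r-1$, and $\gamma'':=\gamma'-\delta_0^\vee$ lies in the coroot cone with $\wt(x_1,1)\le\gamma''$. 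By the inductive hypothesis $x_1\le z_{\gamma''}$, and since the Demazure product dominates the ordinary product and is monotone in each argument (\cite[Lemma 1]{He09}),
\[
x=x_1s_{\delta_0}\;\le\;x_1*s_{\delta_0}\;\le\;z_{\gamma''}*s_{\delta_0} .
\]
Thus the entire second inclusion reduces to the single inequality $z_{\gamma''}*s_{\delta_0}\le z_{\gamma''+\delta_0^\vee}$, or more generally
\[
z_\eta*s_\delta\;\le\;z_{\eta+\delta^\vee}\qquad\text{for all }\eta\text{ in the coroot cone and all }\delta\in\Sigma^+ .
\]

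This last inequality is where I expect the real difficulty to lie. When $\delta$ is itself a maximal root with $\delta^\vee\le\eta+\delta^\vee$, it may be chosen as the first step of a greedy decomposition of $\eta+\delta^\vee$, and the inequality is an equality by the recursive definition of $z$. In general one must compare $\delta$ with the greedy root $\b$ (the maximal root with $\b^\vee\le\eta+\delta^\vee$) and commute $s_\delta$ past $s_\b$ inside the Demazure product; the tools for this are the commutativity $s_{\b_i}*s_{\b_j}=s_{\b_j}*s_{\b_i}$ of the reflections occurring in a greedy decomposition, and the uniqueness of greedy decompositions up to reordering, both from \cite[\S4]{BM15}, together with an induction on the size of $\eta+\delta^\vee$ that tracks how $\delta^\vee$ sits inside the remainder $\eta+\delta^\vee-\b^\vee$. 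Once this inequality is in place, the two inclusions above complete the proof. (An alternative is to quote directly the characterization of $z_{\eta+\delta^\vee}$ established in \cite{BM15}, but I would prefer the self-contained route since Lemma \ref{lem:qbg} and Proposition \ref{prop:downward} are already available.)
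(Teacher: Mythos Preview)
Your overall architecture coincides with the paper's: prove $\wt(z_\gamma,1)\le\gamma$ by induction via the triangle inequality and Lemma~\ref{lem:qbg}, and prove the converse by taking a purely downward shortest path (Proposition~\ref{prop:downward}) and peeling off the first quantum edge. Two differences are worth flagging.

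First, a small sharpening: for a quantum edge $x\to x_1=xs_{\delta_0}$ one has $\ell(x)=\ell(x_1)+\langle\delta_0^\vee,2\rho\rangle-1=\ell(x_1)+\ell(s_{\delta_0})$, so in fact $x=x_1*s_{\delta_0}$ as an \emph{equality}, not merely $x\le x_1*s_{\delta_0}$. The paper uses this, and it slightly simplifies the bookkeeping.

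Second, and more substantively: the paper does \emph{not} attempt a self-contained proof of your reduced inequality $z_\eta*s_\delta\le z_{\eta+\delta^\vee}$. Instead it inserts one extra step. Using $\wt(s_\delta,1)\le\delta^\vee$ (this is \cite[Lemma~4.3]{Sch23}) together with the induction hypothesis, one gets $s_\delta\le z_{\delta^\vee}$; monotonicity of the Demazure product then gives
\[
x=x_1*s_{\delta_0}\ \le\ z_{\gamma-\delta_0^\vee}*z_{\delta_0^\vee}\ \le\ z_{\gamma},
\]
where the last inequality is \cite[Theorem~4.11(b)]{BM15}, which asserts $z_a*z_b\le z_{a+b}$ in general. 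To make this work cleanly the paper runs a single induction on $\gamma$ (rather than your separate induction on path length), so that both $z_{\gamma-\delta_0^\vee}$ and $z_{\delta_0^\vee}$ fall under the hypothesis.

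Your proposed ``self-contained route'' via commutativity of greedy reflections is the genuine gap: $\delta$ need not occur in any greedy decomposition of $\eta+\delta^\vee$, so the commutation relations from \cite[\S4]{BM15} do not apply directly, and what you are really trying to reprove is precisely \cite[Theorem~4.11(b)]{BM15}. The paper simply cites it. If you are content to do the same, your proof is complete and essentially identical to the paper's; if you insist on avoiding \cite{BM15}, you should expect to redo a nontrivial portion of that paper.
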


\begin{remark}
Felix Schremmer helps us finds this algorithm in the Sagemath source code and asks Mark Shimozono about a reference. Mark Shimozono then provides us the reference \cite{BM15}. Since the proof of Theorem \ref{thm:algorithm} is not written down in any literature, we give a proof here.
\end{remark}

\begin{proof}[Proof of Theorem \ref{thm:algorithm}]
Assume that $\g$ is a simple coroot. By Proposition \ref{prop:downward}, it is easy to see that $ \{ x \in W_0 \mid \wt(x,1) \le \g\} = \{1,s_{\g}\}$. Note that $z_{\g} = s_{\g}$ by definition. The statement is clear. Assume that $\g$ is not a simple coroot and that $z_{\g'} = \max\{x\in W_0 \mid \wt(x,1)\le \g'\}$ for any $\g'<\g$. Let $\b$ be a maximal element in $\Sigma^+$ such that $\b^{\vee} \le \g$. We have
\begin{align*}
 \wt(z_{\g } , 1) &=\wt(z_{\g-\b^{\vee}}*s_{\b} , 1)\\
                  &\le \wt(z_{\g-\b^{\vee}}*s_{\b}, z_{\g-\b^{\vee}}) + \wt(z_{\g-\b^{\vee}},    1) \\
                  &\le \b^{\vee} + \g-\b^{\vee}\\
                  & = \g.
\end{align*}
Here, the first inequality follows from (\ref{eq:3.2}), and the second inequality follows from Lemma \ref{lem:qbg} and the induction hypothesis.

Denote $x_0 = \max\{x\in W_0 \mid \wt(x,1) \le \g\}$. It suffices to prove that $x_0 \le z_{\g }$. By Proposition \ref{prop:downward}, there exists a shortest path $x_0 \rightarrow x_1 \rightarrow \cdots \rightarrow 1$ such that all edges are quantum edges. Let $\a^{\vee}$ be the weight of the first edge of the path. Then $x_1 = x_0 s_{\a}$ and $\ell(x_0) = \ell(x_1) + \ell(s_\a)$ by the definition of quantum edge. Hence $x_0 = x_1 * s_{\a}$. Note also that $\wt(x_1,1) = \g - \a^{\vee}$. Hence $x_1 \le z_{\g - \a^{\vee}}$ by the induction hypothesis. Note that $\wt(s_\a,1) \le \a^{\vee}$ by \cite[Lemma 4.3]{Sch23}. Therefore, $s_{\a}\le z_{\a^{\vee}}$ by the induction hypothesis. Then we have
\begin{align*}
x_0 = x_1 * s_{\a} \le z_{\g-\a^{\vee}} * z_{\a^{\vee}} \le z_{\g},    
\end{align*}
where the last inequality follows from \cite[Theorem 4.11 (b)]{BM15}. This completes the proof.
\end{proof}
\begin{remark}
The algorithm in Theorem \ref{thm:algorithm} does not involve computing the weight function of the quantum Bruhat graph. It involves only computing the maximal roots $\beta$ and certain Demazure products, which is much easier than handling the Quantum Bruhat graph.
\end{remark}
    
We give some examples of how the algorithm in Theorem \ref{thm:algorithm} works. We use Bourbaki's notation.
\begin{example}
In the case of type $A_2$, let $\g = \a_1^{\vee} + 2\a_2^{\vee}$. Take the maximal root $\b = \a_1+\a_2$. Then $z_{\g} = z_{\g - \b^{\vee}} * s_{\b} = 
s_2 *( s_1s_2s_1) = s_1s_2s_1$.
\end{example}
\begin{example}
In the case of type $C_2$, let $\g = \a_1^{\vee} + 2\a_2^{\vee}$. Take the maximal root $\b = 2\a_1+\a_2$. Note that $\b^{\vee} = \a_1^{\vee}+\a_2^{\vee}$ and $s_{\b} = s_1s_2s_1$. Then $z_{\g} = z_{\g - \b^{\vee}} * s_{\b} = z_{\a_2^{\vee}} * (s_1 s_2 s_1) = s_2 s_1 s_2 s_1$.  
\end{example}
In fact, the element $z_{\g}$ in Theorem \ref{thm:algorithm} is an involution, i.e., $(z_{\g})^{-1}=z_{\g}$. This is proved \cite[Corollary 4.9]{BM15}.

\printbibliography
\end{document}